\newcommand{\Xcomment}[1]{}
\newtheorem{theorem}{Theorem}[section]
\newtheorem{lemma}[theorem]{Lemma}
\newtheorem{corollary}[theorem]{Corollary}
\newtheorem{prop}[theorem]{Proposition}
\newenvironment{proof}{\noindent{\bf Proof}\/}%
{\hfill$\qed$\medskip}
\def\qed{\Box}
\makeatletter \@addtoreset{equation}{section} \makeatother
\newenvironment{numitem1}{\refstepcounter{equation}\begin{enumerate}%
\item[(\thesection.\arabic{equation})]}{\end{enumerate}}
\newcommand{\refeq}[1]{(\ref{eq:#1})}  % reference to equation
\renewcommand{\section}{\@startsection{section}{1}{0pt}%
{-3.5ex plus -1ex minus -.2ex}{2.3ex plus .2ex}%
{\normalfont\Large}}
\renewcommand{\subsection}{\@startsection{subsection}{2}{0pt}%
{-3.0ex plus -1ex minus -.2ex}{1.5ex plus .2ex}%
{\normalfont\normalsize\bf}}
 \newcommand{\SEC}[1]{\ref{sec:#1}}  % reference to section
\newcommand{\SSEC}[1]{\ref{ssec:#1}}  % reference to subsection
\newcommand{\SSSEC}[1]{\ref{sssec:#1}}  % reference to subsubsection
\def\Rset{{\mathbb R}}
\def\Zset{{\mathbb Z}}
\def\Ascr{{\cal A}}
\def\Bscr{{\cal B}}
\def\Dscr{{\cal D}}
\def\Escr{{\cal E}}
\def\Gscr{{\cal G}}
\def\Iscr{{\cal I}}
\def\Lscr{{\cal L}}
\def\Pscr{{\cal P}}
\def\Rscr{{\cal R}}
\def\Sscr{{\cal S}}
\def\Tscr{{\cal T}}
\def\Xscr{{\cal X}}
\def\tilde{\widetilde}
\def\hat{\widehat}
\def\eps{\varepsilon}
\def\deltain{\delta^{\rm in}}
\def\deltaout{\delta^{\rm out}}
\def\xmin{x^{\rm min}}
\def\xmax{x^{\rm max}}
\def\bmax{b^{\rm max}}
\def\Imax{I^{\rm max}}
\def\supp{{\rm supp}}
\def\onebf{{\bf 1}}
\def\rest#1{_{\,\vrule height 1.5ex width 0.05em depth 1pt\, #1}}
\begin{document}
\parskip=2pt

\title{On one generalization of stable allocations in a two-sided market}

\author{Alexander V.~Karzanov
\thanks{Central Institute of Economics and Mathematics of
the RAS, 47, Nakhimovskii Prospect, 117418 Moscow, Russia; email:
akarzanov7@gmail.com.}
}
\date{}

 \maketitle
\vspace{-0.7cm}
 \begin{abstract}
In the stable allocation problem on a two-sided market introduced and studied by Baiou and Balinski in the early 2000's, one is given a bipartite graph $G=(V,E)$ with capacities $b$ on the edges (``contracts'') and quotas $q$ on the vertices (``agents''). Each vertex $v\in V$ is endowed with a linear order on the set $E_v$ of edges incident to $v$, which generates preference relations among functions (``contract intensities'') on $E_v$, giving rise to a model of \emph{stable allocations} for $G$. This is a special case of Alkan--Gale's stability model for a bipartite graph with edge capacities in which, instead of linear orders, the preferences of each ``agent'' $v$ are given via a \emph{choice function} that acts on the box $\{z\in\Rset_+^{E_v}\colon z(e)\le b(e),\, e\in E_v\}$ or a closed subset in it and obeys the (well motivated) axioms of \emph{consistence}, \emph{substitutability} and \emph{cardinal monotonicity}. By central results in Alkan--Gale's theory, the set of stable assignments generated by such choice functions is nonempty and forms a distributive lattice.

In this paper, being in frameworks of Alkan--Gale's model and generalizing the stable allocation one, we consider the situation when the preferences of ``agents'' of one side (``workers'') are given via linear orders, whereas the ones of the other side (``firms'') via integer-valued choice functions subject to the three axioms as above, thus introducing the model of \emph{generalized allocations}, or \emph{g-allocations} for short. Then the set $\Sscr$ of stable g-allocations is nonempty and forms a distributive lattice $(\Sscr,\prec)$. Our main aims are to characterize and efficiently construct \emph{rotations}, functions on $E$ associated with immediately preceding relations in $(\Sscr,\prec)$, and to estimate the computational complexity of constructing a poset generated by rotations for which the lattice of closed functions is isomorphic to $(\Sscr,\prec)$, obtaining a ``compact'' representation of the latter.
 \medskip

\noindent\emph{Keywords}: stable marriage, allocation, linear order, choice function, rotation
 \end{abstract}

%----------------------- Sec. 1

\section{Introduction}  \label{sec:intr}

Stable allocations in a two-sided market were introduced by Baiou and Balinski~\cite{BB} as a generalization of the notion of stable marriages from the prominent work by  Gale and Shapley~\cite{GS} and their natural extensions (of ``one-to-many'' and ``many-to-many'' types) widely studied in the literature.

Recall that in a setting of \emph{stable allocation model} (briefly, \emph{SAM}), one is given a bipartite graph $G=(V,E)$ in which the edges $e\in E$ are endowed with nonnegative real upper bounds, or \emph{capacities} $b(e)\in\Rset_+$, and the vertices $v\in V$ with \emph{quotas} $q(v)\in\Rset_+$. The vertices of $G$ are interpreted in applications as ``agents of the market'', and the edges as possible ``contracts'' between them, of which intensities, or allocation values, are restricted by the capacities and quotas. Formally, an \emph{allocation} is a nonnegative function $x:E\to\Rset_+$ obeying the capacity constraints: $x(e)\le b(e)$ for all $e\in E$, and the quota constraints: $x(E_v):=\sum(x(e)\colon e\in E_v)\le q(v)$ for all $v\in V$, where $E_v$ is the set of edges incident to $v$.

Besides, to estimate and compare the quality of allocations, each vertex $v\in V$ has a (strong) \emph{linear order} $>_v$ establishing \emph{preferences} on the edges in $E_v$; namely, $e>_v e'$ means that ``agent'' $v$ prefers ``contract'' $e$ to ``contract'' $e'$. Based on these preference relations, an allocation $x:E\to\Rset_+$ is said to be \emph{stable} if, roughly, there is no edge $e=\{u,v\}$ such that $x(e)$ can be increased, possibly decreasing $x$ on one edge $e'\in E_u$ with $e'<_u e$ and/or one edge $e''\in E_v$ with $e''<_v e$. 

Baiou and Balinski~\cite{BB} proved that a stable allocation always exists and can be found efficiently, by a strongly polynomial algorithm. Also they showed that if the capacities and quotas are integer-valued, then there exists an integral stable allocation, and revealed some other impressive properties. Subsequently, additional important results for SAM have been obtained. In particular, Dean and Munshi~\cite{DM} used a technique of \emph{rotations} to bijectively represent the stable allocations via the  so-called closed functions on a certain weighted poset which can be constructed efficiently. As an application, one can solve, in strongly polynomial time, the problem of minimizing a linear function over the set $\Sscr$ of stable allocations (in other words, finding a stable allocation of minimum cost, given a cost function on the edges).  (Note that originally nice properties of rotations and their applications have been revealed and deeply studied for the stable marriage and stable roommate problems in~\cite{irv,IL,ILG}.)

This paper is devoted to a certain generalization of SAM with integral capacities and quotas. We will rely on results in the fundamental work by Alkan and Gale~\cite{AG} on a general stability problem in two-sided markets.

Recall that they consider a bipartite graph $G=(V,E)$ and a closed subset $\Bscr$ of the box $\{z\in \Rset_+^E\colon z(e)\le b(e),\, e\in E\}$, where  $b\in\Rset_+^E$ (the entire box and its integer sublattice are typical samples of $\Bscr$). Each vertex $v\in V$ is endowed with a continuous \emph{choice function} $C_v$ on the restriction $\Bscr_v:=\Bscr\rest{E_v}$ that maps $\Bscr_v$ into itself so that $C_v(z)\le z$ for all $z\in\Bscr_v$. This choice function (CF) distinguishes the vectors $z\in\Bscr_v$ satisfying $C_v(z)=z$, called \emph{acceptable} ones, and establishes on them the so-called \emph{revealed preference} relation $\prec_v$, defined by $z'\prec_v z\Longleftrightarrow C_v(z\vee z')=z$ (where $z\vee z'$ takes the values $\max\{z(e),z'(e)\}$, $e\in E_v$). An assignment $x\in \Bscr$ for which all restrictions $x_v:=x\rest{E_v}$ ($v\in V$) are acceptable is said to be \emph{stable} if, roughly, there is no edge $e=\{u,v\}$ (``contract between agents $u$ and $v$'') such that one can increase $x$ at $e$ so as to obtain $x'\in\Bscr$ which is better than $x$ for both $u$ and $v$, in the sense that $C_u(x'_u)\succ_u x_u$ and $C_v(x'_v)\succ_v x_v$. 

In a general setting of~\cite{AG}, the CFs $C_f$ ($v\in V$) are assumed to obey the axioms of \emph{consistence} and \emph{substitutability} (also called \emph{persistence}), which earlier have been known and well motivated for simpler models, going back to Kelso and Crawford~\cite{KC}, Roth~\cite{roth}, Blair~\cite{blair}, and some other works. By adding the third axiom, of \emph{size-monotonicity} (going back to Fleiner~\cite{flein} and Alkan~\cite{alk}) or, stronger, of \emph{quota-filling}, Alkan and Gale succeeded to develop a rather powerful and elegant theory embracing a wide scope of stability problems in weighted bipartite graphs. A central result in it, generalizing earlier results for particular cases, is that the set of stable assignments (or stable schedule matchings, in terminology of~\cite{AG}) is nonempty and constitutes a \emph{distributive lattice} generated  by the above-mentioned relations $\prec_v$ for vertices $v$ occurring in one side of $G$. (Precise definitions and a brief account on results from~\cite{AG} needed to us will be given in Sect.~\SEC{defin}.) As a simple illustration of their model, Alkan and Gale point out just the stable allocation problem. We will refer to a general case of~\cite{AG} as the \emph{AG-model}.

(It should be noted, however, that some tempting issues are left beyond discussion in~\cite{AG}, such as: (i) to characterize (and/or explicitly construct) augmenting vectors (like rotations or so) determining ``elementary'' transformations in the lattice $(\Sscr,\prec)$ of stable assignments; or (ii) to represent $(\Sscr,\prec)$ via the lattice of closed (ideal-wise) functions on a poset which can be explicitly constructed or characterized. When $\Sscr$ is finite, such a poset exists by Birkhoff's theorem~\cite{birk} (though to efficiently construct it may take some efforts). For a general domain $\Bscr$ and general CFs $C_v$ in the AG-model, both problems (i) and (ii) look highly sophisticated. In some particular cases, e.g. those occurring in \cite{IL,ILG,BB,BAM,DM,FZ}, both problems are solved efficiently, in strongly polynomial time; here the role of augmenting vectors is played by certain edge-simple cycles, called rotations, which simultaneously serve as the elements of a poset in the representation of $(\Sscr,\prec)$. A different sort of rotations is demonstrated in~\cite{karz1} devoted to a sharper version of SAM where the vertices are endowed with \emph{weak} (rather than strong) linear orders; here both problems are solvable in strongly polynomial time as well, but for integer-valued $b$ and $q$, rotations may be formed by rational vectors having exponentially large numerators and denominators in the size of the graph.)

The stability model of our interest in this paper is a special case of the AG-model. Here, as above, we deal with a bipartite graph $G=(V,E)$ with a (fixed) partition of $V$ into two parts (viz. independent sets of $G$) denoted as $W$ and $F$ and interpreted as the sets of \emph{workers} and \emph{firms}, respectively (like in~\cite{AG}). Also: (a) each edge $e=\{w,f\}$ has an \emph{integer} capacity $b(e)\in\Zset_+$; (b) each $w\in W$ has an integer quota $q(w)\in\Zset_+$; and (c) the set $\Bscr$ of admissible assignments forms the integer lattice $\{x\in \Zset_+^E\colon x\le b\}$.

Besides, each vertex $v\in V$ is endowed with a choice function $C_v$ on $\Bscr_v$. These CFs are arranged differently for $W$ and $F$. Namely, for $w\in W$, $C_w$ is defined as in SAM, i.e. via a prescribed linear order $>_w$ on $E_w$. And for $f\in F$, like in the AG-model, $C_f$ is an arbitrary integer-valued CF on $\Bscr_f$ satisfying the consistence, substitutability and cardinal monotonicity axioms. In the latter case, we assume that $C_f$ is given via an \emph{oracle} that, being asked of $z\in\Bscr_f$, outputs the vector $C_f(z)$.

(In what follows, concerning computational complexity aspects, we will follow standard terminology (see e.g.~\cite{GJ}) and say that an algorithm that we deal with in our case is \emph{pseudo, weakly, strongly} polynomial if its running \emph{time} (viz. the amount of standard logical and arithmetical operations with numbers of binary size $O(\log (|E|\,\bmax))$) is estimated as $O(P\,\bmax)$, $O(P\,\log \bmax)$, and $O(P)$, respectively, where $\bmax:=\max\{b(e)\colon e\in E\}$ and $P$ is a polynomial in $|V|,|E|$. The term \emph{efficient} is applied to weakly and strongly polynomial algorithm. Note that we often do not care of precisely estimating time bounds of the algorithms that we devise and restrict ourselves merely by establishing their pseudo, weakly or strongly polynomial-time complexity.
As to the oracles computing values of CFs, we assume their efficiency. Moreover, we will liberally assume that the time of one application of an oracle is measured as a constant $O(1)$, like in a wide scope of problems where one is interested in the number of ``oracle calls'' rather than the complexity of their implementations.)

We call an acceptable assignment $x\in\Bscr$ a \emph{generalized allocation}, or a \emph{g-allocation} for short, and use abbreviation \emph{SGAM} for our stable g-allocation model. Since it is a special case of the AG-model, the set $\Sscr$ of stable g-allocations is nonempty and forms a distributive lattice under the relation $\prec_F$ defined for $x,y\in\Sscr$ by: $x\prec_F y \Longleftrightarrow x_f\prec_f y_f \;\forall f\in F$. Our study of SGAM utilizes some ideas from~\cite{karz2} on the corresponding boolean variant (viz. the special case of SGAM with all-unit capacities $b\equiv 1$).

Our first aim is to characterize the irreducible augmentations in the lattice $(\Sscr,\prec_F)$, by addressing the following problem: given $x\in\Sscr$, find the set $\Sscr_x$ of stable g-allocations $x'$ \emph{immediately succeeding} $x$ in the lattice, i.e. $x\prec_F x'$, and there is no $y\in\Sscr$ between $x$ and $x'$. We solve it efficiently (with $O(|E|)$ oracle calls) by constructing a set $\Lscr(x)$ of \emph{edge-simple} cycles in $G$ where each cycle $L$ is associated with a $0,\pm 1$ function $\chi^L$ on $E$ and determines the g-allocation $x':=x+\chi^L$ in $\Sscr_x$. This gives a bijection between $\Lscr(x)$ and $\Sscr_x$. The cycles in $\Lscr(x)$ are just what we call the \emph{rotations} applicable to $x$.

For each rotation $L\in\Lscr(x)$, we denote by $\tau_L(x)$ the maximal integer weight such that $x+\lambda\chi^L$ is a stable g-allocation for $\lambda=1,2,\ldots,\tau_L(x)$. Rotations and their maximal weights play an important role in an explicit representation of the lattice $(\Sscr,\prec_F)$. Here we follow an approach originally appeared in Irving and Leather~\cite{IL} (for stable marriages) and subsequently developed in some other researches (e.g. in Bansal et al.~\cite{BAM} for a ``many-to-many'' model, and in Dean and Munshi~\cite{DM} for SAM).

More precisely, we construct a sequence $x^1,\ldots,x^N$ of g-allocations along with rotations $L^1,\ldots,L^{N-1}$ and weights $\tau^1,\ldots,\tau^{N-1}\in\Zset_{>0}$ such that: $x^1$ and $x^N$ are, respectively, the minimal ($\xmin$) and maximal ($\xmax$) element of $(\Sscr,\prec_F)$; and for $i=1\ldots, N-1$, $x^{i+1}$ is equal to $x^i+\tau^i\, \chi^{L^i}$, where $L^i\in\Lscr(x^i)$ and $\tau^i=\tau_{L^i}(x^i)$. We call such a sequence a \emph{full route} (from $\xmin$ to $\xmax$). One shows that the family of pairs $(L,\tau)$ (respecting multiplicities) does not depend on a full route. Moreover, on the (invariant) family $\Rscr$ of rotation occurrences in a full route we arrange a transitive and antisymmetric binary relation $\lessdot$, obtaining a weighted poset $(\Rscr,\tau,\lessdot)$. One shows that there is a canonical bijection $\omega$ of $\Sscr$ to the set of closed functions on this poset, just giving the desired representation for $(\Sscr,\prec_F)$. 

(A function $\lambda:\Rscr\to\Zset_+$ not exceeding $\tau$ is called \emph{closed} if $L\lessdot L'$ and $\lambda(L')>0$ imply $\lambda(L)=\tau(L)$. A closed $\lambda$ generates the stable g-allocation $x=\omega^{-1}(\lambda)$ by setting $x:=\xmin+\sum(\lambda(L)\chi^{L}\colon L\in\Rscr)$. Here we write $\tau(L)$ for the maximal weight associated with (the corresponding occurrence of) a rotation $L$ in $\Rscr$.)

Note that there is an essential difference between full routes $x^1,\ldots,x^N$ occurring in usual and generalized allocation models. In case of SAM, the behavior is rather simple at two points. First, for each $i$, the weight $\tau^i$ of $L^i$ is computed easily, to be the minimum among the residual capacities $b(e)-x^i(e)$ when $\chi^{L^i}(e)=1$, and the values $x^i(e)$ when $\chi^{L^i}(e)=-1$. Second, all rotations $L^1,\ldots,L^{N-1}$ are different simple cycles and $N<|E|$. This implies that the elements of the representing poset can be identified with the rotations and the poset has size $O(|E|)$. Moreover, the rotations and their poset can be constructed in strongly polynomial time (an algorithm in~\cite{DM} has running time $O(|E|^2\log |V|)$).

In case of SGAM, the behavior is more intricate. First, given a rotation $L$ applicable to a stable $x$, in order to compute the maximal weight $\tau_L(x)$ efficiently, we are forced to use a sort of ``divide and conquer'' technique, which takes $O(|V| \log \bmax)$ oracle calls, resulting in a weakly polynomial procedure. Second, in the process of constructing a full route from $\xmin$ to $\xmax$, one and the same rotation can appear several times (even $O(\bmax)$ times), and as a consequence, the cardinality of the representing poset $\Rscr$ can include a factor of $\bmax$. We can illustrate such a behavior by an example in which the basic graph $G$ has only six vertices, but the size of the poset is of order $\bmax$, which can be arbitrarily large.

To overcome this trouble, we somewhat reduce the variety of possible CFs $C_f$ ($f\in F$) in our model by imposing an additional requirement on $C_f$ that we call the \emph{gapless} condition (a precise definition is given in Sect.~\SEC{addit_prop}). This implies the following property on rotations: if $x,x',x''$ are stable g-allocations such that $x\prec_F x'\prec_F x''$ and if a rotation $L$ is applicable to $x$ and $x''$, then $L$ is applicable to $x'$ as well.
The class of CFs obeying this condition (in addition to the three axioms as above) looks much wider compared with the one generated by linear orders on $E_f$ as in SAM (in particular, it includes all cases when $b(e)\le 2$ for each $e\in E$).

We explain that subject to the gapless condition, all rotations occurring in a full route are different and the number of rotations is estimated as $O(|V| |E|^2)$ (Theorem~\ref{tm:condC}). This implies that the weighted poset $(\Rscr,\tau,\lessdot)$ (where the lattice of closed functions is isomorphic to $(\Sscr,\prec_F)$) has size $|\Rscr|=O(|V| |E|^2)$. We construct this poset in weakly polynomial time (since a factor of $O(\log \bmax)$ arises in the procedure of finding the maximal weight of a rotation).

(Note that the availability of such a poset (which has size polynomial in $|E|$) gives rise to solvability, in strongly polynomial time, of the minimum cost stable g-allocation problem, by reducing it to the usual min-cut problem via Picard's method (like it was originally shown for stable matchings of minimum weight in~\cite{ILG}).)

This paper is organized as follows.
Section~\SEC{defin} contains basic definitions and settings. In Sect.~\SEC{act-rot} we describe the construction of rotations for our model SGAM and explain that they are closely related to irreducible augmentations in the lattice $(\Sscr,\prec_F)$ of stable g-allocations  (in Propositions~\ref{pr:xxp} and~\ref{pr:xpy}). Additional important properties of rotations are demonstrated in Sect.~\SEC{addit_prop}. Here we introduce the gapless condition~(C) on the choice functions $C_f$, $f\in F$, and show polynomial upper bounds on the number of rotations and the length of a full route under this condition (in Theorem~\ref{tm:condC}). Section~\SEC{poset_rot} describes the construction of poset $(\Rscr,\tau,\lessdot)$ induced by rotations and proves that $(\Sscr,\prec_F)$ is isomorphic to the lattice of closed functions on this poset (in Theorem~\ref{tm:omega}). 

Section~\SEC{construct} is devoted to algorithmic aspects, subject to  condition~(C). Here we first show that if the minimal g-allocation $\xmin$ is available, then the poset $(\Rscr,\tau,\lessdot)$ can be constructed in weakly polynomial time. Then we discuss  the problem of finding $\xmin$. Applying a general method from~\cite{AG} to solve such a problem in our case, we could reach merely a pseudo polynomial time bound. To obtain a better result, we develop an alternative approach. It consists of two stages. The first one (in Sect.~\SSSEC{stageI}) constructs a certain, not necessarily minimal, g-allocation $x$, and the second one (in Sect.~\SSSEC{stageII}) transforms $x$, step by step, into the required $\xmin$, by using a machinery of so-called reversed rotations that we elaborate. Both stages are carried out in weakly polynomial time. As a result, the whole task of constructing the poset $(\Rscr,\tau,\lessdot)$ under the gapless condition is solved efficiently (cf. Theorem~\ref{tm:time_poset}). The section finishes with a brief outline of an efficient method of finding a stable g-allocation of minimum cost (in Remark~2). 

The Appendix demonstrates an example of choice function on a 3-element set that obeys the basic axioms but violates the gapless condition. Using this, we compose a series of instances of SGAM with a fixed small graph so that the sizes of representing posets grow pseudo polynomially (estimated as $\Omega(\bmax)$). Finally, in Remark~3, we briefly discuss a method of constructing the representing poset in a general case of SGAM and estimate the complexity of this method.

%----------------------- Sec. 2

\section{Definitions and settings} \label{sec:defin}

In what follows we often (though not always) use terminology and definitions from~\cite{AG}. 

We consider a bipartite graph $G=(V,E)$ in which the vertex set $V$ is
partitioned into two parts (independent sets, color classes) $W$ and $F$,
called the sets of \emph{workers} and \emph{firms}, respectively. The edges
$e\in E$ of $G$ are endowed with nonnegative integer \emph{capacities}
$b(e)\in\Zset_{+}$, and the vertices (``workers'') $w\in W$ with
\emph{quotas} $q(w)\in\Zset_{+}$ (as to the ``firms'' $f\in F$, we do not
impose quotas to them at this moment).

One may assume, without loss of generality, that the graph $G$ is connected and
has no multiple edges. Then $|V|-1\le |E|\le \binom{|V|}{2}$. The
edge connecting vertices $w\in W$ and $f\in F$ may be denoted as $wf$.

For a vertex $v\in V$, the set of its incident edges is denoted by $E_v$. We
write $\Bscr$ for the integer box $\{x\in \Zset_+^E\colon x\le b\}$, and $\Bscr_v$ for its restriction to the set $E_v$, $v\in V$. A function (``assignment'') $x\in\Zset_+^E$ is called \emph{admissible} if it belongs to $\Bscr$ and satisfies the quota constraint $|x_w|=x(E_w)\le q(w)$ for each vertex $w\in W$.

(Hereinafter, for a numerical function $a$ on a finite set $S$, we write $a(S)$
for $\sum(a(e) : e\in S)$, and $|a|$ for $\sum(|a(e)|\colon e\in S)$. So
$a(S)=|a|$ if $a$ is nonnegative.)

Each vertex (``agent'') can prefer one admissible assignments to others. The
\emph{preferences} are arranged in different ways for ``workers'' and
``firms''.
 \smallskip

$\bullet$ (\textbf{linear preferences}) For $w\in W$, the preference relations are given via a \emph{linear order} $>_w$ on $E_w$. Here if $e,e'\in E_w$ satisfy
$e>_w e'$, then we say that $w$ prefers the edge $e$ to $e'$. This
gives rise to preference relations between the assignments on $E_w$ respecting
the quota $q(w)$, namely:
   \begin{numitem1} \label{eq:z_succ_zp}
for different $z,z'\in \Bscr_w$ with $q(w)=|z|\ge |z'|$, we write $z\succ_w z'$
if $e>_w \tilde e$ and $z(\tilde e)>0$ imply $z(e)\ge z'(e)$.
 \end{numitem1}

$\bullet$ (\textbf{choice functions}) For $f\in F$,  the preferences within
$\Bscr_f$ (defined later, in~\refeq{zzp}) are generated by a \emph{choice
function} (CF) $C=C_f:\Bscr_f\to \Bscr_f$ which satisfies $C(z)\le z$ for any $z\subseteq \Bscr_f$. Also $C$ obeys additional axioms. They concern pairs
$z,z'\in\Bscr_f$:
  \begin{itemize}
\item[(A1)] if $z\ge z'\ge C(z)$, then $C(z')=C(z)$ (\emph{consistence});
\item[(A2)] if $z\ge z'$, then $C(z)\wedge z'\le C(z')$ (\emph{substitutability}, or
\emph{persistence}).
  \end{itemize}

(Hereinafter, for $a,b\in\Rset^S$, the functions $a\wedge b$ (meet) and $a \vee b$ (join) take the values $\min\{a(e),b(e)\}$ and $\max\{a(e),b(e)\}$, $e\in S$, respectively.)

In particular, (A1) implies that any $z\in\Bscr_f$ satisfies $C(C(z))=C(z)$. As
is shown in~\cite{AG}, (A1) and (A2) imply that $C$ is \emph{stationary}, which
means that
 \begin{numitem1} \label{eq:plott}
any $z,z'\in\Bscr_f$ satisfy $C(z\vee z')=C(C(z)\vee z')$.
  \end{numitem1}

\noindent(In case of ordinary matchings (viz. 0,1 assignments), it is usually
called the  property of \emph{path independence} due to Plott~\cite{plott}.)

One more axiom imposed on each $C=C_f$, $f\in F$, known under the name of
\emph{size monotonicity},  requires that
 \begin{itemize}
 \item[(A3)] if $z\ge z'$, then $|C(z)|\ge |C(z')|$.
 \end{itemize}

An important special case of~(A3) is the condition of \emph{quota filling}; it
is applied when there is a prescribed \emph{quota} $q(f)\in \Zset_{>0}$, and is
viewed as:
 \begin{itemize}
 \item[(A4)]  any $z\in\Bscr_f$ satisfies $|C(z)|=\min\{|z|,q(f)\}$.
 \end{itemize}

Note that the above axioms are valid for the vertices $w\in W$ as well if each CF $C_w$ is generated by the linear order $>_w$ as
follows (cf. Example~1 from~\cite[Sect.~2]{AG}):
  \begin{numitem1} \label{eq:C_w}
for $z\in\Bscr_w$, (a) if $|z|\le q(w)$, then $C_w(z):=z$; (b) if $|z|> q(w)$, then, renumbering the edges in $E_w$ as $e_1,\ldots,e_{|E_w|}$ so that $e_i>_w e_{i+1}$ for each $i$, take the maximal $j$ satisfying $r:=\sum(z(e_i)\colon i\le j)\le q(w)$ and define $C_w(z)$ to be $(z(e_1),\ldots,z(e_j),q(w)-r,0,\ldots,0)$.
 \end{numitem1}

$\bullet$ (\textbf{stability}) For a vertex $v\in V$ and a function $x$ on $E$,
the restriction of $x$ to  $E_v$ is denoted as $x_v$. A function $z\in \Bscr_v$
is called  \emph{acceptable} if $C_v(z)=z$; the collection of such functions is
denoted by $\Ascr_v$. This notion is extended to $\Bscr$; namely, we
say that $x\in\Bscr$ is (globally) acceptable if $x_v\in\Ascr_v$ for all $v\in V$. The collection of acceptable assignments in $\Bscr$ is denoted
by $\Ascr$.

For any $v\in V$, the CF $C_v$ establishes preferences on acceptable
functions on $E_v$ as follows  (cf.~\cite{AG}): $z\in\Ascr_v$ is said to be (revealed) \emph{preferred} to $z'\in\Ascr'-\{z\}$ if
   \begin{equation} \label{eq:zzp}
   C_v(z\vee z')=z;
   \end{equation}
which is denoted as $z'\prec_v z$ (this matches~\refeq{z_succ_zp} for the
vertices in $W$). The relation $\prec_v$ is transitive (for $z\prec_v z'$
and $z'\prec_v z''$ imply $C_v(z\vee z'')=C_v(z\vee z'\vee
z'')=C_v(z'\vee z'')=z''$, in view of~\refeq{plott} applied to
$(z,z')$ and to $(z',z'')$).

The preferences between acceptable functions on the sets $E_v$, $v\in
V$, are extended to those on the whole $E$. Namely, for the side $F$ of $G$ and distinct $x,y\in \Ascr$, we write $x\prec_F y$ if $x_f\preceq_f y_f$ holds for all $f\in F$. The preferences in $\Ascr$ relative to the
``workers'' are defined in a similar way and denoted via $\prec_{\,W}$.

For $G$, $b$, $q(w)$, $>_w$ ($w\in W$) and $C_f$ ($f\in F$) as above, we will
refer to acceptable functions $x\in\Ascr$ on $E$ as \emph{generalized
allocations}, or \emph{g-allocations} for short. (When all CFs
$C_v$, $v\in V$, are generated by linear orders, they turn
into usual allocations introduced by Baiou and Balinski~\cite{BB}.)
 \medskip

\textbf{Definition 1.} For a vertex $v\in V$ and an acceptable function
$z\in\Ascr_v$, an edge $e\in E_v$ is called \emph{interesting} (in the sense
that some increase at $e$ could generate an assignment better than $z$) if
there exists $z'\in\Bscr_v$ such that
  \begin{equation} \label{eq:inter_e}
  z'(e)>z(e),\quad \mbox{$z'(e')=z(e')$ for all $e'\ne e$,}\quad \mbox{and $C_v(z')(e)>z(e)$}.
  \end{equation}
Extending this to acceptable g-allocations on $E$, we say that an edge
$e=wf\in E$ is \emph{interesting} for a vertex (``agent'') $v\in\{w,f\}$
under a g-allocation $x\in\Ascr$ if so is for $v$ under $x_v$. If  $e=wf\in E$
is interesting under $x$ for both vertices $w$ and $f$, then the edge $e$ is
called \emph{blocking} for $x$. A g-allocation $x\in \Ascr$ is called
\emph{stable} if no edge in $E$ blocks $x$. The set of stable g-allocations 
is denoted by $\Sscr=\Sscr(G,>,b,q,C)$.
\medskip

$\bullet$ ~For $v\in V$, the set $\Ascr_v$ endowed with the preference relation
$\succ_v$ turns into a lattice. In this lattice, for $z,z'\in\Ascr_v$, the
least upper bound (join) $z\curlyvee z'$ is expressed as $C_v(z\vee z')$ (while
the greatest lower bound (meet) $z\curlywedge z'$ is expressed by using a
certain closure operator; we omit this here). Cf.~\cite{AG}.
 \medskip

$\bullet$ ~In Alkan--Gale's general model in~\cite{AG}, given a bipartite graph $G=(V,E)$, the domain $\Bscr$ is a closed subset of the real box $\{x\in \Rset_+^{E}\colon x\le b\}$ for some $b\in \Rset_+^E$, and each choice function $C_v$ ($v\in V$) operates on the restrictions $x_v$ of $x\in\Bscr$ and obeys axioms (A1)--(A3). So our model of stable g-allocations, denoted as SGAM, is a special case of that, and general results in~\cite{AG} imply that the set $\Sscr$ of stable g-allocations possesses the following properties that will be important to us in what follows:
 \begin{numitem1} \label{eq:AG}
 \begin{itemize}
\item[(a)] $\Sscr$ is nonempty, and  $(\Sscr,\prec_F)$ is a
\emph{distributive} lattice (cf.~\cite[Ths.~1,8]{AG});
\item[(b)]
(\emph{polarity}): the order $\prec_F$ is opposite to 
$\prec_W$, namely: for $x,y\in\Sscr$, if  $x_f\prec_f y_f$ for all $f\in F$, then $y_w\prec_w x_w$ for all $w\in W$, and vice versa (cf.~\cite[Th.~4]{AG});
\item[(c)]
(\emph{unisizeness}): for each vertex $v\in V$, the size $|x_v|$ is
the same for all stable g-allocations $x\in \Sscr$ (cf.~\cite[Th.~6]{AG}).
 \end{itemize}
   \end{numitem1}

We denote the minimal and maximal elements in the lattice $(\Sscr,\prec_F)$ by
$\xmin$ and $\xmax$, respectively (then the former is the best and the latter
is the worst for the part $W$, in view of polarity~(2.3)(b)).

Note also that in case of quota filling (which, in particular, takes place for
the part $W$), property~(c) in~\refeq{AG} can be strengthened as follows
(cf.~\cite[Corollary~3]{AG}):
  \begin{numitem1} \label{eq:deficit}
for a vertex $v\in V$, if the choice function $C_v$ is quota filling (i.e.,
obeys axiom (A4)), and if some (equivalently, any) stable g-allocation $x$
satisfies $|x_v|<q(v)$, then the restriction $x_v$ is the same for all $x\in
\Sscr$.
 \end{numitem1}

In what follows, when no confuse can arise, we may write $\prec$ for
$\prec_F$.   Also for $e\in E'\subseteq E$, we write $\onebf^e_{E'}$ for the
unit base vector of $e$ in $\Rset^{E'}$ (which takes value 1 on $e$, and 0
otherwise). We usually omit the term $E'$ if it is clear from the context.

We finish this section with one more useful property.
  \begin{lemma} \label{lm:non-interest}
Let $v\in V$, $z,z'\in \Ascr_v$ and $z\prec_v z'$. Let $a\in E_v$ be such that
$z(a)\le z'(a)$ and $a$ is not interesting for $v$ under $z$. Then $a$ is not
interesting under $z'$ as well.
  \end{lemma}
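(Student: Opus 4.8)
The statement says: if $z \prec_v z'$ are acceptable on $E_v$, and $a \in E_v$ satisfies $z(a) \le z'(a)$ and $a$ is not interesting under $z$, then $a$ is not interesting under $z'$. My plan is to argue by contradiction: suppose $a$ is interesting under $z'$, so there is $z'' \in \Bscr_v$ with $z''(a) > z'(a)$, $z''(e) = z'(e)$ for $e \ne a$, and $C_v(z'')(a) > z'(a)$. I want to manufacture from this a witness to the fact that $a$ \emph{is} interesting under $z$, contradicting the hypothesis. The natural candidate is $w := z \vee \onebf^a_{E_v}\cdot t$ for an appropriate increment $t$ at $a$ — more precisely, keep $w(e) = z(e)$ for $e \ne a$ and set $w(a)$ slightly above $z(a)$ — and then show $C_v(w)(a) > z(a)$.

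The key tool is the relation $z \prec_v z'$, which by definition means $C_v(z \vee z') = z'$, together with stationarity~\refeq{plott} and substitutability (A2). First, I would like to understand $C_v$ on the vector $z' \vee z'' = z''$ (note $z'' \ge z'$ since they agree off $a$ and $z''(a) > z'(a)$). Since $z \prec_v z'$, we have $z \le z \vee z'$ and $C_v(z \vee z') = z'$; I would combine $z''$ with this. Consider $u := z \vee z''$. Then $u \ge z \vee z' $ (because $z'' \ge z'$) and also $u = z \vee z''$ agrees with $z''$ off $a$ while $u(a) = \max\{z(a), z''(a)\} = z''(a)$ (using $z(a) \le z'(a) < z''(a)$), so in fact $u = z''$ on $E_v \setminus \{a\}$ and $u(a) = z''(a)$; but wait, off $a$ we have $u(e) = \max\{z(e), z'(e)\}= (z\vee z')(e)$, not $z''(e)=z'(e)$ in general. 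The cleaner route: apply stationarity to the pair $(z', z'')$? Since $z'$ is acceptable, $C_v(z') = z'$; and $z'' \ge z'$, so by consistence/stationarity $C_v(z'') = C_v(C_v(z') \vee (z''))= C_v(z' \vee z'')=C_v(z'')$ — tautological. Instead I will use (A2) on the pair $z'' \ge z'$: $C_v(z'') \wedge z' \le C_v(z') = z'$, which gives nothing new; and on $z'' \ge$ (anything smaller).

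The productive step is to go through $z$. Set $w$ to equal $z$ everywhere except at $a$, where $w(a) := z''(a)$ (legal since $z''(a) \le b(a)$). Then $w \le z \vee z''$ and also $w \le z''$? No — off $a$, $w = z$ which need not be $\le z'' = z'$. So I will instead work with $\hat z := (z \vee z') $ modified at $a$ to value $z''(a)$; equivalently $\hat z := (z \vee z'') $ since $z'' \ge z'$ and $z'' $ agrees with $z'$ off $a$, giving $\hat z(e) = \max\{z(e), z'(e)\}$ for $e\ne a$ and $\hat z(a) = z''(a)$. Now $\hat z \ge z \vee z'$, and $C_v(z \vee z') = z'$, so by consistence is not directly applicable (wrong direction), but by substitutability applied to $\hat z \ge z\vee z'$: $C_v(\hat z) \wedge (z \vee z') \le C_v(z \vee z') = z'$. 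Separately, $\hat z \ge z''$ coordinatewise? Off $a$: $\hat z(e) = \max\{z(e),z'(e)\} \ge z'(e) = z''(e)$; at $a$: equal. So $\hat z \ge z''$, hence by (A2), $C_v(\hat z) \wedge z'' \le C_v(z'') $, and since $C_v(z'')(a) > z'(a)$ while $z''(a) > z'(a)$, I want to transfer this lower bound on coordinate $a$ back.

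The main obstacle, as the above meandering shows, is getting a \emph{lower} bound $C_v(w)(a) > z(a)$ for a vector $w$ witnessing interestingness under $z$ — the axioms (A1)–(A3) are phrased as upper bounds or equalities, so lower bounds on a chosen coordinate are delicate. I expect the resolution to use persistence (A2) in the form: if a coordinate survives in the choice from a large vector, it survives from a smaller one that still dominates enough. Concretely: from $C_v(z'')(a) > z'(a) \ge z(a)$ and an appropriate comparison of $z''$ with a vector built over $z$, deduce $C_v(\,\cdot\,)(a) > z(a)$ for that vector; then feed in $z \prec_v z'$ (i.e.\ $C_v(z\vee z') = z'$) and stationarity to cut the other coordinates down to those of $z$, arriving at a legitimate witness~\refeq{inter_e} for $a$ under $z$, the desired contradiction. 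I would organize the final write-up as: (1) set up the contradiction hypothesis and name $z''$; (2) define the comparison vector and record the two or three inequalities among $z, z', z'', $ and it; (3) apply (A2) once to push the positive $a$-coordinate down; (4) apply stationarity / $z\prec_v z'$ to replace the remaining coordinates by those of $z$; (5) conclude.
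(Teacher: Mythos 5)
Your overall strategy matches the paper's: argue by contradiction, use stationarity together with $C_v(z\vee z')=z'$ to control the choice on a ``large'' vector whose $a$-coordinate has been bumped up, then use substitutability~(A2) to transfer that control to the ``small'' vector $z+\onebf^a_{E_v}$, contradicting that $a$ is not interesting under $z$. Your closing intuition --- that (A2) means a coordinate surviving the choice from a dominating vector must survive the choice from a dominated one whose value there is at most as large --- is precisely the mechanism the paper uses; in that sense the plan is sound.

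That said, two of your five bullets as written do not describe what actually has to happen, and you have the order of operations inverted. In step (4) you say you want to use stationarity to ``replace the remaining coordinates by those of $z$,'' but stationarity reduces $z\vee z'$ toward $z'$, not $z$; and there is no need to manufacture an explicit interestingness witness for $z$ at the end --- the numerical contradiction falls out directly. The clean version is: set $p:=z'(a)$, $y:=z+\onebf^a_{E_v}$, $y':=z\vee z'+\onebf^a_{E_v}$. From the contradiction hypothesis plus (A2) one first gets $C_v(z'+\onebf^a_{E_v})(a)=p+1$; then since $z(a)\le p$ we have $y'=z\vee z'\vee(p+1)\onebf^a_{E_v}$, and stationarity with $C_v(z\vee z')=z'$ gives $C_v(y')(a)=C_v(z'+\onebf^a_{E_v})(a)=p+1$. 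Finally $y'\ge y$ and (A2) give $C_v(y')\wedge y\le C_v(y)=z$ (the last equality is the ``not interesting'' hypothesis), and at coordinate $a$ this reads $\min(p+1,\,z(a)+1)=z(a)+1\le z(a)$, a contradiction. So: compute on the large vector via stationarity \emph{first}, then transfer via (A2) --- your steps (3) and (4) should be swapped and restated in these terms. Your idea of allowing a general witness $z''$ in place of $z'+\onebf^a_{E_v}$ also works (stationarity still collapses $z\vee z''$ to $C_v(z'')$), but is a needless generalization once you notice $C_v(z'+\onebf^a_{E_v})(a)=p+1$ already follows from (A2).
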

   \begin{proof}
~Suppose, for a contradiction, that $a$ is interesting for $v$ under $z'$. Let
$C:=C_v$ and $p:=z'(a)$. Also define $y:=z+\onebf^{a}$ and $y':=z\vee
z'+\onebf^{a}$.  We have $C(z\vee z')=z'$ (since $z\prec_v z'$) and
$C(z'+\onebf^{a})(a)=p+1$ (since $a$ is interesting for $v$ under $z'$). Then
 \begin{multline*}
 C(y')(a)=C(z\vee z'+\onebf^{a})(a)= C(z\vee z'\vee (p+1)\onebf^{a})(a)  \\
 =C(C(z\vee z')\vee(p+1)\onebf^{a})(a)  =C(z'+\onebf^{a})(a)=p+1
 \end{multline*}
(using the fact that $z(a)\le z'(a)=p$). On the other hand, applying axiom (A2) to
$y'\ge y$ and using the fact that $a$ is not interesting for $v$ under $z$, we
have
  $$
 C(y')\wedge y\le C(y)=z.
  $$
But $C(y')(a)=p+1\ge z(a)+1$ and $y(a)=z(a)+1$; a contradiction.
   \end{proof}

%----------------------- Sec.3

 \section{Active graph and rotations} \label{sec:act-rot}

Throughout this section, we fix a stable g-allocation $x\in\Sscr$ different
from $\xmax$. We are interested in the set $\Sscr_x$ of stable g-allocations
$x'$ that are \emph{close} to $x$ and satisfy $x'\succ_F x$. This means that
$x$ \emph{immediately precedes} $x'$ in the lattice $(\Sscr,\prec_F)$; in other
words, there is no $y\in\Sscr$ between $x$ and $x'$, i.e. such that $x'\succ_F
y\succ_F x$. In order to find $\Sscr_x$, we construct the so-called
\emph{active graph} and then extract in it special cycles called
\emph{rotations}. Our method of constructing the rotations generalizes an
analogous method in~\cite{karz2} elaborated for the boolean variant of SGAM.
 \medskip

\textbf{Definition 2.} A vertex $w\in W$ is called \emph{deficit} if
$|x_w|<q(w)$. Otherwise (when $|x_w|=q(w)$)  $w$ is called \emph{fully filled};
the set of such vertices is denoted by $W^=$. An edge $e\in E$ is called
\emph{saturated} (by $x$) if $x(e)=b(e)$, and \emph{unsaturated} if
$x(e)<b(e)$.
  \medskip

\noindent(In light of unisizeness~\refeq{AG}(c), the set $W^=$ does not depend on $x\in\Sscr$. Also, by~\refeq{deficit}, for a deficit vertex $w$, the
restriction $x_w$ does not depend on $x\in\Sscr$.)

We will use the following
 \begin{lemma} \label{lm:e_interest}
Let $v\in V$ and $z\in\Ascr_v$. An edge $e\in E_v$ is interesting for $v$ under
$z$ if and only if

{\rm(i)} there exists $\tilde z\in\Ascr_v$ such that $\tilde z\succ_v z$ and
$\tilde z(e)>z(e)$; equivalently:

{\rm(ii)} there exists $\tilde z\in\Ascr_v$ such that $\tilde z\succ_v z$,  and
either {\rm(a)}~$\tilde z=z+\onebf^e$, or {\rm(b)}~$\tilde
z=z+\onebf^e-\onebf^{e'}$ for some $e'\in E_v-\{e\}$; equivalently,

{\rm(iii)} $C_v(z+\onebf^{e})\ne z$.
.
 \end{lemma}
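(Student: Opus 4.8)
The plan is to prove that the four conditions --- "$e$ is interesting for $v$ under $z$" in the sense of Definition~1, together with (i), (ii), (iii) --- are all equivalent, by establishing the cycle of implications
\[
\text{interesting}\ \Longrightarrow\ \text{(iii)}\ \Longrightarrow\ \text{(ii)}\ \Longrightarrow\ \text{(i)}\ \Longrightarrow\ \text{interesting}.
\]
All four conditions plainly require $z(e)<b(e)$ (otherwise there is no room to increase $z$ at $e$), so I would assume this throughout; in particular $z+\onebf^{e}\in\Bscr_v$, so $C_v(z+\onebf^{e})$ is well defined.

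First I would record an auxiliary observation that is used several times. Put $\hat z:=C_v(z+\onebf^{e})$. Then $\hat z\le z+\onebf^{e}$, so $\hat z(e)\le z(e)+1$ and $\hat z(e')\le z(e')$ for all $e'\ne e$. If moreover $\hat z(e)\le z(e)$, then $\hat z\le z\le z+\onebf^{e}$, and consistence (A1) applied to $z+\onebf^{e}\ge z\ge\hat z$ gives $C_v(z)=C_v(z+\onebf^{e})=\hat z$; since $z$ is acceptable this forces $\hat z=z$. Consequently the condition $C_v(z+\onebf^{e})\ne z$ in (iii) is equivalent to $C_v(z+\onebf^{e})(e)=z(e)+1$, a reformulation that is convenient to manipulate.

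For "interesting $\Rightarrow$ (iii)": by Definition~1 there is $z'=z+k\onebf^{e}\in\Bscr_v$ with $k\ge 1$ and $C_v(z')(e)>z(e)$; since $z'\ge z+\onebf^{e}$, substitutability (A2) gives $C_v(z')\wedge(z+\onebf^{e})\le C_v(z+\onebf^{e})$, and evaluating at $e$ yields $C_v(z+\onebf^{e})(e)\ge z(e)+1$, i.e. (iii). For "(iii) $\Rightarrow$ (ii)": set $\tilde z:=C_v(z+\onebf^{e})\in\Ascr_v$; by the auxiliary observation $\tilde z(e)=z(e)+1$ while $\tilde z(e')\le z(e')$ off $e$, hence $z\vee\tilde z=z+\onebf^{e}$ and therefore $C_v(z\vee\tilde z)=\tilde z$, that is, $\tilde z\succ_v z$. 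Now size-monotonicity (A3) gives $|z|=|C_v(z)|\le|\tilde z|\le|z+\onebf^{e}|=|z|+1$, and a short counting argument on the difference $\tilde z-(z+\onebf^{e})$ (which vanishes at $e$ and is nonpositive elsewhere, with total mass $0$ or $-1$) shows that either $\tilde z=z+\onebf^{e}$ (case (a)) or $\tilde z=z+\onebf^{e}-\onebf^{e'}$ for a unique $e'\ne e$ (case (b)). The implication "(ii) $\Rightarrow$ (i)" is immediate, since in both cases $\tilde z\in\Ascr_v$, $\tilde z\succ_v z$ and $\tilde z(e)=z(e)+1>z(e)$. Finally, for "(i) $\Rightarrow$ interesting": given $\tilde z\in\Ascr_v$ with $\tilde z\succ_v z$ and $\tilde z(e)>z(e)$, take $z':=z+\onebf^{e}$ (admissible since $z(e)+1\le\tilde z(e)\le b(e)$); then $C_v(z\vee\tilde z)=\tilde z$ and $z\vee\tilde z\ge z+\onebf^{e}$ (because $\tilde z(e)\ge z(e)+1$), so (A2) again gives $C_v(z')(e)\ge z(e)+1>z(e)$, and $z'$ witnesses that $e$ is interesting.

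The genuinely routine pieces are the two applications of (A2), each amounting to evaluating a meet at the single coordinate $e$, and the counting step inside "(iii) $\Rightarrow$ (ii)". The one place that needs care is the auxiliary observation: it is exactly the use of consistence (A1) that excludes the degenerate possibility $C_v(z+\onebf^{e})(e)=z(e)$, and pairing it with the standing assumption $z(e)<b(e)$ (so that $z+\onebf^{e}$ lies in $\Bscr_v$) is the main, though quite modest, obstacle in making the argument airtight.
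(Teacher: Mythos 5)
Your proof is correct, and it is built from the same ingredients as the paper's (axioms (A1)--(A3) plus integrality), but it is organized differently. The paper proves the biconditional with (i) directly, using the stationarity property \refeq{plott} (a consequence of (A1)--(A2)) to get $\tilde z\succ_v z$ from $\tilde z=C(z')$, then separately derives (ii), and dismisses (iii) as ``easy.'' You instead close a single cycle $\text{interesting}\Rightarrow\text{(iii)}\Rightarrow\text{(ii)}\Rightarrow\text{(i)}\Rightarrow\text{interesting}$, and you replace the stationarity argument with a crisper auxiliary dichotomy: by consistence (A1) alone, $C_v(z+\onebf^e)$ either equals $z$ or satisfies $C_v(z+\onebf^e)(e)=z(e)+1$. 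This observation does most of the work and makes (iii), rather than (i), the natural pivot; the rest is the same two single-coordinate applications of substitutability (A2) and the same size/integrality count via (A3) that the paper uses. The cycle structure also avoids the slight redundancy in the paper's proof of establishing both directions for (i) and then repeating part of the argument for (ii). Both routes are fully rigorous; yours is a mild streamlining rather than a new idea.
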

  \begin{proof}
~Given an interesting $e$ for $v$ under $z$, let $z'$ be as in~\refeq{inter_e}.
Let $\tilde z=C(z')$, where $C=C_v$. Then $\tilde z(e)>z(e)$. Since $C$ is
stationary and $z'\vee z=z'$, we have
   $$
   \tilde z=C(z')=C(z'\vee z)=C(C(z')\vee z)=C(\tilde z\vee z),
   $$
whence $\tilde z\succ_v z$. Conversely, if $\tilde z\succ_v z$ and $\tilde
z(e)>z(e)$, then $z'$ defined by $z'(e):=\tilde z(e)$ and $z'(e'):=z(e')$ for $e'\ne e$ satisfies~\refeq{inter_e} (which is seen from $C(\tilde z\vee z)=\tilde z$ and $C(\tilde z\vee z)(e)\wedge z'(e)\le C(z')(e)$, by~(A2)). This gives the
assertion with~(i).

%if $\tilde z\succ_v z$ and $\tilde
%z(e)>z(e)$, then $z':=\tilde z\vee z$ satisfies~\refeq{inter_e}. This gives the
%assertion with~(i).

To see the assertion with~(ii), consider $z'$ as in~\refeq{inter_e} and let $z'':=z+\onebf^e$. Then $z''\le z'$, and applying axiom (A2), we have $C(z')\wedge z''\le C(z'')$. This implies that $\tilde z:=C(z'')$ satisfies $\tilde z(e)=z''(e)=z(e)+1$, whence $\tilde z\succ_v z$ (since $z'$ can be replaced by $z''$ in~\refeq{inter_e}). Applying~axiom (A3) to $z''>z$, we have $|\tilde z|=|C(z'')|\ge |C(z)|=|z|$. Also
$\tilde z(e')\le z''(e')=z(e')$ for all $e'\ne e$. Now the integrality of
$\tilde z$ implies either~(a) or~(b). The assertion with~(iii) is easy.
  \end{proof}

%-----------------Subsec.3.1

\subsection{Active graph} \label{ssec:act_graph}

For a fully filled vertex $w\in W^=$, let $\ell_w(x)$ denote the last
(least preferred under $>_w$) edge of the support $\supp(x_w):=\{e\in E_w\colon
x(e)\ne 0\}$. Consider the set $E'$ of edges $e=wf\in E_w$ satisfying
  \begin{numitem1} \label{eq:2prop}
(a) $e\le_w \ell_w(x)$, and (b) $e$ is interesting for $f$ under $x_f$.
  \end{numitem1}

In particular, $e$ is unsaturated. When $E'$ is nonempty, the most preferred edge in it
is called $W$-\emph{admissible} for $x$ and denoted as $a=a_w(x)$.

Consider this edge $a=wf$. By Lemma~\ref{lm:e_interest} applied to $z:=x_f$ and $a$, the function $C_f(z+\onebf^{a})$
is expressed as either $z+\onebf^{a}$, or $z+\onebf^{a}-\onebf^{c}$
for some $c\in E_f-\{a\}$. In the latter case, we say that the edge
$c$ is $F$-\emph{admissible} for $x$ (and $z$) and \emph{associated} with $a$, and call the pair $(a,c)$ a \emph{tandem} for $x$ (and $z$) passing the vertex $f$. So
  \begin{numitem1} \label{eq:tandem}
for $f\in F$ and $z\in\Ascr_f$, any tandem $(a,c)$ for $z$ passing $f$  satisfies
$z(a)<b(a)$, $c\in\supp(z)$, and $C_f(z+\onebf^a)=z+\onebf^a-\onebf^c$.
  \end{numitem1}

(Note that if $C_f(x_f+\onebf^{a})=x_f+\onebf^{a}$, then the edge $a$ generates no tandem. Also some $F$-admissible edges may be associated with two or more $W$-admissible ones, i.e. tandems $(a,c)$ and $(a',c')$ with
$a\ne a'$ and $c=c'$ are possible.) 

The following observation is useful: for $f\in F$ and $z\in\Ascr_f$ as above, 
  \begin{numitem1} \label{eq:acd}
if $(a,c)$ is a tandem for $z$ passing $f$, then $c$ is not interesting for $f$ under $z$.
  \end{numitem1}
Indeed, if $c$ is interesting for $f$, then either (a) $C(z+\onebf^c)=z+\onebf^{c}$, or (b) $C(z+\onebf^c)=z+\onebf^{c}-\onebf^{d}$ for some $d\ne c$, where
$C=C_f$. In case~(b), consider the functions $z':=z+\onebf^{a}$,
$z'':=z+\onebf^{c}$ and $y:=z+\onebf^{a}+\onebf^{c}$. Then
$C(z')=z+\onebf^{a}-\onebf^{c}$ and $C(z'')=z+\onebf^{c}-\onebf^{d}$. Applying axiom~(A2) to the inequalities $y>z'$ and $y>z''$, we have
   $$
   C(y)\wedge z'\le C(z')\quad\mbox{and}\quad C(y)\wedge z''\le C(z'').
   $$
Then $C(y)(c)\le z(c)-1$ and $C(y)(d)\le z(d)-1$. Also $C(y)(a)\le z(a)+1$ and
$C(y)(e)\le z(e)$ for all $e\ne a,c,d$. It follows that $|C(y)|\le
|z|-1$. But then $|C(y)|<|z|=|C(z')|$, contradicting axiom~(A3) since $y>z'$.

And in case~(a), for $y$ as above, one can see that $|C(y)|\le |z|$ and $|C(z+\onebf^c)|=|z|+1$, again obtaining a contradiction with~(A3) since $y>z+\onebf^c$.
  \medskip

Property~\refeq{acd} implies that no edge can be simultaneously $W$- and
$F$-admissible.

Now we form the directed graph $D=D(x)=(V,E_D)$ whose edge set consists of the $W$-admissible edges directed from $W$ to $F$ and  the $F$-admissible edges directed from $F$ to $W$; we use notation of the form $(w,f)$ for the former,
and $(f,w)$ for the latter edges (where $w\in W$ and $f\in F$). (Hereinafter,  depending on the context, an admissible edge may be thought of as an undirected edge in $G$ or as its directed counterpart in $D$.) We refer to $D(x)$ as the \emph{auxiliary} graph
for $x$.

Apply to $D$ the following procedure.

  \begin{description}
\item[\emph{Cleaning procedure}:]
When scanning a vertex $w\in W$ in a current auxilliary graph $D$, if we
observe that $w$ has leaving $W$-admissible edge $a=(w,f)$ but no entering
$F$-admissible edge (of the form $(f',w)$), then we delete $a$ from $D$.
Simultaneously, if such an $a$ has associated edge $c\in E_f$ and the latter edge does not occur in any other tandem passing $f$ in the current $D$, then we delete $c$ from $D$ as well. Along the way, we remove from $D$ each isolated (zero
degree) vertex whenever it appears. Repeat the procedure with the updated $D$,
and so on, until  $D$ stabilizes.
  \end{description}

Let $\Gamma=\Gamma(x)=(V_\Gamma,E_\Gamma)$ denote the graph $D$ upon
termination of the above procedure. We refer to $\Gamma$ as the \emph{active
graph} for $x$. Define $W_\Gamma:=W\cap V_\Gamma$ and
$F_\Gamma:=F\cap V_\Gamma$. For a vertex $v\in V_\Gamma$, denote the set of
edges of $\Gamma$ leaving $v$ (entering $v$) by
$\deltaout(v)=\deltaout_\Gamma(v)$ (resp. by $\deltain(v)=\deltain_\Gamma(v)$).
The graph $\Gamma$ possesses the following properties of ``balancedness'':
 \begin{numitem1} \label{eq:balanc}
(a) each vertex $w\in W_\Gamma$ satisfies $|\deltaout(w)|=|\deltain(w)|=1$; (b)
each vertex $f\in F_\Gamma$ satisfies $|\deltaout(f)|=|\deltain(f)|$, and the
tandems $(a,c)$ passing $f$ are edge disjoint and form a partition of the
set $\deltaout(f)\cup\deltain(f)$ (where $a\in \deltain(f)$ and $c\in \deltaout(f)$).
  \end{numitem1}

Indeed, from the definitions of $W$- and $F$-admissible edges we observe that
$|\deltaout(w)|\le 1$ for all $w\in W_\Gamma$, and
$|\deltain(f)|\ge|\deltaout(f)|$ for all $f\in F\cap V_D$. As a result of the
Cleaning procedure, for all $w\in W$, we obtain $|\deltain(w)|\ge|\deltaout(w)|$ (and
$|\deltaout(w)|\le 1$ as before), and for all $f\in F$, the sign $\ge$ in the above inequalities  preserves. Now the desired properties follow from the fact that the total number of entering edges (over the vertices in $\Gamma$) is equal to that of leaving ones.

%-----------------Subsec.3.2

 \subsection{Rotations} \label{ssec:rotat}

From~\refeq{balanc} it follows that the active graph $\Gamma=\Gamma(x)$ is
decomposed into a set of pairwise edge disjoint directed cycles, where each
cycle $L=(v_0,e_1,v_1,\ldots,e_k,v_k=v_0)$ is uniquely constructed in a natural
way, namely: for $i=1,\ldots,k$, if $v_i\in W_\Gamma$, then
$\{e_i\}=\deltain(v_i)$ and $\{e_{i+1}\}=\deltaout(v_i)$, and if $v_i\in
F_\Gamma$, then the pair $(e_i,e_{i+1})$ forms a tandem passing $v_i$. Note
that although all edges in $L$ are different, $L$ can be self-intersecting in
vertices of the part $F$ (so $L$ is \emph{edge-simple} but not necessarily
simple). Depending on the context,  the cycle $L$ may also be regarded as a
graph and denoted as $L=(V_L,E_L)$. Also when no confuse can arise, we identify directed edges in $\Gamma$ with their underlying undirected edges in $G$.

Let $\Lscr=\Lscr(x)$ be the set of above-mentioned cycles in $\Gamma$; these
cycles  are just what we call the \emph{rotations} generated by $x$. For each
$L\in\Lscr$, define $(L^+,L^-)$ to be the partition of $E_L$ where $L^+$
consists of the edges going from $W$ to $F$, called \emph{positive} edges, and
$L^-$ of the edges going from $F$ to $W$, called \emph{negative} ones. The
terms ``positive'' and ``negative'' will be applied to the corresponding
edges in the whole active graph $\Gamma$, as well as to their underlying undirected edges. 

Hereinafter, for a subset $A$ of edges of $\Gamma$, we denote the corresponding
incidence vector in $\Rset^E$ by $\chi^A$, i.e., for $e\in E$, $\chi^A(e)$ is 1
if $e$ is (the underlying undirected copy of) an edge from $A$, and 0 otherwise.

The key properties of rotations are exhibited in the following two
propositions.

 \begin{prop} \label{pr:xxp}
~For each $L\in\Lscr(x)$, the g-allocation $x':=x+\chi^{L^+} -\chi^{L^-}$ is
stable and satisfies $x'\succ x$ (where $\succ\,=\,\succ_F$).
 \end{prop}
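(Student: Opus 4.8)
The plan is to verify the two assertions of Proposition~\ref{pr:xxp} separately: first that $x' := x + \chi^{L^+} - \chi^{L^-}$ is a g-allocation (i.e.\ admissible and globally acceptable), and then that $x' \succ_F x$ and $x'$ is stable.

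For admissibility: by~\refeq{balanc}(a), each $w \in W_\Gamma$ has exactly one positive edge $a = a_w(x)$ entering it (wait --- leaving it) and exactly one negative edge entering it, so $\chi^{L^+} - \chi^{L^-}$ is zero on $E_w$ for every fully filled $w$, hence $|x'_w| = |x_w| = q(w)$; for $w \notin W^=$, no edge of $\Gamma$ is incident to $w$, so $x'_w = x_w$. Thus the quota constraints hold. The capacity constraints hold because positive edges are unsaturated (noted after~\refeq{2prop}: a $W$-admissible edge is unsaturated) and negative edges lie in the support of the relevant $x_f$ (by~\refeq{tandem}). For acceptability, I would work vertex by vertex. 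For $w \in W^=$, $x'_w = x_w$, so $x'_w$ is acceptable. For $f \in F_\Gamma$, by~\refeq{balanc}(b) the edges of $\Gamma$ at $f$ are partitioned into tandems $(a,c)$, and $x'_f = x_f + \sum_{(a,c)}(\onebf^a - \onebf^c)$. The key local claim is that applying the tandem moves one at a time keeps acceptability and strictly improves under $\prec_f$: if $z \in \Ascr_f$ and $(a,c)$ is a tandem for $z$ passing $f$, then by~\refeq{tandem} $C_f(z + \onebf^a) = z + \onebf^a - \onebf^c =: z^\dagger$, and by Lemma~\ref{lm:e_interest} (equivalence with (iii), applied in the form $C_f(z+\onebf^a)\neq z$) together with stationarity $C_f(z^\dagger \vee z) = C_f((z+\onebf^a) \vee z) = C_f(z + \onebf^a) = z^\dagger$, so $z^\dagger \in \Ascr_f$ and $z \prec_f z^\dagger$. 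The nontrivial point is that the tandems at $f$ can be applied in \emph{any order} with the same cumulative effect --- here I would invoke substitutability (A2) and consistence (A1) to show that the edges touched by different tandems at $f$ do not interfere: distinct tandems have disjoint edge sets by~\refeq{balanc}(b), and~\refeq{acd} guarantees the associated edge $c$ of one tandem is not interesting, so adding $\onebf^{a'}$ for a second tandem does not disturb the decrease already forced at $c$. Composing, $x'_f \in \Ascr_f$ and $x_f \prec_f x'_f$, for every $f \in F_\Gamma$; and $x'_f = x_f$ for $f \notin F_\Gamma$. This simultaneously gives global acceptability and $x' \succ_F x$.

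For stability: suppose some edge $e = wf$ blocks $x'$, i.e.\ $e$ is interesting for both $w$ and $f$ under $x'$. I would derive a contradiction with the stability of $x$ by tracking how "interesting" propagates from $x$ to $x'$. Since $x_w \prec_w x'_w$ (from polarity, $x \prec_F x'$ gives $x'_w \prec_w x_w$ --- so in fact the $W$-side order is reversed; I would use the correct direction) --- more carefully, because only fully filled $w$ see any change and for them $x'_w$ differs from $x_w$ only in that one edge lost a unit and another gained one (the rotation's two edges at $w$), I would use Lemma~\ref{lm:non-interest} together with the characterization of $\ell_w$, $W$-admissibility and the minimality of the change to show: if $e$ is interesting for $w$ under $x'$, then either $e$ was already $W$-admissible (hence in $\Gamma$, hence $e = a_w(x')$-related and handled) or $e >_w a_w(x)$ in a way that contradicts $a_w(x)$ being the \emph{most preferred} edge of $E'$, or $e$ is interesting for $w$ under $x$ too. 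On the $F$-side, $x_f \prec_f x'_f$ and $x'_f(e) \le x_f(e)$ would let me apply the contrapositive of Lemma~\ref{lm:non-interest}: if $e$ were not interesting for $f$ under $x_f$ it would remain not interesting under $x'_f$ unless $x'_f(e) > x_f(e)$ --- but the only edges where $x'$ exceeds $x$ at $f$ are the positive (entering) edges of $\Gamma$ at $f$, which are $W$-admissible, hence by~\refeq{acd} cannot be $F$-admissible, and one checks they cannot be freshly interesting for $f$ after the tandem move either. Combining the two sides, a blocking edge for $x'$ yields a blocking edge for $x$, contradicting $x \in \Sscr$.

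The main obstacle I expect is the \emph{order-independence of the tandem moves at a single $f \in F_\Gamma$}: showing that $C_f$ applied successively along the tandems partitioning $\delta^{\rm in}(f) \cup \delta^{\rm out}(f)$ produces exactly $x'_f = x_f + \chi^{L^+\cap E_f} - \chi^{L^-\cap E_f}$ regardless of the order, with each intermediate function acceptable. This is precisely where the full strength of path-independence~\refeq{plott}, substitutability (A2), size-monotonicity (A3) and property~\refeq{acd} must be orchestrated; the calculations resemble those already carried out in the proof of~\refeq{acd} and of Lemma~\ref{lm:e_interest}, but must be done for a simultaneous multi-edge increment rather than a single unit bump. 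The stability argument, while requiring care with the two-sided bookkeeping and the correct (polarity-reversed) direction of the $W$-orders, should then follow routinely from Lemma~\ref{lm:non-interest} and the admissibility/closure facts established in the first part.
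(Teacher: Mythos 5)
Your overall scaffolding matches the paper's: first show $x'$ is acceptable and $x' \succ_F x$, then suppose a blocking edge $e=wf$ for $x'$ and contradict the stability of $x$. But two of the load-bearing steps are left as sketches, and in both places the sketch, as written, has a genuine gap.

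\textbf{Acceptability at $f \in F_\Gamma$.} You correctly identify this as ``the main obstacle,'' but your proposed route --- apply the tandems at $f$ one at a time and argue order-independence --- is not justified and is actually harder than what the paper does. If you apply one tandem to get $z^\dagger = x_f + \onebf^{a(1)} - \onebf^{c(1)}$, you then need that $(a(2),c(2))$ is still a tandem \emph{for $z^\dagger$}, i.e.\ $C_f(z^\dagger + \onebf^{a(2)}) = z^\dagger + \onebf^{a(2)} - \onebf^{c(2)}$. Nothing in \refeq{acd} or (A1)--(A3) gives that for free; in principle the CF could redirect the forced drop to a different edge after the first move. The paper's Lemma~\ref{lm:acac} sidesteps this by adding \emph{all} the $\onebf^{a(i)}$ simultaneously: it uses (A2) against each single-tandem inequality $z^{\emptyset} \ge y^i$ to force $C_f(z^\emptyset)(c(i)) < x(c(i))$ for every $i$, then (A3) to pin $|C_f(z^\emptyset)| = |x_f|$ so that equality holds everywhere, and finally (A1) to handle the intermediate functions $z^I$. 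That lemma is stated and proved in the paper precisely to support this proposition; you should cite it rather than try to re-derive a sequential analogue, which would require an additional induction (``the remaining tandems survive each move'') that you have not supplied.

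\textbf{Stability.} Your plan via Lemma~\ref{lm:non-interest} on the $F$-side is a workable alternative to the paper's direct stationarity computation, but you have the direction reversed: the hypothesis $z(a)\le z'(a)$ in Lemma~\ref{lm:non-interest} (with $z = x_f$, $z' = x'_f$) fails exactly when $x_f(e) > x'_f(e)$, i.e.\ when $e \in L^-$, not when $x'_f(e) > x_f(e)$. So the exceptional case you need to treat separately is $e \in L^-$; the paper disposes of it by Lemma~\ref{lm:acac} (which gives $C_f(x'_f+\onebf^e) = x'_f$, so $e$ is not interesting for $f$ under $x'$, contradicting ``blocking''). Also, your closing line --- ``a blocking edge for $x'$ yields a blocking edge for $x$'' --- overstates what is actually available. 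The paper's conclusion is subtler: it establishes that $e$ is interesting for $f$ under $x$, then uses the stability of $x$ to infer $e$ is \emph{not} interesting for $w$ under $x$, hence $e \le_w \ell_w(x)$; since $e >_w a_w(x)$ (because $a_w(x) = \ell_w(x')$ and $e$ is interesting for $w$ under $x'$), the edge $e$ satisfies \refeq{2prop} and is strictly more preferred than $a_w(x)$, contradicting the choice of $a_w(x)$ as the \emph{most preferred} such edge. You gesture at this branch but list it as one of several alternatives rather than as the argument's endpoint, and the branch ``$e$ interesting for $w$ under $x$ too'' cannot actually arise once stability of $x$ is invoked. Finally, a small slip: $\chi^{L^+} - \chi^{L^-}$ is not ``zero on $E_w$''; it takes values $+1$ and $-1$ on the two edges of $L$ at $w$ --- what is zero is its sum over $E_w$, which is what you actually need for $|x'_w| = |x_w|$.

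In short, the architecture is right and you have correctly located the hard step, but the hard step is exactly where you would need Lemma~\ref{lm:acac}, and the stability case analysis needs the paper's more careful bookkeeping of $x(e)$ vs.\ $x'(e)$ and of the role of $a_w(x)$'s extremality.
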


We say that such an $x'$ is obtained from $x$ \emph{by applying the rotation}
$L$, or by \emph{shifting along} $L$,  with weight~1 and denote the set of these over all $L\in\Lscr(x)$ by $\Sscr_x$. It will be just the set $\Sscr_x$ mentioned in the beginning of this section.

 \begin{prop} \label{pr:xpy}
Let $y\in \Sscr$ and $x\prec y$. Then there exists $x'\in\Sscr_x$ such that
$x'\preceq y$.
  \end{prop}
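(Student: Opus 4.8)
The plan is to show that starting from $x$ and the strictly larger stable g-allocation $y$, at least one of the rotations in $\Lscr(x)$ can be "read off" from the difference between $x$ and $y$, and that applying it keeps us below $y$. Since $x \prec_F y$ means $x_f \preceq_f y_f$ for all $f\in F$, and by polarity~\refeq{AG}(b) we simultaneously have $y_w \preceq_w x_w$ for all $w\in W$, the pair $(x,y)$ carries a lot of structure: on the $F$-side, $y$ is weakly preferred, and on the $W$-side, $x$ is weakly preferred. The first step is to analyze where $x$ and $y$ disagree. By unisizeness~\refeq{AG}(c) we have $|x_v|=|y_v|$ for every vertex $v$, so the set $\{e : x(e)\ne y(e)\}$ supports a $0$-sum pattern around each vertex; in particular, wherever $y(e)>x(e)$ for some edge $e=wf$, balance at $w$ and at $f$ forces compensating edges, and this is the seed of a cycle. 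I would first use the linear-order description~\refeq{C_w} of $C_w$ together with $y_w \preceq_w x_w$ to understand how $x_w$ and $y_w$ relate: since both have the same size $|x_w|=|y_w|$ and $x_w$ is preferred, $y_w$ is obtained from $x_w$ by shifting weight to less-preferred edges. If $w\in W^=$ is not fully filled then its restriction is invariant by~\refeq{deficit}, so only vertices of $W^=$ are involved.

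The second step is to locate a candidate rotation. Since $x\ne y$, pick an edge $e=wf$ with $y(e)>x(e)$ (such an edge exists: if $y\ge x$ pointwise then, using $|y_w|=|x_w|$ and $|y_f|=|x_f|$, either $y=x$ or there is strict increase somewhere and strict decrease somewhere). I would argue that at the relevant $w$, the "first place where $x_w$ and $y_w$ differ" gives an edge $a=wf$ with $x(a)<y(a)$ and $a \le_w \ell_w(x)$, so that $a$ satisfies~\refeq{2prop}(a); and that $a$ is interesting for $f$ under $x_f$ because $y_f \succ_f x_f$ (strictly, at least locally) with $y_f(a)>x_f(a)$, so Lemma~\ref{lm:e_interest}(i) applies. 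Thus $a$ is $W$-admissible or points into the active graph. The heart of the argument is then to show that the cleaning procedure does not destroy this edge: one must verify that $a$ survives into $\Gamma(x)$, i.e. the chain of admissibilities emanating from $a$ (the associated $F$-admissible edge $c$ with $C_f(x_f+\onebf^a)=x_f+\onebf^a-\onebf^c$, then $c\in\supp(x_f)$, and since $y$ still respects the revealed preference, the vertex carrying $c$'s other end again admits a continuation) closes up into a cycle $L\in\Lscr(x)$, all of whose positive edges $e$ satisfy $y(e)>x(e)$ or at least $y(e)\ge x(e)+$ (room), and whose negative edges $e$ satisfy $y(e)\le x(e)-1$ or $y(e) < x(e)$. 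This "following the difference" argument — showing the support of $y-x$ contains a full active-graph cycle — is the main obstacle, because it requires checking that the local tandem structure dictated by the choice functions $C_f$ is consistent with the global difference $y-x$, invoking stationarity~\refeq{plott}, substitutability~(A2), and Lemma~\ref{lm:non-interest} to propagate "not interesting" and "interesting" labels correctly along the cycle.

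The third step is to conclude. Once we have $L\in\Lscr(x)$ with $\chi^{L^+}\le y-x+\chi^{L^-}$ read correctly, i.e. $y(e)\ge x(e)+1$ on $L^+$ and $y(e)\le x(e)-1$ on $L^-$, we set $x' := x+\chi^{L^+}-\chi^{L^-}$. By Proposition~\ref{pr:xxp}, $x'\in\Sscr_x$ and $x'\succ_F x$. It remains to check $x'\preceq_F y$, i.e. $x'_f\preceq_f y_f$ for every $f\in F$. For $f$ not on $L$ this is just $x_f\preceq_f y_f$. For $f$ on $L$, we have $x'_f = x_f + (\text{unit increases on }L^+\cap E_f) - (\text{unit decreases on }L^-\cap E_f)$, and $x_f\preceq_f y_f$ with $x' $ obtained by moving weight from the less-preferred negative edges toward the more-preferred positive edges — exactly the direction "toward $y$". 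I would make this precise by showing $C_f(x'_f\vee y_f)=y_f$: since $x'_f\vee y_f = x_f\vee y_f$ componentwise on edges where $y$ dominates, and on $L^+$-edges where $x'(e) = x(e)+1\le y(e)$ still $y$ dominates, so actually $x'_f\vee y_f = x_f\vee y_f$, and $C_f(x_f\vee y_f)=y_f$ because $x_f\preceq_f y_f$; the only care needed is on $L^-\cap E_f$, where $x'(e)=x(e)-1$, which only lowers the join, so it still equals $y_f$ there as $y(e)\le x(e)-1\le$ well, here one uses $y(e)\ge x'(e)$ too. Hence $x'_f\preceq_f y_f$ for all $f$, giving $x'\preceq_F y$, as required. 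The cleanest way to organize the whole proof is probably to first establish the combinatorial lemma "$\supp(y-x)$, with signs, decomposes into active-graph cycles of $\Gamma(x)$" and then pick any one of them; I expect the bulk of the work, and the subtlety, to lie entirely in that decomposition lemma.
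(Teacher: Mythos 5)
Your plan hinges on a ``decomposition lemma'' asserting that the signed support of $y-x$ decomposes into cycles of the active graph $\Gamma(x)$, with $y(e)\ge x(e)+1$ on positive edges and $y(e)\le x(e)-1$ on negative edges. This is not what the paper proves, and it is the wrong invariant to carry. The $W$-admissible edge $a_w(x)$ is by definition the \emph{most preferred} edge $e\le_w\ell_w(x)$ that is interesting for its $F$-endpoint under $x$; it need not be the edge $m_w:=\ell_w(y)$ at which $y$ exceeds $x$, and indeed the paper's Claim in the proof of Proposition~\ref{pr:xpy} explicitly works through the case $y(a_w)\le x(a_w)$ (which arises precisely when $a_w>_w m_w$, forced by $x_w\succ_w y_w$ and \refeq{z_succ_zp}). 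Likewise, for the $F$-admissible partner $c$ the paper only establishes $y(c)\ne x(c)$, not a sign. So the rotation you extract is \emph{not} guaranteed to live in $\supp(y-x)$ with the signs you want, and your Step~2 does not close.

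This propagates to Step~3, which also contains a local error: even granting your signs, for $e\in L^-\cap E_f$ you have $x'(e)=x(e)-1$ and (by your assumption) $y(e)\le x(e)-1$, so $x'(e)\vee y(e)=x(e)-1\ne x(e)=x(e)\vee y(e)$; hence $x'_f\vee y_f\ne x_f\vee y_f$. (One could try to patch this with axiom~(A1), since $y_f\le x'_f\vee y_f\le x_f\vee y_f$ would suffice, but that inequality in turn requires $y(e)\ge x(e)+1$ on $L^+$, i.e.\ exactly the unproven part of Step~2.) The paper avoids pointwise comparisons altogether: it proves a Claim showing that each single-tandem modification $z^i:=x_f+\onebf^{a(i)}-\onebf^{c(i)}$ satisfies $x_f\prec_f z^i\preceq_f y_f$ in the \emph{preference} order, builds the alternating path edge by edge using that invariant (together with Lemma~\ref{lm:e_interest}, stationarity, and axioms (A2),(A3)), extracts a cycle $L$, and then shows $x'_f=z^1\curlyvee_f\cdots\curlyvee_f z^k\preceq_f y_f$ using the lattice structure and Lemma~\ref{lm:acac}. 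That is a genuinely different mechanism from your sign-based decomposition, and it is the one that actually works here.
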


The proofs of these propositions extend the corresponding ones  in~\cite{karz2} developed for the boolean variant of SGAM. They rely on the
next lemma.

\begin{lemma} \label{lm:acac}
Let $f\in F$. Let $(a(1),c(1)), \ldots,(a(k),c(k))$ be different (pairwise edge
disjoint) tandems in $\Gamma$ passing $f$. Then for any $I\subseteq
\{1,\ldots,k\}=:[k]$,
  \begin{multline*}
  C_f(x_f+\onebf^{a(1)}+\cdots+\onebf^{a(k)}-\sum(\onebf^{c(i)}\colon i\in I\}) \\
   =x_f+\onebf^{a(1)}+\cdots+\onebf^{a(k)}-\onebf^{c(1)}-\cdots-\onebf^{c(k)}.
  \end{multline*}
  \end{lemma}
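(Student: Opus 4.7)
The plan is to use the three axioms directly, essentially doing a size-counting argument via substitutability, followed by consistence to handle the general set $I$.

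First I would handle the special case $I = \emptyset$, i.e.\ show that
$$C_f\bigl(x_f + \onebf^{a(1)} + \cdots + \onebf^{a(k)}\bigr) = x_f + \onebf^{a(1)} + \cdots + \onebf^{a(k)} - \onebf^{c(1)} - \cdots - \onebf^{c(k)}.$$
Set $q := x_f + \sum_i \onebf^{a(i)}$ and $C := C_f$. Since the tandems are pairwise edge disjoint and $a(i)\ne c(i)$ by~\refeq{tandem}, the $2k$ edges $a(1),c(1),\ldots,a(k),c(k)$ are all distinct. For each $i$, apply axiom (A2) to the inequality $q\ge x_f+\onebf^{a(i)}$: this gives
$$C(q)\wedge(x_f+\onebf^{a(i)})\ \le\ C(x_f+\onebf^{a(i)})\ =\ x_f+\onebf^{a(i)}-\onebf^{c(i)},$$
and reading off the coordinate $c(i)$ (where $x_f(c(i))\ge 1$ by~\refeq{tandem}) yields $C(q)(c(i))\le x_f(c(i))-1$.

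Next I would bound $|C(q)|$. Since $C(q)\le q$, we have $C(q)(a(i))\le x_f(a(i))+1$, $C(q)(c(i))\le x_f(c(i))-1$ by the previous step, and $C(q)(e)\le x_f(e)$ for every other edge $e\in E_f$; summing over $e\in E_f$ gives $|C(q)|\le |x_f|+k-k=|x_f|$. On the other hand, $q\ge x_f$ together with axiom (A3) and $C(x_f)=x_f$ (since $x_f\in\Ascr_f$) gives $|C(q)|\ge |x_f|$. Therefore all the coordinatewise bounds are tight, and $C(q) = x_f+\sum_i\onebf^{a(i)}-\sum_i\onebf^{c(i)}$, call this function $z^\ast$.

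Finally, for an arbitrary $I\subseteq[k]$, set $z^I := x_f+\sum_i\onebf^{a(i)}-\sum_{i\in I}\onebf^{c(i)}$ and note that
$$q\ \ge\ z^I\ \ge\ z^\ast\ =\ C(q),$$
so axiom (A1) (consistence) gives $C(z^I) = C(q) = z^\ast$, which is exactly the claim of the lemma. The only non-routine step is the size count in the second paragraph; once one realizes that substitutability forces $C(q)$ to drop strictly at every $c(i)$, the rest is bookkeeping, so I do not expect a serious obstacle.
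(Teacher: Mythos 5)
Your proof is correct and follows essentially the same route as the paper's: establish the $I=\emptyset$ case by applying substitutability (A2) to the inequality $q\ge x_f+\onebf^{a(i)}$ to force $C(q)(c(i))\le x_f(c(i))-1$, combine the coordinatewise upper bounds with the size lower bound from (A3) to pin down $C(q)$, and then dispatch general $I$ via consistence (A1). The only cosmetic difference is that you spell out the coordinate bookkeeping a bit more explicitly than the paper does.
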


 \begin{proof}
~Denote $x_f+\onebf^{a(1)}+\cdots+\onebf^{a(k)}-\sum(\onebf^{c(i)}\colon i\in
I\})$ by $z^I$. We have to show that $C(z^I)=z^{[k]}$ for any
$I\subseteq[k]$, where $C=C_f$.

First we show this for $I=\emptyset$. To this aim, we compare the actions of
$C$ on $z^\emptyset=x_f+\onebf^{a(1)}+\cdots+\onebf^{a(k)}$ and on
$y^i:=x_f+\onebf^{a(i)}$ for an arbitrary $i\in[k]$. The definition of tandem
$(a(i),c(i))$ implies $C(y^i)=x_f+\onebf^{a(i)}-\onebf^{c(i)}$. Applying
axiom~(A2) to the pair $z^\emptyset\ge y^i$, we have
  $$
  C(z^\emptyset)\wedge y^i\le C(y^i)=x_f+\onebf^{a(i)}-\onebf^{c(i)}.
  $$
Since $y^i(c(i))=x(c(i))$, it follows that $C_f(z^\emptyset)(c(i))<x(c(i))$.

Thus, $C(z^\emptyset)\le z^\emptyset-\onebf^{c(1)}-\cdots-\onebf^{c(k)}=z^{[k]}$. Here the inequality must turn into equality, since the size monotonicity axiom~(A3) applied to the pair $z^\emptyset\ge x_f$ gives
  $$
  |C(z^\emptyset)|\ge|C(x_f)|=|x_f|=|z^{[k]}|.
  $$

Now consider an arbitrary $I\subseteq[k]$. Then $z^\emptyset\ge z^I\ge
C(z^\emptyset)=z^{[k]}$. Applying axiom~(A1), we obtain $C(z^I)=z^{[k]}$,
as required.
 \end{proof}

This lemma implies that
 \begin{numitem1} \label{eq:zxab}
the function
$z:=x_f+\onebf^{a(1)}+\cdots+\onebf^{a(k)}-\onebf^{c(1)}-\cdots-\onebf^{c(k)}$
is acceptable for $f$ and satisfies $z\succ_f x_f$
  \end{numitem1}
(since Lemma~\ref{lm:acac} with $I=\emptyset$ gives
$C_f(z\vee x_f)=C_f(x_f+\onebf^{a(1)}+\cdots+\onebf^{a(k)})=z$).
 \medskip

\textbf{Proof of Proposition~\ref{pr:xxp}.} First of all we note that $x'_v$ is acceptable for all $v\in V$. This is immediate from the
acceptability of $x_v$ if $v\notin V_L$. As to $w\in W\cap V_L$, we have
$|x'_w|=|x_w|=q(w)$, implying $C_w(x'_w)=x'_w$. And for $f\in F\cap V_L$,
the acceptability of $x'_f$ follows from~\refeq{zxab} when $L^+\cap
E_f=\{a(1),\ldots,a(k)\}$ and $L^-\cap E_f=\{c(1),\ldots,c(k)\}$. Thus,
$x'\in\Ascr$.

Arguing from contradiction, suppose that $x'$ is not stable and consider a
blocking edge $e=wf$ for $x'$, i.e. $e$ is interesting for $w$ under $x'_w$,
and for $f$ under $x'_f$. Note that $x(e)\le x'(e)$. (For $x(e)>x'(e)$
would imply $e\in L^-\cap E_f=:B_f$. Then Lemma~\ref{lm:acac} applied to
$A_f:=L^+\cap E_f$ and $B':=B_f-e$ should give
$C_f(x'_f+\onebf^{e})=C_f(x_f+ \chi^{A_f}-\chi^{B'})=x'_f$, contrary to the
fact that $e$ is interesting for $f$ under $x'_f$.) Also the case $x(e)<x'(e)$
is impossible (for then we would have $e\in L^+$; so $e=a_w(x)$, whence
$e$ is not interesting for $w$ under $x'$). Therefore, $x(e)=x'(e)$.

Now suppose that the given edge $e$ is not interesting for $f$ under $x$. Then
$C_f(x_f+\onebf^{e})=x_f$, whence we obtain (using the stationarity~\refeq{plott} and~\refeq{zxab}):
 $$
 C_f(x'_f\vee (x_f+\onebf^{e}))=C_f(x'_f\vee C_f(x_f+\onebf^{e}))=C_f(x'_f\vee x_f)=x'_f.
 $$
On the other hand, for $z':=C_f(x'_f+\onebf^{e})$ and $d:=x(e)=x'(e)$, we have
  \begin{multline*}
 C_f(x'_f\vee (x_f+\onebf^{e}))= C_f(x'_f\vee x_f\vee (d+1)\onebf^{e}) \\
 =C_f(C_f(x'_f\vee x_f)\vee(d+1)\onebf^{e})= C_f(x'_f+\onebf^{e})=z'.
  \end{multline*}
Therefore, $z'=x'_f$. But in view of~Lemma~\ref{lm:e_interest}(i), the fact
that $e$ is interesting under $x'_f$ must imply $z'\succ_f x'_f$; a
contradiction.

Thus, $e$ is interesting for $f$ under $x$.

Now to come to the final contradiction, compare $e$ with the $W$-admissible
edge $a_w(x)$ and the edge $\ell_w(x)$ (defined in Sect.~\SSEC{act_graph}). Since  $e$ is interesting for $w$ under $x'$, we have $e>_w a_w(x)$. But since $e$ is interesting for $f$ under $x$, it is not
interesting for $w$ under $x$ (by the stability of $x$); hence $e\le_w \ell_w(x)$.
Thus, $e$ satisfies property~\refeq{2prop} and is more preferred than the
$W$-admissible edge $a_w(x)$; a contradiction. 
So $x'$ admits no blocking edges, and therefore is stable. The fact that $x'\succ x$ follows from~\refeq{zxab}. \hfill$\qed$
 \medskip

\textbf{Proof of Proposition~\ref{pr:xpy}.} In order to find a rotation
$L\in\Lscr(x)$ determining a required g-allocation $x'$, we first note that,
by~\refeq{AG}(c), the sizes of the restrictions of $x$ and $y$ for each vertex
$v\in V$ are equal: $|x_v|=|y_v|$. Since $x\ne y$, there exists a vertex $w\in
W$ such that $x_w\ne y_w$. Then $x\prec_F y$ implies $x_w\succ_w y_w$ (by 
polarity~\refeq{AG}(b)), and for $\ell_w=\ell_w(x)$ and $m_w:=\ell_w(y)$, we
have (cf.~\refeq{z_succ_zp}):
  \begin{equation} \label{eq:ell-ellp}
  \ell_w\ge_w m_w\quad \mbox{and}\quad y(m_w)>x(m_w).
  %\mbox{$x(e)\ge y(e)$ for all $e\in E_w$ with $e>_w \ell_w$},
  \end{equation}

Therefore, the edge $m_w=wf$ is not saturated by $x$, i.e., $x(m_w)<b(m_w)$.
Moreover, $m_w$ is interesting for the vertex $f$ under $x$. Indeed,
$y_f\succeq_f x_f$ implies $C(y_f\vee x_f)=y_f$, where $C:=C_f$. Letting
$\eps:=y(m_w)-x(m_w)$ and using the stationarity, we have
  $$
  y_f=C(y_f\vee x_f)=C((y_f-\eps\onebf^{m_w})\vee (x_f+ \eps\onebf^{m_w}))
   =C((y_f-\eps\onebf^{m_w})\vee C(x_f+\eps\onebf^{m_w})).
  $$
This implies that the value of $C(x_f+\eps\onebf^{m_w})$ on $m_w$ must be at
least $y(m_w)$. Then $C(x_f+\eps\onebf^{m_w})(m_w)>x(m_w)$ (since $\eps>0$),
whence $m_w$ is interesting for $f$ under $x$.

Thus, the edge $m_w$ satisfies the property as in~\refeq{2prop} under $x$. Then
$w$ has $W$-admissible edge $a_w=wg$ and this edge satisfies $\ell_w\ge_w
a_w\ge_w m_w$. Consider the vertex $g$ and the functions $x_g$ and $y_g$. The
edge $a_w$ is interesting for $g$ under $x$.
\medskip

\noindent\textbf{Claim.}
\emph{$C_g(x_g+\onebf^{a_w})=x_g+\onebf^{a_w}-\onebf^{c}$ for some $c\in
\supp(x_g)$. In addition: {\rm(i)} $y(c)\ne x(c)$, and {\rm(ii)}
$z:=x_g+\onebf^{a_w}-\onebf^{c}$ satisfies $x_g\prec_g z\preceq_g y_g$.}
 \medskip

 \begin{proof}
~By Lemma~\ref{lm:e_interest}(ii), since $a_w$ is interesting for $g$ under
$x_g$, we have $\tilde z:=C(x_g+\onebf^{a_w})\succ_w x_g$ and two cases are
possible: (a) $\tilde z=x_g+\onebf^{a_w}$, or~(b) $\tilde
z=x_g+\onebf^{a_w}-\onebf^{c}$ for some $c\in \supp(x_g)$, where $C:=C_g$.

%In case~(a), an analysis is easy when $C$ is quota filling. Indeed, $|\tilde z|>|x_g|$ and $C(\tilde z)=\tilde z$ imply that the vertex $g$ is deficit (see Definition~2). Then $y_g=x_g$ (by~\refeq{deficit}), and therefore, $a_w$ is interesting for $g$ under $y$. But $y(a_w)=x(a_w)$ implies $a_w>_w m_w$ (cf.~\refeq{ell-ellp}). Then $a_w$ is interesting for $w$ under $y$ as well, whence $a_w$ is blocking for $y$, contrary to the stability of $y$.

Let us show that case~(a) is impossible. First assume that $y(a_w)\ge x(a_w)+1$. Then $x_g+\onebf^{a_w}\le y_g\vee x_g$. At the same time, $|x_g|=|y_g|$ and
$C(y_g\vee x_g)=y_g$ imply
  $$
  |C(x_g+\onebf^{a_w})|=|x_g+\onebf^{a_w}|= |x_g|+1>|y_g|=|C(y_g\vee x_g)|,
  $$
contradicting axiom~(A3). Now assume that $y(a_w)\le x(a_w)=:d$. Then
 $$
  C(y_g\vee x_g+\onebf^{a_w})=C(y_g\vee x_g\vee (d+1)\onebf^{a_w})=
  C(C(y_g\vee x_g)\vee(d+1)\onebf^{a_w})= C(y_g\vee (d+1)\onebf^{a_w})
 $$
(taking into account that $d<b(a_w)$). Applying axiom~(A3) to
the inequality $y_g\vee x_g+\onebf^{a_w}\ge x_g+\onebf^{a_w}$, we obtain
  $$
  |C(y_g\vee (d+1)\onebf^{a_w})|=|C(y_g\vee x_g+\onebf^{a_w})|\ge |C(x_g+\onebf^{a_w})|=|x_g|+1=|y_g|+1.
  $$
This implies $C(y_g+\onebf^{a_w})\ne y_g$, whence the edge $a_w$ is interesting
for $g$ under $y$. But the inequality $y(a_w)\le x(a_w)$ is possible only if
$a_w\ne m_w$ (cf.~\refeq{ell-ellp}). Then $a_w>_w m_w$ and $y(a_w)<b(a_w)$
imply that $a_w$ is interesting for $w$ under $y$. So $a_w$ is blocking for
$y$; a contradiction.
 \smallskip

Thus, case~(b) takes place (and, obviously, $c\ne a_w$). Letting
$d:=x(a_w)$, we have
   $$
     z':=C(y_g\vee (d+1)\,\onebf^{a_w})=C(y_g\vee x_g\vee (d+1)\,\onebf^{a_w})
     =C((y_g-\onebf^{c})\vee (x_g+\onebf^{a_w}-\onebf^{c}).
    $$
It follows that $z'(c)<y(c)$. Then $y(a_w)\le x(a_w)$ (for otherwise,
$z'=C(y_g)= y_g$). This implies $a_w>_w m_w$, and we can see that
  $$
  C(y_g+\onebf^{a_w})=y_g+\onebf^{a_w}-\onebf^{c}.
  $$
But then $a_w$ is interesting under $y$ for both $w$ and $g$; a contradiction.

Thus, (i) in the Claim is valid. Next we show the relation concerning $z$
and $y_g$ in~(ii) (where $x_g\prec_g z$ is clear). Let $d:=x(a_w)$. First
suppose that $y(a_w)>d$. Then
  $$
  C(y_g)=C(y_g\vee x_g\vee (d+1)\onebf^{a_w})=C(y_g\vee C(x_g+\onebf^{a_w}))
    =C(y_g\vee z),
    $$
implying $z\preceq_g y_g$, as required. Now let $y(a_w)\le d$. Then $a_w>_w m_w$
(by~\refeq{ell-ellp}) and $y(a_w)<b(a_w)$ (since $x(a_w)<b(a_w)$). Therefore,
$a_w$ is interesting for $w$ under $y$. Also
  $$
  C(y_g\vee (d+1)\onebf^{a_w})=C(y_g\vee x_g\vee(d+1)\onebf^{a_w})=C(y_g\vee C(x_g+\onebf^{a_w}))=C(y_g\vee z).
  $$
If $C(y_g\vee z)=y_g$, we are done. And if  $C(y_g\vee z)\ne y_g$, then
$C(y_g\vee (d+1)\onebf^{a_w})\ne y_g$, whence $C(y_g+\onebf^{a_w})\ne y_g$ (cf.
Lemma~\ref{lm:e_interest}(iii)). But then $a_w$ is interesting under $y$ for
both $w$ and $g$, contrary to the stability of $y$. The Claim is proven. \end{proof}

Now we finish the proof of the proposition as follows. Let $c=w'g$. The Claim
implies that the edge $c$ forms a tandem with $a_w$. We have
$c\ge_{w'}\ell_{w'}(x)$ and $x_{w'}\ne y_{w'}$ (since $y(c)\ne x(c)>0$, by the
Claim). Then $w'$ has  $W$-admissible edge $a_{w'}(x)$; it satisfies
$\ell_{w'}(x)\ge_{w'} a_{w'}(x)\ge_{w'} m_{w'}$, where $m_{w'}:=\ell_{w'}(y)$. So
we can handle the edge $a_{w'}(x)$ in a way similar to that applied earlier to
$a_w$.

By continuing this process, we obtain an ``infinite'' path consisting of
alternating $W$-admissible and $F$-admissible edges in the auxiliary graph
$D(x)$. In this path, every pair of consecutive (underlying undirected) edges
incident to a vertex $f$ in $F$, say, $e,e'\in E_f$, forms a tandem such that:
$e\ne e'$, $x(e)<b(e)$, $e'\in\supp(x_f)$, and the function
$z:=x_f+\onebf^{e}-\onebf^{e'}\in\Ascr_f$ satisfies $z\preceq_f  y_f$, by the
Claim. Extracting from this path a minimal portion between two copies of the
same vertex in $W$, we obtain a cycle $L$ which is a rotation generated by $x$;
it determines the stable g-allocation $x':=x-\chi^{L^-}+\chi^{L^+}$, by
Proposition~\ref{pr:xxp} (where $L^+$ and $L^-$ are the sets of positive and
negative edges in $L$, respectively).

For $f\in F$, the cycle $L$ may pass $f$ several times; let
$(a(1),c(1)),\ldots,(a(k),c(k))$ be the tandems in $L$ passing $f$. By the
Claim, for $i=1,\ldots,k$, the function $z^i:=x_f+\onebf^{a(i)}-\onebf^{c(i)}$
satisfies $x_f\prec_f z^i\preceq_f y_f$. Then in the lattice
$(\Ascr,\succ_f)$, the join $\widehat z:=z^1\curlyvee_f \cdots \curlyvee_f z^k$
satisfies $x_f\prec_f \hat z \preceq_f y_f$. Now using Lemma~\ref{lm:acac},
we have
  $$
  z^1\curlyvee_f \cdots \curlyvee_f z^k=C_f(z^1\vee \cdots \vee z^k)=
  \vee_{i=1}^k(x_f+\onebf^{a(i)}-\onebf^{c(i)}).
  $$
Therefore, $\hat z$ coincides with the restriction $x'_f$ of $x'$ to $E_f$,
and we can conclude that the g-allocation $x'\in \Sscr_x$ satisfies $x\prec_F
x'\preceq_F y$, as required. \hfill$\qed\qed$

%----------------------- Sec.4

\section{Additional properties of rotations} \label{sec:addit_prop}

In this section we describe more properties of rotations in our model; they
will be useful to construct the poset of rotations in the next section.

Consider a rotation $L\in\Lscr(x)$ for a stable g-allocation $x$. In the
previous section we described the transformation of $x$ that consists in
increasing the values of $x$ by 1 on the set $L^+$, and decreasing by 1 on
$L^-$. The resulting g-allocation $x':=x+\chi^{L^+}-\chi^{L^-}$ is again stable
and more preferred for $F$: $x'\succ_F x$; in this case we say that $x'$ is obtained
from $x$ by \emph{shifting with weight 1 along} $L$. We, however,
can try to increase the weight of shifting along $L$.
\medskip

\noindent\textbf{Definition 3.} A weight $\lambda\in\Zset_{>0}$ is called
\emph{feasible} for a rotation $L\in\Lscr(x)$ if the following conditions hold:
 \begin{numitem1} \label{eq:feas_shift}
(a) $\lambda\le x(e)$ for each $e\in L^-$;\; (b) $\lambda\le b(e)-x(e)$ for
each $e\in L^+$;\; and
(c)~$C_f(x_f+\lambda\,\onebf^a)=x_f+\lambda\,\onebf^{a}-\lambda\,\onebf^{c}$
for each vertex $f\in F_L=F\cap V_L$ and each tandem $(a,c)$ in $L$ passing
$f$.
  \end{numitem1}

\begin{prop} \label{pr:feas_mu}
Let $\lambda$ be feasible for $L\in\Lscr(x)$, and let
$\mu\in\{1,2,\ldots,\lambda\}$. Then $x^\mu:=x+\mu\chi^{L^+}-\mu\chi^{L^-}$ is
a stable g-allocation.
 \end{prop}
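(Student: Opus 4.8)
The goal is to show that for a feasible weight $\lambda$ and any intermediate $\mu\in\{1,\ldots,\lambda\}$, the assignment $x^\mu:=x+\mu\chi^{L^+}-\mu\chi^{L^-}$ is a stable g-allocation. I would proceed in two parts: first establish that $x^\mu$ is an admissible acceptable assignment (i.e. lies in $\Ascr$ and respects the quota constraints), and then establish stability by a blocking-edge argument that closely parallels the proof of Proposition~\ref{pr:xxp}. The feasibility conditions \refeq{feas_shift}(a),(b) immediately give $0\le x^\mu\le b$, and since every $w\in W\cap V_L$ has exactly one incoming and one outgoing edge of $L$ (by \refeq{balanc}(a)), the net change at $w$ is zero, so $|x^\mu_w|=|x_w|\le q(w)$; for $w\notin V_L$ nothing changes. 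Hence $x^\mu$ is admissible.

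\textbf{Acceptability.} For $w\in W\cap V_L$ we have $|x^\mu_w|=|x_w|$, and by unisizeness \refeq{AG}(c) together with Definition~2, if $w\in W^=$ then $|x_w|=q(w)$, forcing $C_w(x^\mu_w)=x^\mu_w$; moreover the only vertices of $W$ touched by rotations lie in $W^=$ by the construction of the active graph, so this covers all relevant cases (a deficit vertex never appears in $\Gamma(x)$). For $f\in F\cap V_L$, let $(a(1),c(1)),\ldots,(a(k),c(k))$ be the tandems of $L$ passing $f$, so that $x^\mu_f=x_f+\mu(\onebf^{a(1)}+\cdots+\onebf^{a(k)})-\mu(\onebf^{c(1)}+\cdots+\onebf^{c(k)})$. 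Here I would prove the analogue of Lemma~\ref{lm:acac} with weight $\mu$ in place of $1$: feasibility \refeq{feas_shift}(c) says $C_f(x_f+\lambda\onebf^{a(i)})=x_f+\lambda\onebf^{a(i)}-\lambda\onebf^{c(i)}$ for each single tandem, hence by consistence (A1) also $C_f(x_f+\mu\onebf^{a(i)})=x_f+\mu\onebf^{a(i)}-\mu\onebf^{c(i)}$ for every $\mu\le\lambda$ (since $x_f+\lambda\onebf^{a(i)}\ge x_f+\mu\onebf^{a(i)}\ge C_f(x_f+\lambda\onebf^{a(i)})$, using $c(i)\in\supp(x_f)$ so the lower bound holds on coordinate $c(i)$ too). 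Then the same substitutability-and-size-monotonicity argument as in Lemma~\ref{lm:acac} — apply (A2) to $z^\emptyset\ge x_f+\mu\onebf^{a(i)}$ to force $C_f(z^\emptyset)(c(i))\le x(c(i))-\mu$ for each $i$, then (A3) on $z^\emptyset\ge x_f$ to upgrade the resulting inequality to equality — yields $C_f(x^\mu_f)=x^\mu_f$ and $x^\mu_f\succ_f x_f$. Thus $x^\mu\in\Ascr$.

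\textbf{Stability.} Suppose an edge $e=wf$ blocks $x^\mu$. As in the proof of Proposition~\ref{pr:xxp}, I would first pin down $x(e)$ versus $x^\mu(e)$: if $x^\mu(e)<x(e)$ then $e\in L^-\cap E_f$, and the weighted Lemma~\ref{lm:acac}-analogue applied to the remaining tandems shows $C_f(x^\mu_f+\onebf^e)$ still recovers (a subconfiguration of) $x^\mu_f$ on coordinate $e$, contradicting $e$ being interesting for $f$ under $x^\mu_f$; if $x^\mu(e)>x(e)$ then $e\in L^+$, so $e=a_w(x)$ is $W$-admissible under $x$ and $x^\mu(e)\le b(e)$ but $e$ cannot be interesting for $w$ under $x^\mu$ since $x^\mu_w$ agrees with $x_w$ except possibly on the two $L$-edges at $w$ and the incoming edge has only grown — here I would reuse the observation from Proposition~\ref{pr:xxp} that a positive $L$-edge at $w$ is never interesting for $w$ afterwards. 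Hence $x(e)=x^\mu(e)$. Next, exactly as in Proposition~\ref{pr:xxp}, if $e$ is not interesting for $f$ under $x$ then stationarity \refeq{plott} together with $x^\mu_f\succ_f x_f$ gives $C_f(x^\mu_f\vee(x_f+\onebf^e))=x^\mu_f$, which equals $C_f(x^\mu_f+\onebf^e)$ since $x(e)=x^\mu(e)$, contradicting that $e$ is interesting for $f$ under $x^\mu_f$ (Lemma~\ref{lm:e_interest}(i)). So $e$ is interesting for $f$ under $x$; but then by stability of $x$ it is not interesting for $w$ under $x$, so $e\le_w\ell_w(x)$, and since $e$ is interesting for $w$ under $x^\mu$ (hence under $x$, as $w$'s support and the relevant values are unchanged at $e$) we get $e>_w a_w(x)$ — contradicting that $a_w(x)$ is the most preferred edge satisfying \refeq{2prop} under $x$. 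Therefore no blocking edge exists and $x^\mu$ is stable.

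\textbf{Main obstacle.} The only genuinely new ingredient beyond Proposition~\ref{pr:xxp} is the weighted version of Lemma~\ref{lm:acac}: I must be careful that the consistence step (A1) legitimately transfers the single-tandem identity from weight $\lambda$ down to weight $\mu$, which requires checking the two-sided inequality $x_f+\lambda\onebf^{a(i)}\ge x_f+\mu\onebf^{a(i)}\ge C_f(x_f+\lambda\onebf^{a(i)})=x_f+\lambda\onebf^{a(i)}-\lambda\onebf^{c(i)}$; the right inequality holds coordinate-wise precisely because $c(i)\in\supp(x_f)$ and $x_f$ is untouched elsewhere, and because $\mu\le\lambda$. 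Once that transfer is in hand, every remaining step is a verbatim transcription of the weight-$1$ arguments already carried out in Section~\SEC{act-rot}, so I would present the proof compactly by citing Lemma~\ref{lm:acac} and Proposition~\ref{pr:xxp} for the parts that go through unchanged and spelling out only the (A1)-transfer and the places where $1$ must be replaced by $\mu$.
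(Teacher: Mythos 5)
Your overall structure is sound, but the step you yourself flag as ``the only genuinely new ingredient'' — the transfer of the single-tandem identity from weight $\lambda$ down to weight $\mu$ via axiom~(A1) — does not work, and this breaks the proof.

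Concretely, you claim that $x_f+\lambda\onebf^{a(i)}\ge x_f+\mu\onebf^{a(i)}\ge C_f(x_f+\lambda\onebf^{a(i)})$, from which (A1) would give $C_f(x_f+\mu\onebf^{a(i)})=x_f+\mu\onebf^{a(i)}-\mu\onebf^{c(i)}$. There are two errors here. First, the right-hand inequality fails at coordinate $a(i)$: since $C_f(x_f+\lambda\onebf^{a(i)})(a(i))=x_f(a(i))+\lambda$ while $\bigl(x_f+\mu\onebf^{a(i)}\bigr)(a(i))=x_f(a(i))+\mu$, one would need $\mu\ge\lambda$, whereas we have $\mu\le\lambda$ (and strictly less is the case of interest). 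You checked the coordinate $c(i)$ but overlooked the offending coordinate $a(i)$. Second, even if the sandwich held, (A1) would force $C_f(x_f+\mu\onebf^{a(i)})=C_f(x_f+\lambda\onebf^{a(i)})=x_f+\lambda\onebf^{a(i)}-\lambda\onebf^{c(i)}$, which is not the equality you assert — and it would even violate $C_f(z)\le z$ at $a(i)$. So (A1) simply is not the right tool for this interpolation.

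The paper establishes the identity $C_f(x_f+\mu\onebf^{a})=x_f+\mu\onebf^{a}-\mu\onebf^{c}$ (eq.~\refeq{xfmu1}) differently: apply (A3) to $y^\lambda\ge y^\mu\ge y^1$ (where $y^\nu:=x_f+\nu\onebf^a$) to pin $|C_f(y^\mu)|=|x_f|$; apply (A2) to $y^\lambda\ge y^\mu$ to get $C_f(y^\lambda)\wedge y^\mu\le C_f(y^\mu)\le y^\mu$, which fixes $C_f(y^\mu)$ on $a$ and on every $e\ne a,c$; and then the size equality determines the value at $c$. Only after this does (A1) enter, and only in the role you had in Lemma~\ref{lm:acac}, namely to pass from $x_f+\mu\Delta$ down to intermediate points. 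You should replace your (A1)-transfer by this (A2)+(A3) argument; once that is done, the rest of your blocking-edge calculation is on firm ground. Note also that the paper's stability argument is not a verbatim rerun of Proposition~\ref{pr:xxp}: it proves the additional persistence property~\refeq{a(i)interest} (each tandem of $L$ remains a tandem for every intermediate $x^\nu$) and then closes by applying Proposition~\ref{pr:xxp} inductively to $x^0,x^1,\ldots$; a direct blocking-edge argument like yours can be made to work, but you would still need the corrected weighted analogue of Lemma~\ref{lm:acac} to justify the $e\in L^-$ subcase.
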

  \begin{proof}
By~\refeq{feas_shift}(a),(b), we have $0\le x^\mu(e)\le b(e)$ for all $e\in E$,
i.e., $x^\mu\in\Bscr$. Also $|L^+\cap E_w|=|L^-\cap E_w|=1$ for each $w\in W$
implies that $x^\mu$ obeys the quota constraints on $W$. We start the proof with
showing that for a tandem $(a,c)$ in $L$ passing $f\in F$,
  \begin{equation} \label{eq:xfmu1}
C(x_f+\mu\,\onebf^{a})=x_f+\mu\,\onebf^{a}-\mu\,\onebf^{c},
  \end{equation}
where $C:=C_f$. To see this, for $\nu\in\{1,\mu,\lambda\}$, denote
$x_f+\nu\,\onebf^{a}$ by $y^\nu$. Then $y^\lambda\ge y^\mu\ge y^1$, implying
$|C(y^\lambda)|\ge |C(y^\mu)|\ge |C(y^1)|$, by~axiom (A3). This and
$|C(y^\lambda)|=|C(y^1)|=|x_f|$ imply $|C(y^\mu)|=|x_f|$. At the same time,
axiom (A2) applied to $y^\lambda\ge  y^\mu$ gives $C(y^\lambda)\wedge y^\mu\le
C(y^\mu)\le y^\mu$. This together with
$C(y^\lambda)=x_f+\lambda\,\onebf^{a}-\lambda\,\onebf^{c}$ and $\lambda\ge\mu$
implies $C(y^\mu)(a)=x(a)+\mu$ and $C(y^\mu)(e)=x(e)$ for all $e\in
E_f-\{a,c\}$. Now using $|C(y^\mu)|=|x_f|$, we obtain $C(y^\mu)(c)=x(c)-\mu$,
yielding~\refeq{xfmu1}.

For $f\in F$, let $(a(1),c(1)),\ldots,(a(k),c(k))$ be the tandems for $x$ occurring in $E_f\cap L$.
Put $\Delta:=\onebf^{a(1)}+\cdots+\onebf^{a(k)}$ and
$\nabla:=\onebf^{c(1)}+\cdots+\onebf^{c(k)}$. Using~\refeq{xfmu1} and arguing
as in the proof of Lemma~\ref{lm:acac}, one can see that
  \begin{equation} \label{eq:xmuDelta}
  C(x_f+\mu\Delta)=x_f+\mu\Delta-\mu\nabla \quad (=x_f^\mu).
  \end{equation}

Indeed, for $i=1,\ldots,k$, apply axiom (A2) to the inequality $z\ge y^{\mu,i}$,
where $z:=x_f+\mu\Delta$ and $y^{\mu,i}:=x_f+\mu\,\onebf^{a(i)}$. We obtain
$C(z)(c(i))\le C(y^{\mu,i})(c(i))=x_f(c(i))-\mu$ (using~\refeq{xfmu1}). This
implies $C(z)\le x_f^\mu$ (when $i$ varies). Since~axiom (A3) applied to $z>x_f$
gives $|C(z)|\ge|x_f|=|x_f^\mu|$,  equality~\refeq{xmuDelta} follows.

The next step in the proof is to show (using~\refeq{xfmu1}
and~\refeq{xmuDelta}) that for $f$ as above,
  \begin{numitem1} \label{eq:a(i)interest}
for $\mu=1,\ldots,\lambda-1$ and $i=1,\ldots,k$, the edge $a(i)$ is interesting
for $f$ under $x_f^\mu$, and there holds
$C(x_f^\mu+\onebf^{a(i)})=x_f^\mu+\onebf^{a(i)}-\onebf^{c(i)}$.
  \end{numitem1}
(extending the property that $a(i)$ is interesting for $f$ under $x_f$). To
show this, compare the functions $z=x_f+\mu\Delta$ (as before) with
$z':=x_f+\mu\Delta+\onebf^{a(i)}$ and $z'':=x_f+(\mu+1)\,\onebf^{a(i)}$.  Then
$z'>z$ and $z'\ge z''$. Applying~(A2) to $z'>z$ and using~\refeq{xmuDelta}, we
have
  $$
  C(z')\wedge z\le C(z)=x_f+\mu\Delta-\mu\nabla,
  $$
whence we obtain (a): $C(z')(e)\le (x_f+\mu\Delta-\mu\nabla)(e)$ for all $e\ne
a(i)$. In turn, axiom (A2) applied to $z'\ge z''$ gives (b): $C(z')(c(i))\le
C(z'')(c(i))=x(c(i))-\mu-1$ (using~\refeq{xfmu1} with $\mu+1$ and taking into
account that $x(c(i))\ge\lambda>\mu$, by~\refeq{feas_shift}(a)). Also, by~axiom (A3)
applied to $z'\ge z''$, we have (c): $|C(z')|\ge |C(z'')|=|x_f|$. From the
obtained relations (a),(b),(c) one can conclude that
  \begin{equation} \label{eq:Czp}
  C(z')=x_f+\mu\Delta-\mu\nabla+\onebf^{a(i)}-\onebf^{c(i)}=x_f^\mu+\onebf^{a(i)}-\onebf^{c(i)}.
  \end{equation}

Now let $y:=x_f^\mu+\onebf^{a(i)}$. Then $z'>y>C(z')$ (by~\refeq{Czp}), and
applying axiom~(A1), we obtain
$C(y)=C(z')=x_f^\mu+\onebf^{a(i)}-\onebf^{c(i)}$. This
gives~\refeq{a(i)interest}.

To finish the proof, consider the sequence $x=x^0,x^1,x^2,\ldots,x^\lambda$.
Initially, each $W$-admissible edge $a=wf\in L^+$ satisfies
$a=a_w(x)\le_w\ell_w(x)$, the g-allocation $x^1$ is obtained from $x$ by
shifting with weight 1 along the rotation $L$, and $x^1$ is stable by
Proposition~\ref{pr:xxp}. Under the update $x\mapsto x^1$, the edge $a$ as
above becomes the last edge in $\supp(x^1_w)$, i.e., $a=\ell_w(x^1)$. Moreover,
by~\refeq{a(i)interest} (with $\mu=1$), $a$ is interesting for $f$ under $x^1$,
and the tandem in $E_f$ involving $a$ preserves. Therefore, $a$ is the
$W$-admissible edge at $w$ under $x^1$, the previous rotation $L$ remains
applicable to $x^1$ as well, and the g-allocation $x^2$ obtained from $x^1$ by
shifting with weight 1 along $L$ is again stable (by Proposition~\ref{pr:xxp}
applied to $x^1$ and $L$). And so on. As a result, we obtain (by induction)
that each $x^\mu$ ($1\le\mu\le\lambda$) is stable. This completes the proof of
the proposition.
  \end{proof}

For a rotation $L\in\Lscr(x)$, we denote the \emph{maximal} feasible weight
$\lambda$ by $\tau_L=\tau_L(x)$. Also denote the set of tandems $(a,c)$ for
$x$ and $L$ passing a vertex $f\in F_L$ by $T_f(x,L)$. In light
of~\refeq{feas_shift}, $\tau_L$ can be computed efficiently, in the standard
``divide-and-conquer'' manner, namely, by use of the following iterative
procedure:
 \begin{itemize}
\item[(P):]
Initially assign lower and upper bounds for $\tau_L$ to be $\lambda:=1$ and
$\nu:=\min\{\min\{x(e)\colon e\in L^-\}, \min\{b(e)-x(e)\colon e\in L^+\}\}$,
respectively. Take the (sub)middle integer weight $\mu$ in the interval
$[\lambda,\nu]$, namely, $\mu:=\lfloor(\lambda+\nu)/2\rfloor$, and for each
vertex $f\in F_L$ and tandem $(a,c)\in T_f(x,L)$, compute the vector
$z_{f,a}:=C_f(x_f+\mu\,\onebf^{a})$. If each $z_{f,a}$ among these is equal to
$x_f+\mu\,\onebf^{a}-\mu\,\onebf^{c}$, then the lower bound is updated as
$\lambda:=\mu$. And if this is not so for at least one vector $z_{f,a}$, then
the upper bound is updated as $\nu:=\mu$. Then we handle in a similar way the
updated interval $[\lambda,\nu]$. And so on until we get $\lambda,\nu$ such
that $\nu-\lambda\le 1$.
 \end{itemize}

Then
 \begin{numitem1} \label{eq:log_iter}
procedure (P) terminates in at most $\log_2 b^{\rm max}$ iterations, where
$\bmax:=\max\{b(e)\colon e\in E\}$, and upon termination
of~(P), $\lambda$ or $\lambda+1$ is exactly $\tau_L(x)$.
 \end{numitem1}
Also using Proposition~\ref{pr:feas_mu} and Lemma~\ref{lm:acac}, one
can conclude that for $\lambda\in\Zset_{>0}$,
   \begin{numitem1} \label{eq:l<t}
if $\lambda<\tau_L(x)$, then shifting $x$ with weight $\lambda$ along $L$ preserves
the active graph, i.e., $\Gamma(x')=\Gamma(x)$ holds for
$x':=x+\lambda\chi^{L^+}-\lambda\chi^{L^-}$, implying $\Lscr(x')=\Lscr(x)$;
also $\tau_L(x')=\tau_L(x)-\lambda$ and $\tau_{L'}(x')=\tau_{L'}(x)$ for all
$L'\in\Lscr(x)-\{L\}$.
  \end{numitem1}

On the other hand, one can see that
  \begin{numitem1} \label{eq:l=t}
under the transformation of $x$ into the stable g-allocation
$x':=x+\tau_L(x)(\chi^{L^+}-\chi^{L^-})$, at least one of the following three
\emph{events} happens: (I) some $e\in L^-$ becomes 0-valued: $x'(e)=0$; (II)
some $e\in L^+$ becomes saturated: $x'(e)=b(e)$; or (III) there is $f\in F$
such that some tandem $(a,c)\in T_f(x,L)$ is destroyed, in the sense that
$\tau_L(x)<\min\{b(a)-x(a),\, x(c)\}$ but for
$z:=x_f+\tau_L(x)(\chi^{L^+}-\chi^{L^-})\rest{E_f}$, the vector
$C_f(z+\onebf^{a})$ is different from $z+\onebf^{a}-\onebf^{c}$.
  \end{numitem1}

Note that~\refeq{l<t} and~\refeq{l=t} imply that rotations with feasible
weights for $x$ commute. More precisely, using Lemma~\ref{lm:acac}, one can conclude with the following
\begin{corollary} \label{cor:commute}
Let $\Lscr'\subseteq\Lscr(x)$ and let $\lambda:\Lscr'\to\Zset_{>0}$ be such
that $\lambda(L)\le \tau_L(x)$ for each $L\in\Lscr'$. Then the g-allocation
$x':=x+\sum(\lambda(L)(\chi^{L^+}-\chi^{L^-})\colon L\in\Lscr')$ is stable,
each $L\in \Lscr'$ with $\lambda(L)<\tau_L(x)$ is a rotation for $x'$ having
the maximal feasible weight $\tau_L(x')=\tau_L(x)-\lambda(L)$, and each $L'\in
\Lscr(x)-\Lscr'$ is a rotation for $x'$ with $\tau_{L'}(x')=\tau_{L'}(x)$. In
particular, rotations in $\Lscr(X)$ can be applied in an arbitrary order.
\end{corollary}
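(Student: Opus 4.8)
The plan is to prove Corollary~\ref{cor:commute} by induction on $|\Lscr'|$, reducing the multi-rotation statement to the single-rotation facts already established in~\refeq{l<t} and~\refeq{l=t} (and Proposition~\ref{pr:feas_mu}), with Lemma~\ref{lm:acac} doing the local bookkeeping at vertices of $F$. First I would dispose of the base case $|\Lscr'|=1$: there the claim is essentially a restatement of Proposition~\ref{pr:feas_mu} together with~\refeq{l<t} (for $\lambda(L)<\tau_L(x)$) and, trivially, the ``no change'' part of the conclusion follows since there are no other rotations in $\Lscr'$. For the inductive step, pick any $L_0\in\Lscr'$ and set $x'':=x+\lambda(L_0)(\chi^{L_0^+}-\chi^{L_0^-})$. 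By Proposition~\ref{pr:feas_mu} this $x''$ is stable, and by~\refeq{l<t} (if $\lambda(L_0)<\tau_{L_0}(x)$) or directly from the statement of~\refeq{l<t} we know $\Lscr(x'')\supseteq\Lscr(x)\setminus\{L_0\}$ with the maximal feasible weights of all the other rotations unchanged: $\tau_{L'}(x'')=\tau_{L'}(x)$ for $L'\in\Lscr(x)\setminus\{L_0\}$.

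Next I would verify that the hypotheses of the corollary hold for $x''$ and the restricted family $\Lscr'':=\Lscr'\setminus\{L_0\}$ (if $\lambda(L_0)=\tau_{L_0}(x)$) or $\Lscr'$ itself with $\lambda$ adjusted at $L_0$ to $\lambda(L_0)\mapsto\lambda(L_0)$ unchanged but now viewed relative to $x''$ where $\tau_{L_0}(x'')=\tau_{L_0}(x)-\lambda(L_0)$ (if $\lambda(L_0)<\tau_{L_0}(x)$, again by~\refeq{l<t}). In either case each remaining $L\in\Lscr''$ satisfies $\lambda(L)\le\tau_L(x)=\tau_L(x'')$, so the induction hypothesis applies and yields that $x':=x''+\sum(\lambda(L)(\chi^{L^+}-\chi^{L^-})\colon L\in\Lscr'')$ is stable, that each $L\in\Lscr''$ with $\lambda(L)<\tau_L(x'')$ is a rotation for $x'$ with $\tau_L(x')=\tau_L(x'')-\lambda(L)=\tau_L(x)-\lambda(L)$, and that each $L'\in\Lscr(x'')\setminus\Lscr''$ is a rotation for $x'$ with unchanged maximal weight. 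Since $x'$ so defined equals $x+\sum(\lambda(L)(\chi^{L^+}-\chi^{L^-})\colon L\in\Lscr')$ and $\Lscr(x)\setminus\Lscr'\subseteq\Lscr(x'')\setminus\Lscr''$ with weights preserved at each reduction step, threading the equalities gives exactly the asserted conclusion for the pair $(x,\Lscr')$. The final sentence (``rotations can be applied in an arbitrary order'') then follows because the resulting $x'$ and all the maximal feasible weights depend only on the set $\Lscr'$ and the function $\lambda$, not on the order in which the single-rotation shifts are performed.

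The main obstacle is the interaction at a vertex $f\in F$ lying on several of the rotations in $\Lscr'$: a priori, shifting along $L_0$ could destroy a tandem of another rotation $L\in\Lscr''$ passing $f$, or alter $\tau_L$, which would break the clean ``$\tau_L$ unchanged'' claim used above. This is precisely where~\refeq{l<t} is doing the real work when $\lambda(L_0)<\tau_{L_0}(x)$ (it asserts $\Gamma(x'')=\Gamma(x)$, so \emph{all} tandems and all other rotations survive verbatim), and the subtle case is $\lambda(L_0)=\tau_{L_0}(x)$: here $\Gamma$ genuinely changes and one of the events (I)--(III) of~\refeq{l=t} occurs for $L_0$, but I would argue via Lemma~\ref{lm:acac} applied to the full collection of tandems of \emph{all} rotations in $\Lscr'$ at $f$ simultaneously that the event destroying $L_0$ affects only edges of $L_0$ (a $0$-valued or saturated edge of $L_0$, or a tandem of $L_0$), leaving the tandems of $L\in\Lscr''$ intact with their feasibility up to weight $\tau_L(x)$ unaffected — the key point being that the tandems of distinct rotations are edge-disjoint by construction of $\Gamma$ (cf.~\refeq{balanc}(b)), so the relevant applications of axioms (A1)--(A3) decouple. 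Carefully checking that the reasoning of Lemma~\ref{lm:acac} and of~\refeq{xmuDelta}--\refeq{a(i)interest} goes through with the larger (still edge-disjoint) family of tandems — rather than just those of a single rotation — is the one place where a genuine, if routine, computation is needed; everything else is a straightforward induction.
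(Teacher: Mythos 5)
Your strategy---induction on $|\Lscr'|$, peeling off one rotation at a time, invoking Proposition~\ref{pr:feas_mu} and~\refeq{l<t} for the sub-maximal case, and a multi-tandem extension of Lemma~\ref{lm:acac} in the spirit of~\refeq{xfmu1}--\refeq{a(i)interest} to handle shared $F$-vertices---is the natural one and matches what the paper implicitly intends (the paper offers no explicit proof, only the remark ``using Lemma~\ref{lm:acac}, one can conclude''). You also correctly locate the crux: tandems of distinct rotations at a common $f\in F$ are pairwise edge-disjoint by~\refeq{balanc}(b), so the relevant applications of (A1)--(A3) are on disjoint edge sets.

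However, there is a genuine gap at precisely the ``subtle case'' you flag, namely $\lambda(L_0)=\tau_{L_0}(x)$, and it is not as routine as you suggest. For $L\in\Lscr(x)\setminus\{L_0\}$, the edge-disjointness argument (and the multi-tandem variant of~\refeq{xfmu1}--\refeq{xmuDelta}) proves that every weight $\nu\le\tau_L(x)$ that was feasible for $L$ at $x$ remains feasible at $x'':=x+\tau_{L_0}(x)(\chi^{L_0^+}-\chi^{L_0^-})$, i.e.\ $\tau_L(x'')\ge\tau_L(x)$. But the corollary asserts \emph{equality}, and your induction needs the equality to thread $\tau_L(x')=\tau_L(x)-\lambda(L)$ through the steps. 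The direction $\tau_L(x'')\le\tau_L(x)$ does \emph{not} follow from edge-disjointness or from the ``decoupling'' of (A1)--(A3): shifting along $L_0$ lowers $x$ on the edges $c(i)$ of $L_0$'s tandems at a shared $f$, and a priori this could make a strictly larger increase on $L$'s tandem edge $a$ feasible at $x''_f$, because the choice function faces less competition from the $c(i)$'s. Concretely, the computation you allude to (join of $x_f+\tau_{L_0}(x)\Delta$ and $x_f+\nu'\onebf^a$, then (A2), (A3)) only pins down $w:=C_f(x_f+\nu'\onebf^a)$ off the set $\{a,c\}\cup\{c(i)\colon i\}$ and forces $w(a)=x_f(a)+\nu'$; nothing rules out that the decrement of size $\nu'$ required by $|w|=|x_f|$ is partly taken from some $c(i)$ rather than entirely from $c$, which is exactly the scenario in which $\nu'$ is infeasible for $L$ at $x$ yet feasible at $x''$. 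Note~\refeq{l<t} gives the $\tau$-equality only for a shift of weight \emph{strictly less} than $\tau_{L_0}(x)$; you cite it as if it covers the maximal-weight step as well, which it does not.

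A smaller omission: to conclude that $L$ is literally a rotation in $\Lscr(x'')$ (not merely that some residual shift along $L$ stays stable) one must also check that the $W$-admissible edge $a_w(x'')$ equals $a_w(x)$ for each $w\in V_L\cap W$. By~\refeq{balanc}(a), $L$ and $L_0$ share no $W$-vertex, so $x''_w=x_w$; still, the interestingness test in~\refeq{2prop}(b) is evaluated at the firm endpoint of each candidate edge, which \emph{may} lie on $L_0$, so one needs Lemma~\ref{lm:non-interest} to exclude a new, more preferred $W$-admissible edge appearing at $w$. Your sketch passes over this. In sum the proposal follows the intended route, but the $\le$ half of the $\tau$-equality (and, secondarily, the persistence of the $W$-admissible edges via Lemma~\ref{lm:non-interest}) must be supplied before the induction is complete.
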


Next we need one more notion.
 \medskip

\noindent\textbf{Definition 4.} Consider stable g-allocations $x\prec y$
(where, as before, $\prec=\prec_F$). Let $\Tscr$ be a sequence $x^1,\ldots,x^N$
of stable g-allocations such that $x^1=x$, $x^N=y$, and each $x^{i+1}$ ($1\le i<N$) is obtained from $x^i$ by shifting with a feasible weight $\lambda_i>0$ along a rotation
$L^i\in\Lscr(x^i)$. In particular, $x^1\prec\cdots\prec x^N$. We call $\Tscr$ a \emph{route} from $x$ to $y$, and may
liberally say that a rotation $L^i$ as above is \emph{used}, or
\emph{occurs}, in the route $\Tscr$. This route is called \emph{non-excessive}
if at each step of building $\Tscr$, the rotation used at this moment is taken with the maximal possible weight. Then $\lambda_i\le\tau_{L^i}(x^i)$ for all $i$, and the strict inequality is possible only if  $i=N-1$. A non-excessive route from $\xmin$ to $\xmax$ is called \emph{full}. (In this case the weight $\lambda_i$ of each rotation $L^i$ is maximal.)
 \medskip

Consider a non-excessive route $\Tscr=(x^1,\ldots, x^N)$ and let each $x^{i+1}$
be obtained from $x^i$ by shifting along a rotation $L^i$. We know that if a
vertex $w\in W$ is passed by a rotation $L^i$, then $x^i$ increases at the
$W$-admissible edge $a_w(x^i)$ and decreases at one edge $e\in E_w$ such
that $e>_w a_w(x^i)$. Furthermore, if $w$ is passed by a further rotation $L^j$
($j>i$), then the corresponding $W$-admissible edge at $w$ is either the same:
$a_w(x^j)=a_w(x^i)$, or less preferred: $a_w(x^j)<_w a_w(x^i)$. This implies
that, when moving along $\Tscr$, the values at each edge change in ``one-peak''
manner: they weakly increase, step by step, at a first phase, up to some
$x^i$,  and then weakly decrease at the second phase (each phase may be empty).
In particular, each edge becomes saturated at most once, and similarly it
becomes 0-valued at most once. Hence the total number of events I and II
(defined in~\refeq{l=t}) is at most $2|E|$.

Next we are interested in estimating the length $N$ of $\Tscr$. Based on the
above observations, this task reduces to estimating the number of consecutive
steps when an edge continues to be $W$-admissible (contained in current auxiliary graphs $D$). More precisely, for a fixed $e=wf\in F$, let
$L^{\alpha(1)},\ldots,L^{\alpha(k)}$, where $\alpha(1)<\cdots<\alpha(k)$, be
the sequence of rotations such that $e\in L^{\alpha(i)+}$; then $e$ is the
positive edge, $a$ say, in $E_w$ for $x^{\alpha(1)},\ldots, x^{\alpha(k)}$. Let
$(a,c^i)$ be the tandem containing this $a$ in $L^{\alpha(i)}$,
$i=1,\ldots,k$. Apriori for a general choice function $C_f$ (subject to
axioms (A1)--(A3)), it is possible that different elements among $c^1,\ldots,c^k$ can
be intermixed, in the sense that there are $i<j<p$ such that $c^i=c^p\ne c^j$.
For this reason, we cannot exclude the situation when one and the same rotation
(as a cycle) can appear in a non-excessive route more than once (possibly many
times), and as a result, the length of this route can be ``too large''. Such a behavior will be illustrated in the Appendix. (See also Remark~1 in the next section.)

We overcome this trouble by imposing an additional condition on each CF $C_f$,
$f\in F$. Here for $f\in F$, we refer to a pair $(a,c)$ in $E_f$ as an
(abstract) tandem for an admissible function $z\in\Ascr_f$ if $z(a)<b(a)$,
\;$z(c)>0$, and $C_f(z+\onebf^{a})=z+\onebf^{a}-\onebf^{c}$. The condition that
we wish to add, called the \emph{gapless condition}, reads as follows:
  \begin{itemize}
\item[(C):]
for each $f\in F$, if $z^1,z^2,z^3\in\Ascr_f$ and $a,c^1,c^2,c^3\in E_f$ are
such that: (i) $z^1\prec_f z^2\prec_f z^3$, (ii) $(a,c^i)$ is a tandem for
$z^i$, $i=1,2,3$, and (iii) $c^1=c^3$; then $c^1=c^2$.
  \end{itemize}

The simplest case of validity of (C) is when $C_f$ is generated by a linear
order on $E_f$ (i.e. we deal with the standard allocation model).
In this case, the relations $z^1\prec_f z^2\prec_f z^3$ imply $\ell_f(z^1)\le_f \ell_f(z^2)\le_f \ell_f(z^3)$, and for $i=1,2,3$, we have $a>_f\ell_f(z^i)$ and $c^i=\ell_f(z^i)$. Then $c^1=c^3$ implies $c^1=c^2=c^3$.

 \begin{theorem} \label{tm:condC}
Subject to the gapless condition, let $\Tscr=(x^1,\ldots,x^N)$ be a non-excessive
route. Then: {\rm(i)} any rotation occurs in $\Tscr$ at most once; and
{\rm(ii)} $|\Tscr|=O(|V| |E|^2)$.
  \end{theorem}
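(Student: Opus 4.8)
\medskip\noindent\textbf{Proof plan.}
The plan is to reduce both assertions to one structural fact, the \emph{interpolation property} for rotations, which is exactly the property announced in the Introduction and which the gapless condition~(C) is there to secure: \emph{if $x\prec_F y\prec_F z$ are stable g-allocations and a rotation $L$ lies in $\Lscr(x)\cap\Lscr(z)$, then $L\in\Lscr(y)$.} I would either quote this or establish it here; in the latter case the work is a firm‑local claim — if the positive edge $a=wf$ of $L$ is the leader of a tandem $(a,c)$ at both $x_f$ and $z_f$, then it is the leader of the \emph{same} tandem $(a,c)$ at $y_f$ (everything about the workers and the negative edges being routine). One argues there, in order, that $a$ stays interesting for $f$ under $y_f$ (else Lemma~\ref{lm:non-interest} applied along $y_f\prec_f z_f$ would contradict $a$ leading a tandem at $z_f$); that $C_f(y_f+\onebf^a)$ is a genuine reduction $y_f+\onebf^a-\onebf^{c'}$; and finally that~(C), applied to $x_f\prec_f y_f\prec_f z_f$ with partners $c,c',c$, forces $c'=c$.

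\medskip\noindent\textbf{Part (i).}
Suppose a rotation occurs twice in $\Tscr=(x^1,\dots,x^N)$, say $L=L^i=L^j$ with $i<j$. Then $i\le N-2$, so, $\Tscr$ being non‑excessive, step $i$ uses the maximal feasible weight $\tau:=\tau_L(x^i)\ge1$; hence $\tau+1$ is infeasible and, by~\refeq{l=t}, one of the events (I),(II),(III) occurs on $x^i\to x^{i+1}$. In each case even a single unit of further shift along $L$ starting from $x^{i+1}$ is impossible: (I) and (II) violate~\refeq{feas_shift}(a),(b), and (III) violates~\refeq{feas_shift}(c). So $L\notin\Lscr(x^{i+1})$; in particular $j\ne i+1$, whence $j\ge i+2$ and $x^i\prec_F x^{i+1}\prec_F x^j$ with both inequalities strict. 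But $L\in\Lscr(x^i)\cap\Lscr(x^j)$, so interpolation gives $L\in\Lscr(x^{i+1})$ — a contradiction. Hence every rotation of $\Tscr$ occurs exactly once.

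\medskip\noindent\textbf{Part (ii).}
By part (i) the $N-1$ rotations $L^1,\dots,L^{N-1}$ are distinct, so it suffices to bound their number. For $i\le N-2$ the weight at step $i$ is maximal, so one of (I),(II),(III) occurs at $x^i\to x^{i+1}$. By the one‑peak behaviour of edge values recalled before the theorem, along $\Tscr$ each edge becomes saturated from below at most once and drops to $0$ from above at most once, so (I) and (II) together account for at most $2|E|$ steps. For (III) I claim each ordered pair $(a,c)$ with $a,c\in E_f$, $a\ne c$, is destroyed at most once along $\Tscr$; granting this, at most $\sum_{f}|E_f|(|E_f|-1)\le|V|\,|E|$ steps carry (III) (using $|E_f|\le|V|$ and $\sum_f|E_f|=|E|$), so
$$
N\;\le\;|V|\,|E|+2|E|+O(1)\;=\;O(|V|\,|E|)\;=\;O(|V|\,|E|^2)
$$
(the last step since $|E|\ge|V|-1$). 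To prove the claim, suppose $(a,c)$ at $f$ is destroyed at steps $i<j$; then $a$ is a positive edge of $L^i$ and of $L^j$, $(a,c)$ is a tandem for $x^i_f$ and for $x^j_f$, and (as in part (i)) $x^i_f\prec_f x^{i+1}_f\prec_f x^j_f$. Since $C_f(x^{i+1}_f+\onebf^a)\le x^{i+1}_f+\onebf^a$ and $|C_f(x^{i+1}_f+\onebf^a)|\ge|x^{i+1}_f|$ by~(A3), the vector $C_f(x^{i+1}_f+\onebf^a)$ equals $x^{i+1}_f$, or $x^{i+1}_f+\onebf^a-\onebf^{e_0}$ for some $e_0\ne a$, or $x^{i+1}_f+\onebf^a$; and since $(a,c)$ was destroyed at step $i$, the sub‑case $e_0=c$ is impossible. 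If $C_f(x^{i+1}_f+\onebf^a)=x^{i+1}_f$, then $a$ is not interesting for $f$ under $x^{i+1}_f$, hence — by Lemma~\ref{lm:non-interest} along $x^{i+1}_f\prec_f x^j_f$, using $x^{i+1}(a)\le x^j(a)$ because $a$ is an increasing edge at both $i$ and $j$ — not interesting under $x^j_f$, contradicting that $(a,c)$ is a tandem there. If $C_f(x^{i+1}_f+\onebf^a)=x^{i+1}_f+\onebf^a-\onebf^{e_0}$ with $e_0\ne a,c$, then $(a,e_0)$ is a tandem for $x^{i+1}_f$, and~(C) applied to $x^i_f\prec_f x^{i+1}_f\prec_f x^j_f$ with partners $c,e_0,c$ forces $e_0=c$ — again impossible. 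The only remaining possibility, $C_f(x^{i+1}_f+\onebf^a)=x^{i+1}_f+\onebf^a$ ($a$ interesting at $x^{i+1}_f$ but opening no reduction), is the delicate one, and it is exactly what (C) — equivalently the interpolation property — serves to exclude.

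\medskip\noindent\textbf{The main obstacle.}
Everything thus hinges on the firm‑local core of the interpolation property, and the genuinely hard point is excluding, for $x_f\prec_f y_f\prec_f z_f$, the ``middle free increase'' configuration
$$
C_f(x_f+\onebf^a)=x_f+\onebf^a-\onebf^c,\qquad C_f(y_f+\onebf^a)=y_f+\onebf^a,\qquad C_f(z_f+\onebf^a)=z_f+\onebf^a-\onebf^c .
$$
This does not follow from (A1)--(A3) alone (for size‑monotone but not quota‑filling choice functions it is consistent with them), which is precisely why the gapless condition~(C) is imposed; so the heart of the argument is to extract from~(C) — together with Lemma~\ref{lm:non-interest}, size monotonicity, and the one‑peak dynamics along $\Tscr$ — that the leader of a destroyed tandem cannot recover that tandem at any later stable g-allocation. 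With that in hand, parts (i) and (ii) are the short arguments above, and the bound obtained is in fact $O(|V|\,|E|)$.
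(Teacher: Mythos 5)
Your plan follows the paper's route in substance: both for part~(i) and for the claim driving part~(ii), one shows via condition~(C) and Lemma~\ref{lm:non-interest} that a destroyed tandem $(a,c)$ cannot reappear at a later stable g-allocation in the route, and part~(ii) then just counts destruction events of types (I),(II),(III). Your global count of ordered pairs $(a,c)$ being destroyed at most once each actually yields the sharper bound $N=O(|V|\,|E|)$, which would improve on the paper's $O(|V|\,|E|^2)$ once the argument is complete.

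It is not complete, and you flag the gap yourself: the case $C_f(x^{i+1}_f+\onebf^{a})=x^{i+1}_f+\onebf^{a}$, where $a$ is interesting for $f$ but opens no tandem. Your remark that this ``is exactly what (C) serves to exclude'' is not right: (C) compares three functions \emph{each} of which carries a tandem $(a,c^i)$, and when the middle one has no tandem at $a$, (C) simply does not speak. Nor is ``the interpolation property'' available — that is what you are trying to prove. The free-increase case is excluded by a different and easier mechanism, special to stable g-allocations: unisizeness~\refeq{AG}(c), axiom~(A3) and stationarity~\refeq{plott}. In the relevant configuration ($a$ a positive edge of $L=L^i=L^k$ and $j:=i+1<k$) the one-peak behavior of edge values along a non-excessive route gives $x^j(a)\le x^k(a)$; hence $(x^j_f\vee x^k_f)+\onebf^{a}\ge x^j_f+\onebf^{a}$, and by stationarity together with $C_f(x^j_f\vee x^k_f)=x^k_f$ one gets $C_f\bigl((x^j_f\vee x^k_f)+\onebf^{a}\bigr)=C_f(x^k_f+\onebf^{a})=x^k_f+\onebf^{a}-\onebf^{c}$, of size $|x^k_f|=|x^j_f|$. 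If $C_f(x^j_f+\onebf^{a})=x^j_f+\onebf^{a}$, its size would be $|x^j_f|+1$, contradicting~(A3). (This is the same device the paper uses to kill case~(a) in the Claim inside the proof of Proposition~\ref{pr:xpy}.) So the missing step does not ``hinge'' on~(C) at all; it hinges on unisizeness, which is a property of $\Sscr$ and not of arbitrary acceptable functions — which is why your observation that the configuration is consistent with (A1)--(A3) for general $z,z'$, while true, is beside the point. With this inserted, your part~(i) agrees in substance with the paper's case analysis and your part~(ii) gives the $O(|V|\,|E|)$ bound.
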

  \begin{proof}
~Let each $x^{i+1}$ be obtained from $x^i$ by shifting with weight $\lambda_i$
along a rotation $L^i$. Suppose that $L^i=L^k=:L$ for some $i<k$. The case
$k=i+1$ is impossible (for in this case one can combine the shifts $x^i\mapsto
x^{i+1}$ and $x^{i+1}\mapsto x^{i+2}$ into one shift $x^i\mapsto x^{i+2}$ of
weight $\lambda_i+\lambda_{i+1}$ along $L$, contrary to the non-excessiveness
of $\Tscr$). So $j:=i+1$ satisfies $i<j<k$.

Consider an edge $a=wf\in L^+$. Then $a=a_w(x^i)=a_w(x^k)$ and $x^i(a)<
x^j(a)\le x^k(a)$, implying $0<x^j(a)<b(a)$ and $a=\ell_w(x^j)$ (the last edge
in $\supp(x^j)$). Let $(a,c)$ be the tandem in $L$ passing $f$. Apriori three
cases are possible, letting $C:=C_f$: (1) $a$ is interesting for $f$ under
$x^j$ and $C(x^j+\onebf^{a})=x^j+\onebf^{a}-\onebf^{c}$; ~(2) $a$ is
interesting for $f$ under $x^j$ and
$C(x^j+\onebf^{a})=x^j+\onebf^{a}-\onebf^{c'}$ for $c'\ne c$; and~(3) $a$ is
not interesting for $f$ under $x^j$.

One may assume that $a\in L^+$ is chosen so that (1) is not the case. For if
the behavior as in case~(1) would take place for all edges in $L^+$, then $L$ should be a rotation for $x^j=x^{i+1}$ as well. But then one can apply to $x^i$ the rotation $L^i$ with a weight greater than $\lambda_i$, obtaining a contradiction with the non-excessiveness of $\Tscr$.

Case (2) is impossible as well, as it violates condition~(C) (with
$(z^1,z^2,z^3)=(x^i_f,x^j_f,x^k_f)$, \;$c^1=c^3=c$ and $c^2=c'$).

And in case~(3), we have $x^j(a)=x^k(a)$, and Lemma~\ref{lm:non-interest} (with
$z=x_f^j$ and $z'=x_f^k$) leads to a contradiction.

Thus, $L^i=L^k$ with $i\ne k$ is impossible, yielding assertion~(i) in the
proposition. Now to see~(ii), let $\beta(1)<\cdots< \beta(M)$ be 
such that under the transformations $x^{\beta(r)}\mapsto x^{\beta(r)+1}$ (and
only these), at least one $W$-admissible edge $a_w$ ($w\in W$) changes or
vanishes. Then $M<|E|$. And for each $r=1,\ldots,M$, the rotations
$L^{\beta(r)},\, L^{\beta(r)+1},\ldots, L^{\beta(i+1)-1}$ have the same set of
positive edges; denote it as $L^+$. Condition~(C) implies that in this sequence
of rotations, for each $a=wf\in L^+$, the tandems at $f$ involving $a$ can be
changed  at most $|E_f|-1$ times. Since after applying each of these rotations,
at least one tandem (over all $f\in F$) is changed and there are at most $|W|$
positive edges $a_w$ in these rotations, we obtain $\beta(r+1)-\beta(r)<|W|
|E|$. This implies $|\Tscr|<|V| |E|^2$, as required.
  \end{proof}

%----------------------- Sec.5

\section{Poset of rotations} \label{sec:poset_rot}

In this section we throughout assume validity of the gapless condition~(C) and, relying on
results in the previous section, construct the poset of rotations and show an
isomorphism  between the lattices of stable g-allocations and closed functions
for this poset. The idea of construction goes back to~\cite{IL,ILG} and was
extended subsequently to more general models (see e.g.~\cite{DM,FZ,karz2}).

If $\Tscr$ is a full route (see Definition~4), then, by
Theorem~\ref{tm:condC}(i), each rotation is used in $\Tscr$ at most once. A
nice property is that the set of rotations along with their weights does not
depend on the full route. Originally, a similar invariance property was
revealed by Irving and Leather~\cite{IL} for rotations in the classical stable
marriage problem and subsequently was shown by a number of authors for more
general models of stability. We show the property of this sort for our case in
the next lemma, considering an arbitrary pair of comparable stable
g-allocations, not necessarily $(\xmin,\xmax)$. Here $\Rscr(\Tscr)$ denotes the
set of rotations used in a non-excessive route $\Tscr$ (as before), and we
write $\Pi(\Tscr)$ for the set of pairs $(L,\lambda)$, where $L\in\Rscr(\Tscr)$ and $\lambda$ is the weight of $L$ applied in $\Tscr$. As before, $\prec$ stands for $\prec_F$.

  \begin{lemma} \label{lm:invar_rot}
Let $x,y\in\Sscr$ and $x\prec y$. Then for all non-excessive routes $\Tscr$
going from $x$ to $y$, the sets $\Pi(\Tscr)$ of pairs $(L,\lambda)$ used in
$\Tscr$ are the same.
 \end{lemma}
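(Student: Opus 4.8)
The plan is to prove the invariance of $\Pi(\Tscr)$ by a ``commutativity plus exchange'' argument, following the standard template (Irving--Leather, and its later generalizations in \cite{DM,FZ,karz2}), adapted to our setting via Corollary~\ref{cor:commute}. Fix two non-excessive routes $\Tscr=(x=x^1,\ldots,x^N=y)$ and $\Tscr'=(x=y^1,\ldots,y^M=y)$. First I would record the key structural facts already available: by Theorem~\ref{tm:condC}(i) each rotation occurs at most once in a non-excessive route, so $\Pi(\Tscr)$ is genuinely a \emph{set} of pairs, and the rotation $L$ determines $\lambda$ uniquely within a given route; and by Corollary~\ref{cor:commute} the rotations applicable to a stable g-allocation commute and their maximal feasible weights behave additively, so the order of application is irrelevant. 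The goal reduces to: (a) $\Rscr(\Tscr)=\Rscr(\Tscr')$, and (b) the weight attached to a common rotation agrees.

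For (a), I would argue that the multiset $\sum_{(L,\lambda)\in\Pi(\Tscr)}\lambda(\chi^{L^+}-\chi^{L^-})$ equals $y-x$, which is route-independent; but this alone does not pin down the individual rotations, so the real work is a ``first divergence'' argument. Consider the first rotation $L^1\in\Lscr(x)$ used in $\Tscr$ with its maximal weight $\tau_{L^1}(x)$. I claim $L^1$ must be used somewhere in $\Tscr'$. Suppose not; then following $\Tscr'$ and using Proposition~\ref{pr:xpy} repeatedly (with $y^{j}\prec y=y^M$ and the fact that $x\prec_F y$), together with the observation that $L^1$ remains applicable until its ``blocking'' edge or tandem is affected, I would show that at the first moment $\Tscr'$ touches a vertex/edge of $L^1$ it is forced to apply exactly $L^1$ (this is where the $W$-admissible-edge analysis from Sect.~\SSEC{act_graph} and Lemma~\ref{lm:non-interest} re-enter: the candidate $W$-admissible edge and its associated tandem are determined by $x_v$, which has not changed on the relevant coordinates). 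Using Corollary~\ref{cor:commute} I may then move this occurrence of $L^1$ to the front of $\Tscr'$, reducing to the pair $(\Tscr\setminus L^1,\Tscr'\setminus L^1)$ of non-excessive routes from $x':=x+\tau_{L^1}(x)(\chi^{L^1+}-\chi^{L^1-})$ to $y$. Then induction on $|y-x|$ (a strictly decreasing nonnegative integer) finishes (a).

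For (b): once $L$ is common, its weight in either route equals $\tau_L(x_L)$ where $x_L$ is the g-allocation at which $L$ is applied; but after pushing $L$ to the front (as above) and using \refeq{l<t}/\refeq{l=t}, the weight is forced to be exactly the maximal feasible one relative to the ``base point'' reached by applying all rotations that must precede $L$ (those affecting a common edge/tandem), and this base point is route-independent by the commutation in Corollary~\ref{cor:commute}. Hence the weight is determined. I expect the main obstacle to be the ``forcing'' step in (a): showing that $\Tscr'$ cannot avoid $L^1$ and, when it first interacts with $L^1$'s support, must apply precisely $L^1$ rather than some other rotation sharing only part of that support. This requires carefully combining the edge-disjointness/balancedness properties \refeq{balanc} of the active graph, the tandem uniqueness enforced by condition~(C) via Theorem~\ref{tm:condC}, and Lemma~\ref{lm:non-interest} to control which edges stay (non)interesting along $\Tscr'$ before the divergence point; the commutation lemma then does the bookkeeping, but assembling these into a clean induction is the delicate part.
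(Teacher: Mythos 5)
The paper's proof and your proposal both hinge on Corollary~\ref{cor:commute}, but they use it in opposite directions. The paper runs a \emph{top-down} extremal argument: define a stable $x'$ with $x\preceq x'\preceq y$ to be ``bad'' if two non-excessive routes from $x'$ to $y$ have different $\Pi$-values, pick a bad $x'$ of \emph{maximal height} (so every $z$ with $x'\prec z\preceq y$ is already good), let $L,L'\in\Lscr(x')$ be the first rotations of two witnessing routes, and form the diamond $x'\to z\to z''$ and $x'\to z'\to z''$ via commutativity. Since $z,z'$ are good, the suffix $\Pi$-values from $z$ (resp.\ $z'$) to $y$ are unambiguous, and the two routes through $z''$ force $\Pi(\Tscr)=\Pi(\tilde\Tscr)=\Pi(\tilde\Tscr')=\Pi(\Tscr')$, a contradiction. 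Your proposal instead runs \emph{bottom-up} by induction on $|y-x|$, peeling off the first rotation $L^1$ of $\Tscr$ after arguing that $L^1$ must also occur somewhere in $\Tscr'$ and can be commuted to the front.

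The genuine gap in your proposal is exactly the step you flag as ``delicate'': showing that $\Tscr'$ cannot avoid $L^1$. Corollary~\ref{cor:commute} tells you that as long as $\Tscr'$ applies rotations other than $L^1$, the rotation $L^1$ \emph{remains applicable} with unchanged maximal weight at every intermediate state, so it is still applicable at $y$ — but this gives no contradiction, since $y$ need not be $\xmax$ and nothing forbids $\Lscr(y)\ne\emptyset$. Moreover, even if $L^1$ is applicable at some intermediate $y^j\preceq y$, you would still have to show that applying it keeps you below $y$; that is a join-type statement not directly available. The auxiliary arguments you gesture at (Prop.~\ref{pr:xpy}, the $W$-admissible-edge analysis, Lemma~\ref{lm:non-interest}) are not assembled into a proof of the forcing claim, and the claim itself is essentially equivalent to the lemma being proved, so the induction is circular as sketched. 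The paper's maximal-bad-element trick is precisely designed to avoid having to prove that a specific rotation must appear in a specific route: one only needs the much weaker fact that any two non-excessive routes from $x'$ begin with \emph{some} pair $L,L'\in\Lscr(x')$, and goodness of the one-step successors then does all the bookkeeping. If you want to keep your bottom-up structure, you would need to first prove (or cite) a separate ``exchange'' lemma asserting that a rotation applicable at $x$ that reaches a state $\preceq y$ after one step necessarily occurs in every non-excessive route from $x$ to $y$; but that lemma is not in the paper and would itself require the kind of argument the paper's proof bypasses.
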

 \begin{proof}
Let $\Xscr$
denote the set of stable g-allocations $x'$ such that $x\preceq x'\preceq y$.
By Proposition~\ref{pr:xpy}, for each $x'\in\Xscr$, the set of routes from $x'$ to $y$ is nonempty. Let us say that $x'\in\Xscr$
is \emph{bad} if there exist two non-excessive routes $\Tscr,\Tscr'$ from $x'$
to $y$ such that $\Pi(\Tscr)\ne \Pi(\Tscr')$, and \emph{good} otherwise. One
has to show that $x$ is good (when $y$ is fixed).

Suppose this is not so, and consider a bad g-allocation $x'\in\Xscr$ of maximal height, in the sense that each g-allocation $z\in\Xscr$ with $x'\prec z\preceq
y$ is already good. In any non-excessive route from $x'$ to $y$, the first
g-allocation $z$  after $x'$ is obtained from $x'$ by applying a certain
rotation in $\Lscr(x')$ taken with the maximal possible weight. Then the
maximal choice of $x'$ implies that there are two rotations $L,L'\in\Lscr(x')$
along with their maximal possible (relative to~$y$) weights $\lambda,\lambda'$ (respectively)
such that the g-allocations $z$ and $z'$ obtained from $x'$ by shifting with
weight $\lambda$ along $L$, and with weight $\lambda'$ along $L'$
(respectively) belong to $\Xscr$ and are good, but there is a route $\Tscr$ from $x'$ to $y$ passing
$z$ and a route $\Tscr'$ from $x'$ to $y$ passing $z'$ for which
$\Pi(\Tscr)\ne\Pi(\Tscr')$.

At the same time, the pairs $(L,\lambda)$ and $(L',\lambda')$ commute at $x'$
(cf. Corollary~\ref{cor:commute}), i.e. $L$ with weight $\lambda$ is applicable
to $z'$, and $L'$ with weight $\lambda'$ is applicable to $z$ (and such weights are maximal possible). It follows that there are two
non-excessive routes $\tilde\Tscr$ and $\tilde\Tscr'$ going from $x'$ to
$y$ such that $\tilde \Tscr$ begins with $x',z,z''$, and $\tilde\Tscr'$ begins
with $x',z',z''$, after which these routes coincide; here $z''$ is obtained
from $z$ by applying $L'$ with weight $\lambda'$, or, equivalently, obtained
from $z'$ by applying $L$ with weight $\lambda$. Then
$\Pi(\tilde\Tscr)=\Pi(\tilde\Tscr')$. Since both $z,z'$ are good, there must be
$\Pi(\tilde\Tscr)=\Pi(\Tscr)$ and $\Pi(\tilde\Tscr')=\Pi(\Tscr')$. But then
$\Pi(\Tscr)=\Pi(\Tscr')$; a contradiction.
 \end{proof}

In particular, all full routes $\Tscr$ use the same set of rotations $\Rscr(\Tscr)$; we denote this set by $\Rscr$ (so $\Rscr$ consists of all
possible rotations applicable to stable g-allocations). Also each rotation $L$
is taken in any full route with the same (maximal) weight; we denote it by
$\tau_L$. A method of comparing rotations, originally demonstrated
in~\cite{IL}, works in our case as well.
 \medskip

\noindent\textbf{Definition 5.} ~For rotations $L,L'\in\Rscr$, we write
$L\lessdot L'$ and say that $L$ \emph{precedes} $L'$ if in \emph{all} full routes, the
rotation $L$ is used \emph{earlier} than $L'$.
 \medskip

In particular, if $L\lessdot L'$, then $L$ and $L'$ cannot occur in the same
set $\Lscr(x)$ (equivalently, the same active graph $\Gamma(x)$) of a stable
g-allocation $x\in\Sscr$.

The relation $\lessdot$ is transitive and anti-symmetric; so it determines a
partial order on $\Rscr$, forming the \emph{poset of rotations} for $G$. It is convenient to think of it as a weighted
poset in which each element $L\in\Rscr$ is endowed with the weight $\tau_L$; we
denote this poset as $(\Rscr,\tau,\lessdot)$. There is a relationship between
the lattice of closed functions on this poset and the lattice of stable
g-allocations (somewhat in spirit of Birkhoff~\cite{birk}), giving a ``compact
representation'' for the latter lattice.
 \medskip

\noindent\textbf{Definition 6.} ~A function $\xi:\Rscr\to\Zset_+$ satisfying
$\xi(L)\le\tau_L$ for all $L\in\Rscr$ is called \emph{closed} if $L\lessdot L'$
and $\xi(L')>0$ imply $\xi(L)=\tau_L$. A closed function $\xi$ is called
\emph{fully closed} if for each $L\in\Rscr$, the value $\xi(L)$ is equal to 0
or $\tau_L$.
 \medskip

(When $\tau$ is all-unit, a closed function is equivalent to an ideal of the
poset $(\Rscr,\lessdot)$.)

To establish a relationship between closed functions and stable g-allocations, for
each $x\in\Sscr$, take a non-excessive route $\Tscr$ from $\xmin$ to $x$,
consider the set $\Pi(\Tscr)$ of pairs $(L,\lambda)$ (cf.
Lemma~\ref{lm:invar_rot})  and define
  \begin{numitem1} \label{eq:omegax}
$\omega(x)$ to be the functions $\xi\in\Zset_+^\Rscr$ such that
$\xi(L)=\lambda$ if $(L,\lambda)\in \Pi(\Tscr)$, and $\xi(L)=0$ if $L$ does not
occur in $\Rscr(\Tscr)$.
  \end{numitem1}

By Lemma~\ref{lm:invar_rot}, $\xi$ depends only on $x$ (rather than $\Tscr$).
Moreover, $\xi$ is closed. (To see this, consider rotations $L\lessdot L'$ with
$\xi(L')>0$. Then $L'\in \Lscr(x')$ for some $x'\in\Tscr$, and $L$ occurs in
$\Tscr$ as well (earlier than $L'$). The situation when $L$ were used in
$\Tscr$ with a weight $\lambda$ smaller than $\tau_L$ would imply that $L$ is
contained in the active graph of $x'$, whence $L\in \Lscr(x')$ and $L,L'$ are
incomparable. Thus $\lambda=\tau_L$.)

The converse relation, saying that each closed function $\xi$ is the image by
$\omega$ of some $x\in\Sscr$, is valid as well; moreover, the map $\omega$
gives a lattice isomorphism.

 \begin{theorem} \label{tm:omega}
The map $x\mapsto \omega(x)$ establishes an isomorphism between the lattice
$(\Sscr,\prec_F)$ of stable g-allocations and the lattice $(\Xi,<)$ of closed
functions for $(\Rscr,\tau,\lessdot)$.
  \end{theorem}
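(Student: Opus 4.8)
The plan is to show that $\omega$ is a bijection between $\Sscr$ and $\Xi$ whose inverse is also order preserving; since an order isomorphism between lattices is automatically a lattice isomorphism, and since $(\Xi,<)$ is a finite lattice --- the pointwise meet and join of two closed functions are again closed (for $L\lessdot L'$, $(\xi\wedge\xi')(L')>0$ forces $\xi(L'),\xi'(L')>0$, hence $\xi(L)=\tau_L=\xi'(L)$, and dually for $\vee$) --- this gives the theorem. That $\omega$ is well defined on $\Sscr$ and takes values in $\Xi$ was checked above, and reading a non-excessive route from $\xmin$ to $x$ step by step yields the recovery formula
\[
x=\xmin+\sum_{L\in\Rscr}\omega(x)(L)\,(\chi^{L^+}-\chi^{L^-})\qquad(x\in\Sscr),
\]
valid because, by Theorem~\ref{tm:condC}(i), each rotation occurs in a non-excessive route at most once. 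In particular $\omega$ is injective, and for any $\xi\in\Xi$ the same expression defines a candidate preimage $x_\xi$.

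The core of the argument --- and the step I expect to be the main obstacle --- is a local description of the active graph in terms of $\omega$: for every $x\in\Sscr$, writing $\xi=\omega(x)$,
\[
\Lscr(x)=\{L\in\Rscr:\ \xi(L)<\tau_L\ \text{and}\ \xi(L')=\tau_{L'}\ \text{for all}\ L'\lessdot L\},
\]
with $\tau_L(x)=\tau_L-\xi(L)$ for $L\in\Lscr(x)$, and such that shifting $x$ along $L\in\Lscr(x)$ with a feasible weight $\mu$ replaces $\xi$ by $\xi+\mu\,\onebf^{L}$, where $\onebf^L\in\Zset_+^\Rscr$ is the indicator of $L$. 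The inclusion ``$\subseteq$'', the weight formula, and the last clause are essentially the content of~\refeq{l<t}--\refeq{l=t} together with Corollary~\ref{cor:commute} (applying $L$ with a non-maximal weight preserves the active graph and decreases only $\tau_L$). The delicate inclusion is ``$\supseteq$'': I would use Lemma~\ref{lm:invar_rot} to know that a non-excessive route $\xmin\to x$ has used precisely the rotations $L'$ with $\xi(L')>0$, each with its maximal weight except possibly the last one; then, exploiting condition~(C), Lemma~\ref{lm:non-interest} and the ``one-peak'' behaviour of edge values along a route discussed in Sect.~\SEC{addit_prop}, I would show that as soon as every $\lessdot$-predecessor of $L$ has been saturated, the cycle $L$ lies in the current active graph, so it is a legitimate next rotation. (For $\xi=\mathbf 0$ this specializes to $\Lscr(\xmin)=\{\text{minimal elements of }(\Rscr,\lessdot)\}$, the familiar Irving--Leather fact.)

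Granting the local lemma, the remaining steps are routine. One proves, by induction on $\sum_L(\xi'(L)-\xi(L))$, the following: for closed $\xi\le\xi'$ with $x_\xi\in\Sscr$ and $\omega(x_\xi)=\xi$ (the base case $\xi=\mathbf 0$, $x_\xi=\xmin$ being clear), one has $x_{\xi'}\in\Sscr$, $\omega(x_{\xi'})=\xi'$, and $x_\xi\preceq x_{\xi'}$. Indeed, if $\xi\ne\xi'$, pick a $\lessdot$-minimal $L_0$ in $\{L:\xi'(L)>\xi(L)\}$; every $\lessdot$-predecessor $L'$ of $L_0$ satisfies $\xi(L')=\xi'(L')$, and $\xi'(L_0)>0$ forces (closedness of $\xi'$) $\xi'(L')=\tau_{L'}$, hence $\xi(L')=\tau_{L'}$ too, so by the local lemma $L_0\in\Lscr(x_\xi)$ with $\tau_{L_0}(x_\xi)=\tau_{L_0}-\xi(L_0)\ge\xi'(L_0)-\xi(L_0)\ge1$; shifting $x_\xi$ along $L_0$ with weight $\xi'(L_0)-\xi(L_0)$ gives, by Proposition~\ref{pr:feas_mu}, a stable g-allocation $x_{\xi''}\succ_F x_\xi$ with $\xi'':=\xi+(\xi'(L_0)-\xi(L_0))\onebf^{L_0}$, which is again closed and $\le\xi'$, and the inductive hypothesis applied to $\xi''\le\xi'$ completes the step. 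Taking $\xi=\mathbf 0$ proves that $\omega$ is surjective; taking arbitrary closed $\xi\le\xi'$ (now known to be $\omega$-images of $x_\xi,x_{\xi'}\in\Sscr$) shows that $\omega^{-1}$ is order preserving.

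Finally, to see that $\omega$ itself is order preserving, let $x\prec_F y$ in $\Sscr$. Iterating Proposition~\ref{pr:xpy} produces a route $x=z^1\to z^2\to\cdots\to z^r=y$ in which each $z^{i+1}$ is obtained from $z^i$ by shifting along a rotation $M^i\in\Lscr(z^i)$ with a positive feasible weight $\nu^i$; by the last clause of the local lemma $\omega(z^{i+1})=\omega(z^i)+\nu^i\onebf^{M^i}$, whence $\omega(y)=\omega(x)+\sum_{i=1}^{r-1}\nu^i\onebf^{M^i}\ge\omega(x)$ pointwise, i.e.\ $\omega(x)\le\omega(y)$. Thus $\omega$ is an order-preserving bijection with order-preserving inverse, hence a lattice isomorphism between $(\Sscr,\prec_F)$ and $(\Xi,<)$, as required.
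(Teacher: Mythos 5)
Your argument is correct in substance but follows a genuinely different route from the paper's. The paper proves directly that $\omega$ is a lattice \emph{homomorphism}: for $\alpha=x\curlywedge y$ and $\beta=x\curlyvee y$ it establishes $\omega(\alpha)=\omega(x)\wedge\omega(y)$ and $\omega(\beta)=\omega(x)\vee\omega(y)$, the crux being that when the canonical path construction of Proposition~\ref{pr:xpy} is run from $\alpha$ toward $x$ and toward $y$, the sets of workers where $\alpha$ differs from $x$, resp.\ from $y$, are \emph{disjoint}, so the rotation sets of the two routes are disjoint; injectivity falls out of that same disjointness, and surjectivity is proved by induction along a linear extension of $\supp(\xi)$. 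You instead prove that $\omega$ is a bijection with both $\omega$ and $\omega^{-1}$ order-preserving and invoke the general fact that an order isomorphism of lattices is a lattice isomorphism (legitimate, since you verify that $\Xi$ is closed under pointwise $\wedge,\vee$). Your organizing device --- the local lemma identifying $\Lscr(x)$ with the rotations $L$ such that $\omega(x)(L)<\tau_L$ and every $\lessdot$-predecessor is saturated, together with the update rule $\omega\mapsto\omega+\mu\,\onebf^L$ --- makes explicit the combinatorics that the paper compresses into ``one can show by induction'' in its surjectivity step; the $\supseteq$ half of that lemma and the paper's inductive step are left at a comparable level of sketch and need the same ingredients (Lemma~\ref{lm:invar_rot}, Corollary~\ref{cor:commute}, condition~(C), the one-peak behaviour of edge values along a route). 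Your injectivity argument via the recovery formula $x=\xmin+\sum_{L}\omega(x)(L)(\chi^{L^+}-\chi^{L^-})$, valid because each rotation occurs at most once in a non-excessive route by Theorem~\ref{tm:condC}(i), is cleaner than the paper's. What the paper's route buys is the homomorphism identity~\refeq{meet-join} as an explicit byproduct; what yours buys is a reusable characterization of $\Lscr(x)$ in terms of $\omega(x)$ and a tighter inductive loop.
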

  \begin{proof}
Consider stable g-allocations $x,y\in\Sscr$ and take their meet
$\alpha:=x\curlywedge y$ and join $\beta:=x\curlyvee y$ in the lattice
$(\Sscr,\prec_F)$. The core of the proof is to show that
  \begin{equation} \label{eq:meet-join}
\omega(x)\wedge \omega(y)=\omega(\alpha)\quad \mbox{and} \quad
\omega(x)\vee\omega(y)=\omega(\beta).
  \end{equation}
In other words, we assert that $\omega$ determines a homomorphism of the above
lattices.

To show the left equality in~\refeq{meet-join}, we consider the pairs
$\alpha\prec x$ and $\alpha\prec y$ and apply to each of them a method as in
the proof of Proposition~\ref{pr:xpy}. (One may assume that $\alpha\ne x,y$.)
Let $W_1$ ($W_2$) be the set of vertices $w\in W$ such that $\alpha_w\ne x_w$
(resp. $\alpha_w\ne y_w$). We assert that $W_1\cap W_2=\emptyset$.

To show this, choose an arbitrary initial vertex $w\in W_1$. Then
$\alpha_w\succ_w x_w$ (since $\alpha\prec_F x$).  By reasonings as in the proof
of Proposition~\ref{pr:xpy}, $\alpha$ has $W$-admissible edge
$a_w=a_w(\alpha)$ at $w$. Starting with $a_w$ and acting as in that proof, we
can construct the corresponding ``infinite'' path $P$ in the auxiliary graph
$D(\alpha)$ (so that the $W$- and $F$-admissible edges in $P$ alternate).
Extracting the first cycle appeared in $P$, we obtain a rotation
$L\in\Lscr(\alpha)$. Then the g-allocation $\alpha'$ obtained from $\alpha$ by
shifting with weight 1 along $L$ (i.e.
$\alpha':=\alpha+\chi^{L^+}-\chi^{L^-}$) is stable and satisfies
$\alpha\prec \alpha'\preceq x$.

Note that the path $P$ and rotation $L$ are constructed canonically; they are
determined by only the graph $D(\alpha)$ and the vertex $w$ (not
depending on $x$ in essence).

Suppose that $W_1\cap W_2\ne\emptyset$. Then taking a common vertex $w\in
W_1\cap W_2$, we would obtain the same rotation $L\in\Lscr(\alpha)$ for both
pairs $(\alpha,x)$ and $(\alpha,y)$. But then $\alpha'$ obtained (as above)
from $\alpha$ by shifting along $L$ satisfies $\alpha\prec\alpha'$ and
$\alpha'\preceq x,y$, whence $\alpha'\preceq x\curlywedge y=\alpha\prec
\alpha'$; a contradiction. Therefore, $W_1\cap W_2=\emptyset$.

Moving from $\alpha$ to $x$ in a similar way, we obtain a route $\Tscr_1$ from
$\alpha$ to $x$ such that for each $x'\in \Tscr_1$, the set of vertices
$w\in W$ with $ x'_w\ne x_w$ is a subset of $W_1$. And similarly for a route
$\Tscr_2$ from $\alpha$ to $y$ and the set $W_2$. It follows that
$\Rscr(\Tscr_1)\cap\Rscr(\Tscr_2)=\emptyset$. This implies the left equality
in~\refeq{meet-join}, taking into account that
$\omega(x)=\omega(\alpha)+\sum(\lambda\onebf^L \colon
(L,\lambda)\in\Pi(\Tscr_1))$ and $\omega(y)=\omega(\alpha)+\sum(\lambda\onebf^L
\colon (L,\lambda)\in\Pi(\Tscr_2))$, where $\onebf^L$ is the unit base vector
for $L$ in $\Rset^\Rscr$ (taking value 1 on $L$, and 0 otherwise). 

(For an arbitrary route $\Tscr$, we denote by $\Rscr(\Tscr)$ and $\Pi(\Tscr)$, respectively, the set of different rotations used in $\Tscr$, and the set of pairs $(L,\tau)$, where $L\in\Rscr(\Tscr)$ and $\tau$ is the total weight of (all occurrences of) $L$ in $\Tscr$.)

To see the right equality in~\refeq{meet-join}, consider a route $\Tscr_3$ from $x$ to $\beta$ and a route $\Tscr_4$ from $y$ to $\beta$, and form the concatenation $\Tscr$ of $\Tscr_1$ and $\Tscr_3$, and the concatenation $\Tscr'$ of $\Tscr_2$ and $\Tscr_4$ (both are routes from $\alpha$ to $\beta$). Then $\Pi(\Tscr)=\Pi(\Tscr')$ (by Lemma~\ref{lm:invar_rot}, taking into account that any route can be transformed into a non-excessive one preserving $\Rscr(\cdot)$ and $\Pi(\cdot)$). Also $r(\beta)-r(x)=r(y)-r(\alpha)$ and $r(\beta)-r(y)=r(x)-r(\alpha)$, where $r(x')$ denotes the rank (``distance'' from $\xmin$) of a stable g-allocation $x'$ in the distributive lattice $(\Sscr,\prec)$. These observations together with $\Rscr(\Tscr_1)\cap\Rscr(\Tscr_2)=\emptyset$ imply the equalities $\Pi(\Tscr_3)=\Pi(\Tscr_2)$ and $\Pi(\Tscr_4)=\Pi(\Tscr_1)$, whence the right equality in~\refeq{meet-join} easily follows.

Note also that from the proof of the left equality in~\refeq{meet-join}
one can conclude that the map $\omega$ is injective.

Finally, for a closed function $\xi$, consider the set (support)
$\Rscr_\xi:=\{L\in\Rscr\colon \xi(L)>0\}$ and let $L^1,\ldots,L^k$ be an order
on $\Rscr_\xi$ compatible with $\lessdot$. One can show by induction that the
sequence $\xmin=x^0,x^1,\ldots,x^k$, where
$x^i:=x^{i-1}+\xi(L^i)(\chi^{L^{i+}}-\chi^{L^{i-}})$, forms a correct
(non-excessive) route from $\xmin$ to $x=x^k$. Then $\omega(x)=\xi$, and
$\omega$ is surjective.

Thus, $\omega$ is bijective, and the result follows.
  \end{proof}
  
\noindent\textbf{Remark 1.} 
As is mentioned in Sect.~\SEC{addit_prop} (and will be shown in the Appendix), in a general case of SGAM (i.e. when the gapless condition~(C) is not imposed), the assertion as in~(i) of Theorem~\ref{tm:condC} may be wrong, namely, it is possible that a non-excessive route $\Tscr$ admits a rotation that occurs in $\Tscr$ several times. (Recall that $\Tscr$ is non-excessive if at each step of forming $\Tscr$ the current rotation (used at this moment) is applied with the maximal possible weight, except, possibly, for the last step.) In a general case, for a non-excessive route $\Tscr=(x^1,\ldots,x^N)$, let $\hat\Pi(\Tscr)$ denote the family of $N-1$ pairs $(L,\tau)$ determining the transformations $x^i\mapsto x^{i+1}$, i.e. such that $x^{i+1}=x^i+\tau(\chi^{L^+}-\chi^{L^-})$ (so $\hat\Pi(\Tscr)$ admits multiplications). Arguing in a spirit of the proof of Lemma~\ref{lm:invar_rot}, it is not difficult to obtain an analogous invariance property:
 \begin{numitem1} \label{eq:hatPi}
in a general case of SGAM, for any $x,y\in \Sscr$ with $x\prec y$, the family $\hat\Pi(\Tscr)$ is the same for all non-excessive routes from $x$ to $y$.
  \end{numitem1}
(Details of a proof are close to those for Lemma~\ref{lm:invar_rot} and we omit them here.) Note that $|\hat\Pi(\Tscr)|< \bmax|E|^2$, since the set of $W$-admissible edges changes (when moving through a route) at most $|E|$ times, and the number of consecutive steps when this set does not change is less than $\bmax|E|$. This will be used in Remark~3 in the Appendix.

%----------------------- Sec.6

\section{Efficient constructions} \label{sec:construct}

As before, we assume validity of condition~(C). Then the number of rotations $|\Rscr|$ is bounded by a
polynomial in the size of input graph $G$, by
Theorem~\ref{tm:condC}. By Lemma~\ref{lm:invar_rot}, in order to obtain all
rotations along with their maximal weights, it suffices to build, step by step,
an arbitrary full route. This procedure is rather straightforward when the initial g-allocation $\xmin$ (least preferred
for $F$) is known. However, the task of finding $\xmin$ itself is less trivial.
Another nontrivial task is to find the preceding relations $\lessdot$ on
rotations, which was defined implicitly (involving all full routes; see Definition~5 in Sect.~\SEC{poset_rot}).
In this section, we explain how to solve both problems efficiently. 
This gives an efficient method to construct the poset $(\Rscr,\tau,\lessdot)$.

%---------------------- SSEC 6.1

\subsection{Finding the preceding relation $\lessdot$ on rotations.}
\label{ssec:graph_H}

It suffices to construct the generating graph (viz. Hasse diagram)
$H=(\Rscr,\Escr)$ of the (unweighted) poset $(\Rscr,\lessdot)$. Here the set
$\Escr$ of directed edges is formed by the pairs $(L,L')$ in $\Rscr$ where $L$
\emph{immediately precedes} $L'$, i.e. $L\lessdot L'$ and no $L''\in\Rscr$
satisfies $L\lessdot L''\lessdot L'$. An efficient method to construct $H$ is
close to that developed for the boolean variant in~\cite[Sect.~5.1]{karz2}. It is
based on the following simple fact on the set of \emph{ideals} of
$(\Rscr,\lessdot)$ (i.e. subsets $I\subseteq\Rscr$ such that $L\lessdot L'$ and
$L'\in I$ imply $L\in I$).

  \begin{lemma} \label{lm:immed_preced}
For $L\in\Rscr$, let $\Imax_{-L}$ denote the maximal ideal of
$(\Rscr,\lessdot)$ not containing $L$. Let $I':=\Imax_{-L}\cup\{L\}$. Then $L$
immediately precedes a rotation $L'$ if and only if $L'$ is a minimal (w.r.t.
$\lessdot$) element in $\Rscr-I'$.
 \end{lemma}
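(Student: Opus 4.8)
The plan is to prove both directions of the equivalence directly from the definition of the Hasse diagram edge, using the characterization of $\Imax_{-L}$ as the maximal ideal avoiding $L$ and the fact that ideals of $(\Rscr,\lessdot)$ correspond (via $\omega$ and Theorem~\ref{tm:omega}) to stable g-allocations reachable along a full route, or more elementarily just using the order-theoretic structure of the poset $(\Rscr,\lessdot)$. First I would record two easy observations about $I':=\Imax_{-L}\cup\{L\}$: namely that $I'$ is itself an ideal (since $\Imax_{-L}$ is an ideal and $L$ has all its $\lessdot$-predecessors already in $\Imax_{-L}$ — otherwise some predecessor $L_0\lessdot L$ would lie outside $\Imax_{-L}$, but then $\Imax_{-L}\cup\{L_0\}$ would be a larger ideal still avoiding $L$, as $L\not\lessdot L_0$ by antisymmetry, contradicting maximality), and that $\Imax_{-L}$ consists of exactly those $L''$ with $L''\lessdot L$ together with all $L''$ incomparable to $L$; equivalently, $\Rscr-I'$ consists precisely of the rotations $L'$ with $L\lessdot L'$.

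This last identity is really the heart of the matter, so I would isolate it as the key step: $\Rscr - I' = \{L'\in\Rscr : L\lessdot L'\}$. The inclusion $\supseteq$ is clear, since if $L\lessdot L'$ then $L'\ne L$ and $L'\notin\Imax_{-L}$ (an ideal containing $L'$ must contain its predecessor $L$). For $\subseteq$, suppose $L'\in\Rscr-I'$, so $L'\ne L$ and $L'\notin\Imax_{-L}$. If $L$ and $L'$ were incomparable, or if $L'\lessdot L$, then the down-closure of $\Imax_{-L}\cup\{L'\}$ in $(\Rscr,\lessdot)$ would be an ideal strictly containing $\Imax_{-L}$ and — because any predecessor of $L'$ is either a predecessor of $L$ (hence already in $\Imax_{-L}$) or incomparable-to/less-than $L$ in a way that does not force $L$ into the set — would still avoid $L$, contradicting the maximality of $\Imax_{-L}$. (Here I would be careful: the cleanest way to see that adding $L'$ and down-closing does not drag in $L$ is to note $L\not\lessdot L'$ in these two cases, and $L$ is not below any other predecessor of $L'$ either, since $\lessdot$ is transitive; I would spell this out precisely.) Hence $L\lessdot L'$, proving the identity.

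Given the identity, the lemma follows quickly. ($\Rightarrow$) If $L$ immediately precedes $L'$, then $L\lessdot L'$, so $L'\in\Rscr-I'$; and any $L''$ with $L\lessdot L''\lessdot L'$ would also lie in $\Rscr-I'$ and would be strictly $\lessdot$-below $L'$, so the nonexistence of such $L''$ says exactly that $L'$ is minimal in $\Rscr-I'$. ($\Leftarrow$) If $L'$ is minimal in $\Rscr-I'$, then by the identity $L\lessdot L'$; and if some $L''$ satisfied $L\lessdot L''\lessdot L'$, then again $L''\in\Rscr-I'$ and $L''\lessdot L'$ with $L''\ne L'$, contradicting minimality of $L'$. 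Hence $L$ immediately precedes $L'$.

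I expect the only genuinely delicate point to be the careful verification that $I'$ is an ideal and that the down-closure of $\Imax_{-L}\cup\{L'\}$ avoids $L$ in the two cases ($L'\lessdot L$ or $L',L$ incomparable) — everything there rests on transitivity and antisymmetry of $\lessdot$, but one must phrase it so as not to accidentally assume more structure than the poset axioms give. If one prefers a less purely combinatorial argument, an alternative route is available: translate ideals into stable g-allocations via $\omega$ and Theorem~\ref{tm:omega}, so $\Imax_{-L}$ corresponds to the $\prec_F$-maximal stable g-allocation $x$ whose $\omega$-image does not saturate $L$, i.e. the last g-allocation along full routes just before $L$ becomes applicable; then $L\in\Lscr(x)$, and $\Rscr-I'$ is realized by rotations applicable after shifting along $L$ with weight $\tau_L$, which by Corollary~\ref{cor:commute} and Definition~5 are exactly the $L'$ with $L\lessdot L'$. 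I would present the combinatorial proof as the main one and perhaps mention this lattice-theoretic reading as a remark.
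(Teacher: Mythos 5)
Your proof is correct and follows essentially the same route as the paper's: both rest on the observation that $\Imax_{-L}$ is the complement of the principal filter $\Phi_L=\{L''\colon L''=L\text{ or }L\lessdot L''\}$, so that $\Rscr-I'=\{L'\colon L\lessdot L'\}$, after which the equivalence with immediate precedence is immediate. The paper states this complement identity as ``clear,'' whereas you spell out the maximality/down-closure argument; that extra detail is sound and harmless, just more verbose than the original.
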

 \begin{proof}
Clearly $\Imax_{-L}$ is the complement to $\Rscr$ of the set (``filter'')
$\Phi_L$ formed by the rotations greater than or equal to $L$. Then
the minimal elements of $\Phi_L-\{L\}$ are exactly the rotations $L'$ for which
$L$ is immediately preceding. These $L'$ just form the set of minimal elements
not contained in the ideal $I'$.
 \end{proof}

For $L\in\Rscr$, let $\Iscr^+_L$ denote the set of rotations immediately
succeeding  $L$. Based on Theorem~\ref{tm:omega} and
Lemma~\ref{lm:immed_preced}, we can construct the set $\Iscr^+_L$ of rotations
along with their maximal weights as follows (provided that the g-allocation
$\xmin$ is known).
  \medskip

\noindent\textbf{Constructing $\Iscr^+_L$.} This procedure consists of two
stages. At the \emph{first} stage, starting with $\xmin$, we build, step by
step, a non-excessive route $\Tscr$ using all possible rotations except for
$L$. Namely, at each step, for the current g-allocation $x$, we construct the
set $\Lscr(x)$ of rotations, choose an arbitrary rotation $L'\in\Lscr(x)$
different from $L$, compute its maximal possible weight $\tau_{L'}$ (as
described in procedure~(P) in Sect.~\SEC{addit_prop}) and shift $x$ with the
weight $\tau_{L'}$ along $L'$, obtaining a new current g-allocation. And so on.
In case $\Lscr(x)=\{L\}$, the first stage of the procedure finishes.

The \emph{second} stage consists in shifting the final g-allocation $x$ of
the first stage along $L$ taken with weight~$\tau_L$, obtaining a new
g-allocation $x'$. Constructing the set $\Lscr(x')$ of rotations generated by
$x'$, we just obtain the desired set $\Iscr^+_L$.
  \medskip

(Indeed, the g-allocation $x$ obtained upon termination of the first stage
corresponds to the ideal $I=\Imax_{-L}$, i.e. $\omega(x)=I$. The coincidence of
$\Iscr^+_L$ with $\Lscr(x')$ is obvious.)

Next we briefly discuss computational complexity aspects, relying on reasonings in
Sect.~\SEC{addit_prop}. A ``straightforward'' algorithm using the above
procedure to construct $H=(\Rscr,\Escr)$  consists of $|\Rscr|$ \emph{big
iterations}, each handling a new rotation $L$ in the list of all rotations.
(There is no need to form this list in advance, as it can be formed/updated in
parallel with carrying out big iterations. The list is initiated with the set
$\Lscr(\xmin)$.) Each big iteration solves the following task:
  \begin{itemize}
\item[(T):]
given $x\in\Sscr$, construct the active graph $\Gamma(x)$, decompose it into
rotations, obtaining the set $\Lscr(x)$, and compute the maximal feasible
weight $\tau_L$ for each $L\in \Lscr(x)$.
  \end{itemize}

To obtain $\Gamma(x)$, we first construct the auxiliary graph $D(x)$ by
scanning the vertices $w\in W$. For each $w$, we examine the edges $wf\in E_w$
by decreasing their preferences $>_w$, starting with the last edge $\ell_w(x)$
in $\supp(x_w)$. For each edge $e=wf$, we decide whether it is interesting for
$f$ under $x$ or not (by computing $C_f(x_f+\onebf^e)(e)$ and comparing it with
$x_f(e)$), and the first interesting edge (if exists) is output as the
$W$-admissible edge $a_w(x)$. Along the way, the $F$-admissible edges and tandems
are constructed. This gives the graph $D(x)$. The Cleaning procedure applied 
to $D(x)$ and a procedure of decomposing the obtained active graph $\Gamma(x)$ into rotations are routine and can be performed in time $O(|E|)$. Also for each rotations $L\in\Lscr(x)$, we use procedure~(P) to compute the maximal feasible weight $\tau_L$. In view of~\refeq{log_iter}, we obtain the following:
 \begin{numitem1} \label{eq:solvT}
task (T) is solvable in weakly polynomial time; it takes $O(|E|
|V|\log{\bmax})$ oracle calls and other (usual) operations.
 \end{numitem1}
(We assume that each choice function $C_f$ is given via  an ``oracle'' that,
being asked of a function $z\in \Bscr_f$, outputs $C_f(z)$, which takes one ``oracle call''.)

A routine method to construct the set $\Iscr^+_L$ for a fixed $L$
reduces to solving $|\Rscr|=O(|V| |E|^2)$ instances of  task (T). This gives a weakly
polynomial time bound, in view of~\refeq{solvT}. After
constructing the sets $\Iscr^+_L$ over all $L\in\Rscr$, we obtain the desired
set $\Escr$. This results in the following

  \begin{prop} \label{pr:time_for_H}
When $\xmin$ is available, the set $\Rscr$ of rotations $L$ along with their
maximal feasible weights $\tau_L$, and the minimal generating graph
$H=(\Rscr,\Escr)$ for the poset $(\Rscr,\lessdot)$ can be constructed in weakly
polynomial time, viewed as \,$\log{\bmax}$ times a polynomial in $|V|,|E|$.
 \end{prop}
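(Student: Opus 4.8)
The plan is to reduce the whole construction to repeatedly solving task~(T) along carefully chosen non-excessive routes, using only results already established. First I would build a single full route $\Tscr$ starting from $\xmin$: at the current g-allocation $x$, solve task~(T) to obtain $\Lscr(x)$ together with the maximal feasible weights, pick any $L\in\Lscr(x)$, and shift $x$ with weight $\tau_L(x)$ along $L$; stop when $\Lscr(x)=\emptyset$, which (by Proposition~\ref{pr:xpy}) happens exactly at $\xmax$. By Lemma~\ref{lm:invar_rot} and the discussion following it, the rotations encountered along $\Tscr$ are precisely the members of $\Rscr$, each paired with its invariant maximal weight $\tau_L$, so this step already produces $\Rscr$ and $\tau$. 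Then, to obtain $H=(\Rscr,\Escr)$, I would compute for each $L\in\Rscr$ the set $\Iscr^+_L$ of rotations immediately succeeding $L$, by running the two-stage procedure described just before the statement, and finally set $\Escr:=\bigcup_{L\in\Rscr}\{L\}\times\Iscr^+_L$.

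The substantive point is the correctness of that two-stage procedure. In the first stage one builds, from $\xmin$, a non-excessive route that always uses a rotation other than $L$, each with its maximal weight, stopping when $\Lscr(x)=\{L\}$. Termination within $|\Rscr|$ steps is immediate from Theorem~\ref{tm:condC}(i). Two claims are then needed. \textbf{(a)} During this stage no rotation $L'$ with $L\lessdot L'$ ever becomes applicable: otherwise, applying such an $L'$ and then continuing to $\xmax$ (possible by iterating Proposition~\ref{pr:xpy}) would yield a full route which, since every full route uses all of $\Rscr$, employs $L$ somewhere after $L'$, contradicting $L\lessdot L'$. Hence the stage uses only rotations of the ideal $\Imax_{-L}=\Rscr\setminus\Phi_L$, where $\Phi_L=\{L\}\cup\{L'\in\Rscr\colon L\lessdot L'\}$. \textbf{(b)} Since every weight used is maximal, the image $\omega(x)$ of the g-allocation $x$ reached at the end is the ``$0$/$\tau$'' (fully closed) function supported on the set of rotations used; this set is an ideal not containing $L$, and it is the \emph{maximal} such ideal, for if an ideal $I\not\ni L$ strictly contained it, a $\lessdot$-minimal element $L''$ of $I$ outside it would have all its predecessors already applied with maximal weight, so $L''\in\Lscr(x)$ with $L''\ne L$, contradicting the stopping rule. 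Thus $\omega(x)=\Imax_{-L}$. The second stage shifts $x$ along $L$ with weight $\tau_L$, producing $x'$ with $\omega(x')=\Imax_{-L}\cup\{L\}=:I'$ (a routine check shows $I'$ is closed); by Theorem~\ref{tm:omega}, $\Lscr(x')$ is the set of $\lessdot$-minimal elements of $\Rscr\setminus\omega(x')=\Rscr\setminus I'$, which by Lemma~\ref{lm:immed_preced} is exactly $\Iscr^+_L$.

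For the running time I would combine three facts: $|\Rscr|=O(|V|\,|E|^2)$ by Theorem~\ref{tm:condC}; every non-excessive route occurring in the procedure has $O(|V|\,|E|^2)$ steps by Theorem~\ref{tm:condC}(ii); and one step — a single instance of task~(T) together with one shift — costs $O(|E|\,|V|\log\bmax)$ oracle calls and other operations, by~\refeq{solvT} and~\refeq{log_iter}. Hence building $\Rscr$ and $\tau$ via the initial full route costs $O(|V|\,|E|^2)$ times $O(|E|\,|V|\log\bmax)$, i.e.\ $\log\bmax$ times a polynomial in $|V|,|E|$. Computing one set $\Iscr^+_L$ amounts to one such route plus one extra step, again $\log\bmax$ times a polynomial; and there are $O(|V|\,|E|^2)$ rotations $L$ to process, and then $O(|\Rscr|^2)$ further bookkeeping to assemble $\Escr$. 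Summing, the entire construction of $\Rscr$, $\tau$ and $H$ runs in $\log\bmax$ times a polynomial in $|V|,|E|$, i.e.\ in weakly polynomial time, as claimed.

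The step I expect to be the main obstacle is the pair of claims (a)--(b): verifying that greedily avoiding $L$ traces out exactly the ideal $\Imax_{-L}$, neither stalling at a smaller ideal nor slipping into the filter $\Phi_L$ above $L$. Everything else is bookkeeping on top of the already-available facts that task~(T) is weakly polynomial, that non-excessive routes are short under condition~(C), and that $\omega$ is a lattice isomorphism.
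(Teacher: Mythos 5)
Your proof is correct and follows essentially the same approach as the paper: build a full route from $\xmin$ to collect $\Rscr$ and $\tau$, then for each $L$ run the two-stage procedure (greedily avoid $L$ until $\Lscr(x)=\{L\}$, then apply $L$) and read off $\Iscr^+_L=\Lscr(x')$ via Lemma~\ref{lm:immed_preced}, summing costs using~\refeq{solvT} and the bound $|\Rscr|=O(|V||E|^2)$. Your claims (a) and (b) supply a more careful justification of the key fact $\omega(x)=\Imax_{-L}$, which the paper asserts with only a brief parenthetical, so your write-up is slightly more detailed but not materially different in route.
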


Note that in the above ``naive'' method all sets $\Iscr^+_L$ are computed
independently, without storing intermediate data for further needs. In fact,
one can accelerate this method, at least by a factor of $|E|$, by acting in
spirit of an improved version for the boolean variant in~\cite[Sect.~5.1]{karz2}).
We omit details here and restrict ourselves with merely declaring the weakly
polynomial solvability.

%---------------------- SSEC 6.2

\subsection{Finding the initial stable g-allocation $\xmin$.} \label{ssec:xmin}

In order to construct $\xmin$, we can rely on a method in~\cite[Sec.~3.1]{AG}
intended for a general stability model on bipartite graphs considered there. Below we
outline that method regarding our model SGAM. This gives a
\emph{pseudo polynomial} algorithm for finding $\xmin$. Then we elaborate an
alternative approach to obtain an efficient, \emph{weakly polynomial}
algorithm. As a result, in view of Proposition~\ref{pr:time_for_H}, we obtain an
efficient algorithm to construct the poset $(\Rscr,\tau,\lessdot)$.

Acting as in~\cite{AG}, we iteratively construct triples $(b^i,x^i,y^i)$ of
functions on $E$, $i=0,1,\ldots,i,\ldots$. Initially put $b^0:=b$. In the
input of a current, $i$-th, iteration, there is a function $b^i\in \Zset_+^E$
(already known). The iteration consists of two stages.
 \smallskip

At the \emph{1st stage} of $i$-th iteration, $b^i$ is transformed into
$x^i\in\Zset_+^E$ by applying the CF $C_w$ to each restriction
$b^i_w={b^i}\rest{E_w}$ ($w\in W$), i.e. $x^i_w:=C_w(b^i_w)$ for all $w\in W$.
In other words, in our model with linear preferences $>_w$ and quotas $q(w)$
for $w\in W$, $x^i_w$ is obtained according to~\refeq{C_w} (with $b^i_w$ in
place of $z$). Then $x^i$ respects the quotas for all vertices of the part $W$.
 \smallskip

At the \emph{2nd stage} of $i$-th iteration, $x^i$ is transformed into $y^i\in
\Zset_+^E$ by applying the CF $C_f$ to each restriction $x^i_f=
{x^i}\rest{E_f}$ ($f\in F$), i.e. $y^i_f:=C_f(x^i_f)$ for all $f\in F$. Then
$y^i$ is acceptable for all vertices; however, it need not be stable.

The obtained functions $x^i,y^i$ are then used to generate the input function
$b^{i+1}$ for the next iteration. This is performed by the following rule:
  \begin{numitem1} \label{eq:Bi+1}
~for $e\in E$, (a) if $y^i(e)=x^i(e)$, then $b^{i+1}(e):=b^i(e)$; and (b) if
$y^i(e)<x^i(e)$, then $b^{i+1}(e):=y^i(e)$.
  \end{numitem1}

Then $b^{i+1}$ is transformed into $x^{i+1}$ and further into $y^{i+1}$ as described above. And so on. The process terminates when at the current, $p$-th say, iteration,
we obtain the equality $y^p=x^p$ (equivalently, $b^{p+1}=b^p$). Obviously,
$b^0> b^1>\cdots>b^p$, which implies that the process is finite and the number
of iterations does not exceed $|E|\,\bmax$. By a result in~\cite{AG}, the
following takes place.
 \begin{prop} \label{pr:Xp}
In the above algorithm, the final g-allocation $x^p$ is stable and optimal for $W$, i.e. $x^p=\xmin$.
 \end{prop}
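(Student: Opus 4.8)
The plan is to verify two things about the output $x^p$ of the iterative algorithm: first, that $x^p$ is a stable g-allocation; second, that it is the minimal element $\xmin$ of $(\Sscr,\prec_F)$. Throughout I would keep track of the invariant that every admissible (hence stable) g-allocation stays ``trapped above'' the functions $y^i$ on the edges where $y^i$ fell below $x^i$, which is exactly what the update rule \refeq{Bi+1} is designed to enforce.

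First I would establish the key monotonicity/trapping invariant: for every $i$ and every stable g-allocation $z\in\Sscr$, one has $z\le b^i$, and moreover $z(e)\le y^i(e)$ whenever $y^i(e)<x^i(e)$. The base case $z\le b^0=b$ is immediate. For the inductive step, suppose $z\le b^i$. At the 1st stage, $x^i_w=C_w(b^i_w)$; since $z_w\le b^i_w$ and $z_w$ is acceptable with $|z_w|\le q(w)$, the structure of $C_w$ in \refeq{C_w} (take as many most-preferred edges as the quota allows) forces $z(e)\le x^i(e)$ on the last edge of $\supp(x^i_w)$ and on all edges more preferred, while on less preferred edges $x^i$ vanishes. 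The point to extract is: if $y^i(e)<x^i(e)=b^i(e)$ at some edge $e=wf$, then $e\le_w\ell_w(b^i_w)$; were $z(e)>y^i(e)$, the edge $e$ would be interesting for $w$ under $z$ (since $e$ is unsaturated relative to the value $z$ could be raised to, namely $y^i(e)+1\le x^i(e)\le b^i(e)$, and $e$ is among the top edges up to the last one in $\supp$). At the 2nd stage, $y^i_f=C_f(x^i_f)$; by substitutability (A2) applied to $x^i_f\ge z_f\wedge x^i_f$, together with consistence, one shows that on an edge $e$ with $y^i(e)<x^i(e)$ the edge $e$ would also be interesting for $f$ under $z$ — because raising $z$ at $e$ up to $x^i(e)$ (which is $\le b(e)$) still gets ``chosen off'' by $C_f$ in the sense of Lemma~\ref{lm:e_interest}. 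If $z(e)>y^i(e)$, then $e$ is interesting for both $w$ and $f$ under $z$, so $e$ blocks $z$, contradicting stability. Hence $z(e)\le y^i(e)$, and consequently $z\le b^{i+1}$ by \refeq{Bi+1}. This gives the invariant, and in particular $z\le b^p$ for all $z\in\Sscr$.

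Second, I would show $x^p\in\Sscr$. Since the process stopped, $y^p=x^p$, so $x^p_w=C_w(b^p_w)$ is acceptable for every $w$ (and respects $q(w)$), and $x^p_f=C_f(x^p_f)$ is acceptable for every $f$; thus $x^p\in\Ascr$. For stability, suppose some edge $e=wf$ blocks $x^p$. Then $e$ is interesting for $w$ under $x^p_w$: since $x^p_w=C_w(b^p_w)$, an edge more preferred than $\ell_w(x^p_w)$ already has $x^p$-value equal to $b^p(e)$; and the only way $e$ can be interesting for $w$ is that $e\le_w\ell_w(x^p_w)$ and $e$ is unsaturated with respect to $b^p$, i.e. $x^p(e)<b^p(e)$. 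But then $y^p(e)=x^p(e)<b^p(e)=b^{p+1}(e)$ forces, by the update rule, $b^{p+1}(e)=y^p(e)<b^p(e)$, contradicting $b^{p+1}=b^p$. (Here I am using that $y^p=x^p$ and the stopping condition $b^{p+1}=b^p$ leave no room for $x^p(e)<b^p(e)$ at an edge interesting for $w$; one has to check the boundary case $x^p(e)=b^p(e)$ separately and see that then $e$ cannot be interesting for $w$, using the form of $C_w$.) Hence no blocking edge exists and $x^p\in\Sscr$.

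Finally, combining: by the invariant every $z\in\Sscr$ satisfies $z\le b^p$; applying $C_w$ to $b^p_w$ and using, as in the first step, that $z_w\le b^p_w$ with $z_w$ acceptable and $|z_w|\le q(w)$ gives $z_w\preceq_w x^p_w$ in the order \refeq{z_succ_zp}, i.e. $x^p_w\succeq_w z_w$ for all $w\in W$. By polarity \refeq{AG}(b), this is equivalent to $x^p_f\preceq_f z_f$ for all $f\in F$, i.e. $x^p\prec_F z$ (or $x^p=z$). Since this holds for every $z\in\Sscr$, $x^p$ is the minimum of $(\Sscr,\prec_F)$, that is $x^p=\xmin$.

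The main obstacle is the second-stage half of the trapping invariant: showing that when $C_f$ pulls a value down ($y^i(e)<x^i(e)$), the edge $e$ is genuinely \emph{interesting} for $f$ under any stable $z$ lying above $y^i(e)$ there. This requires carefully combining substitutability (A2), consistence (A1), size monotonicity (A3), and the characterization of interesting edges in Lemma~\ref{lm:e_interest}, applied not to $z_f$ directly but to a comparison between $x^i_f$, $z_f$, and their meet/join — essentially replaying the kind of argument used in the proof of Proposition~\ref{pr:xpy}. Everything else is bookkeeping with the explicit form \refeq{C_w} of $C_w$ and the update rule \refeq{Bi+1}.
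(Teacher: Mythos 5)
The paper does not actually supply a proof here: it cites Theorems~1--2 of Alkan--Gale for the statement, noting only that finiteness is automatic in the integer setting. Your proposal is a from-scratch reconstruction of that argument along standard deferred-acceptance lines (trapping invariant $z\le b^i$ for every stable $z$; stability of the fixed point $x^p$; then polarity to get $W$-optimality). The overall plan is the right one.

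However, the key step of your trapping invariant is wrong as stated. You claim that if $y^i(e)<x^i(e)$ and a stable $z\le b^i$ has $z(e)>y^i(e)$, then $e=wf$ is interesting for $f$ under $z_f$, so $e$ itself blocks $z$. This has the intuition backwards: $y^i(e)<x^i(e)$ means $C_f$ \emph{rejected} part of $e$, so $f$ typically does not want more of $e$. A concrete counterexample: $E_f=\{e_0,e_1\}$ with quota $q(f)=1$, $b\equiv 2$, $C_f$ the linear-order choice with $e_1>_f e_0$, $x^i_f=(1,1)$, $y^i_f=(0,1)$, $z_f=(1,0)$; then $z(e_0)=1>0=y^i(e_0)$, yet $C_f(z_f+\onebf^{e_0})=(1,0)=z_f$, so $e_0$ is \emph{not} interesting for $f$. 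The correct argument must produce a \emph{different} blocking edge at $f$: set $v:=z_f\vee x^i_f$; substitutability gives $C_f(v)(e)\le y^i(e)<z_f(e)$, and size monotonicity gives $|C_f(v)|\ge|z_f|$, so there is $e'\in E_f-\{e\}$ with $C_f(v)(e')>z_f(e')$; consistence then shows $C_f(v)\succ_f z_f$, so $e'$ is interesting for $f$ under $z_f$ (Lemma~\ref{lm:e_interest}(i)); and $z(e')<C_f(v)(e')\le x^i(e')\le b^i(e')$ together with the top-fill structure of $C_{w'}(b^i_{w'})$ and the quota constraint $|z_{w'}|\le q(w')$ forces $e'$ to be interesting for $w'$ as well. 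That edge $e'$, not $e$, is the blocking edge. The same confusion also surfaces in your stability step, where the stated inequality chain (``$y^p(e)=x^p(e)<b^p(e)$ forces $b^{p+1}(e)<b^p(e)$'') is inconsistent with the update rule~\refeq{Bi+1}, which only changes $b$ where $y^p<x^p$.
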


(It should be noted that for a general model considered in~\cite{AG}, which can
deal with an infinite domain $\Bscr$, the process of iteratively transforming
the current functions can be infinite, though it always converges to some
triple $(\hat b,\hat x,\hat y)$ satisfying $\hat x=\hat y$. Moreover, one
proves (in Theorems~1 and~2 in~\cite{AG}) that the limit function $\hat x$ is
stable and optimal for the appropriate part of vertices. In our particulal
case, with integer-valued functions, the process is always finite, yielding
Proposition~\ref{pr:Xp}.)

However, the above algorithm provides merely a
pseudo polynomial time bound (where the number of iterations is estimated as
$O(|E|\,\bmax)$). Below we develop a ``faster'', weakly polynomial, algorithm
to find $\xmin$. It attracts ideas from the method of constructing a
non-excessive route from Sect.~\SEC{addit_prop}.

Our alternative algorithm consists of two stages.

%---------------------- SSSEC 6.2.1

\subsubsection{Stage I: Finding a stable g-allocation} \label{sssec:stageI}

The goal of this stage is to construct, in weakly polynomial time, a g-allocation which is stable but may differ from $\xmin$.

The stage consists of iterations. In the input of a current iteration, we are given $x\in\Zset_+^E$ such that:
  \begin{numitem1} \label{eq:mod_prop}
 $x\le b$; ~$|x_w|\le q(w)$ for all $w\in W$; and $x_f\in\Ascr_f$ (viz. $C_f(x_f)=x_f$)
for all $f\in F$.
 \end{numitem1}
 
Like in Sect.~\SSEC{act_graph}, let $W^=:=\{w\in W\colon |x_w|=q(w)\}$. 
For $w\in W$, let $\ell_w=\ell_w(x)$ be the last (w.r.t. $>_w$) edge in
$\supp(x_w)$ if $x_w\ne 0$, and the first edge in $E_w$ if $x_w=0$. Define
$A_w=A_w(x):=\{e\in E_w\colon e>_w\ell_w\}$. We assume that $x$ satisfies the following:
  \begin{numitem1} \label{eq:add_cond}
for $w\in W$, each edge $e=wf\in A_w$ is not interesting for $f$ under $x_f$, i.e. either $x(e)=b(e)$, or $x(e)<b(e)$ and $C_f(x_f+\onebf^e)=x_f$.
  \end{numitem1}

Initially we put $x:=0$. At an iteration,  the current (input) $x$ is handled as
follows. For $w\in W$, let $a=a_w(x)$ be the first edge $e=wf\in E_w$ such that: $e\le_w \ell_w$, and $e$ is interesting for $f$ under $x_f$ (cf.~\refeq{2prop}), letting $a_w(x)=\{\emptyset\}$ is such an $e$ does not exist. Using terminology as is Sect.~\SEC{act-rot}, we call this edge $a$ (when exists) $W$-\emph{admissible}, and if $C_f(x_f+\onebf^a)=x_f+\onebf^a-\onebf^c$ for some $c\in E_f-\{a\}$, then the edge $c$ is called $F$-\emph{admissible}, and $(a,c)$ a \emph{tandem} passing $f$.

Acting as in Sect.~\SSEC{act_graph}, we form the auxiliary directed graph $D=D(x)=(V_D,E_D)$, where each $W$-admissible ($F$-admissible) edge $wf$ induces the directed edge $(w,f)$ (resp. $(f,w)$) of $D$. Two cases are possible.
 \medskip
 
\noindent\underline{\emph{Case 1}}.
There is no (deficit) vertex $w\in W-W^=$ such that $a_w(x)\ne\{\emptyset\}$. Then $x$ is stable (and we are done). Indeed, suppose that $x$ admits a blocking edge $e=w'f'$. Then $e\notin A_{w'}$ (by~\refeq{add_cond}. So $e\le_{w'}\, \ell_{w'}$. But if $w'\in W^=$, then $e$ is not interesting for $w'$. And if $w'\in W-W^=$, then $a_{w'}(x)=\{\emptyset\}$ implies that $e$ is not interesting for $f'$.
  \medskip
  
\noindent\underline{\emph{Case 2}}. There is $w\in W-W^=$ such that $a_w(x)\ne\{\emptyset\}$. Choose such a $w$ and consider the maximal directed path $P=(v_0,e_1,v_1,\ldots, e_k,v_k,\ldots)$ in $D$ beginning at $v_0=w$ with the property that each pair $(e_i,e_{i+1})$ with $v_i\in F$ forms a tandem (this path is determined uniquely). Three subcases are possible: (a) $P$ is finite and its last vertex occurs in $F$; (b) $P$ is finite and its last vertex occurs in $W$; (c) $P$ contains a (unique) cycle $C$.  Let $L^+$ ($L^-$) be the set of (undirected copies of) $W$-admissible (resp. $F$-admissible) edges occurring in $P$ in subcases (a),(b), and in $C$ in subcase~(c).% (these $L^+,L^-$ are computed efficiently). 

Let $x':=x+\chi^{L^+}-\chi^{L^-}$. Arguing as in case of rotations in Sect.~\SSEC{rotat}, one shows that $x'_f\succ_f x_f$ for all $f\in F\cap V_L$, where $V_L$ is the set of vertices covered by $L:=L^+\cup L^-$. Also $x'$ maintains properties~\refeq{mod_prop} and~\refeq{add_cond} (a verification in detail is rather straightforward and left to the reader).

So, in Case~2, the current iteration includes: choosing a deficit vertex $w\in W-W^=$  with $a_w(x)\ne\{\emptyset\}$, constructing (completely or partially) the corresponding directed path $P$ beginning at $w$, extracting from $P$ the corresponding sets $L^+$ and $L^-$, and updating $x':=x+\chi^{L^+}-\chi^{L^-}$. The above-mentioned increase by $\succ_f$ for all $f\in F\cap V_L$ implies that the sequence of iterations is finite and the process terminates with Case~1, thus giving a stable g-allocation as required.

This algorithm is pseudo polynomial, and in order to accelerate it, we act as in Sect.~\SEC{addit_prop} and every time apply to the pair $(L^+,L^-)$ found at an iteration with input $x$ a ``divide-and-conquer'' procedure similar to that described in~(P). This computes the maximal weight $\tau$ such that $x':=x+\tau\chi^{L^+}-\tau\chi^{L^-}$ obeys~\refeq{mod_prop} and~\refeq{add_cond}, and we accordingly update $x\mapsto x'$. (In particular, in subcases~(a) and~(b) of Case~2, $\tau$ is bounded, among other restrictions, by the residual value $q(w)-|x_w|$.)

Thus, assuming validity of condition~(C), arguing as in the proof of Theorem~\ref{tm:condC}, and taking into account~\refeq{log_iter}, we can conclude with the following

\begin{prop} \label{pr:stage_I}
The above algorithm performing Stage~I finds a stable g-allocation in weakly polynomial time (which is polynomial in $|E|$ and linear in $\log \bmax$).
  \end{prop}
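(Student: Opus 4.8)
The plan is to prove two assertions: that the algorithm halts and returns a stable g-allocation, and that it does so after a number of iterations polynomial in $|E|$, each iteration costing $O(|E|+|V|\log\bmax)$ oracle calls and auxiliary operations. The first halves of these --- namely, that the update $x\mapsto x':=x+\tau\chi^{L^+}-\tau\chi^{L^-}$ preserves~\refeq{mod_prop} and~\refeq{add_cond}, and that the g-allocation returned when Case~1 is reached is stable --- are exactly the facts already recorded in the discussion preceding the proposition; I would only restate them briefly. What remains is finiteness and the time bound.

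For finiteness I would use the observation, noted in the text, that every Case~2 iteration strictly increases $x_f$ in the revealed-preference order $\succ_f$ for each $f\in F\cap V_L$, and that $F\cap V_L\ne\emptyset$, since any $W$-admissible edge $a=wf$ is by definition interesting for $f$, so $f$ is an endpoint of the extracted path or cycle. Because each $\Ascr_f$ is finite and, along the run, the bookkeeping of ``events'' --- an edge reaching its capacity, an edge dropping to zero, a tandem at some firm being destroyed, or a deficit vertex becoming fully filled --- can only be exhausted finitely often, the process must stop.

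For the complexity I would combine a per-iteration cost with a bound on the iteration count. Each iteration builds $D(x)$ by scanning, for every $w\in W$, the edges of $E_w$ in decreasing $>_w$ order from $\ell_w(x)$ onward and testing interestingness with one oracle call apiece, which is $O(|E|)$ oracle calls; extracting the maximal path $P$, the sets $L^+$ and $L^-$, and determining which subcase occurs, take $O(|E|)$ time; and the divide-and-conquer computation of the maximal admissible weight $\tau$ costs $O(|V|\log\bmax)$ oracle calls, exactly as in~\refeq{log_iter} (the extracted path or cycle is vertex-simple on $W$, so it carries $O(|V|)$ tandems). The iteration count I would bound by copying the argument in the proof of Theorem~\ref{tm:condC}: the configuration of the auxiliary graph, in particular its family of $W$-admissible edges, changes $O(|E|)$ times over the whole run; between two consecutive such changes, condition~(C) forces the tandem at a fixed firm $f$ through a fixed positive edge to change at most $|E_f|-1$ times, while each iteration changes at least one tandem, so there are $O(|V||E|)$ iterations between consecutive changes and $O(|V||E|^2)$ iterations in all. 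Multiplying the two gives a total running time of the form $(\text{poly in }|V|,|E|)\cdot\log\bmax$, i.e. weakly polynomial, which is the claim.

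I expect the main obstacle to be the polynomial iteration bound rather than mere finiteness. Unlike Theorem~\ref{tm:condC}, the sequence here does not run between two stable g-allocations but starts from $x\equiv 0$, so one must re-establish from scratch the ``one-peak'' monotonicity of the edge values along the run --- while an edge stays $W$-admissible its $x$-value only grows, and an edge that has left that status does not return to it --- and, more delicately, check that the deficit-vertex subcases~(a),(b) of Case~2, where $x$ is raised at the first edge (and, in subcase~(b), lowered at the last edge) of an open path, do not introduce extra oscillations: the point is that each such step either fully fills the chosen deficit vertex or triggers one of the events already counted, so the same bookkeeping applies. Pinning down exactly which monotonicity invariants survive the update, and verifying that condition~(C) may be invoked precisely as in Theorem~\ref{tm:condC}, is the technical core of the proof.
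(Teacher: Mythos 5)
Your proposal follows the same route as the paper's (very terse) proof: preserve \refeq{mod_prop} and \refeq{add_cond} across iterations, verify stability at the Case~1 exit, bound the per-iteration cost by $O(|E|+|V|\log\bmax)$ oracle calls via the divide-and-conquer procedure of \refeq{log_iter}, and bound the iteration count by re-running the bookkeeping from the proof of Theorem~\ref{tm:condC} under condition~(C). Where the paper compresses the last step into a single phrase (``arguing as in the proof of Theorem~\ref{tm:condC}''), you correctly flag what that adaptation actually requires --- the iterates in Stage~I are not stable g-allocations, the run starts from $x\equiv 0$ rather than between two stable points, and subcases~(a),(b) of Case~2 (the open-path updates tied to deficit vertices) have no analogue in Theorem~\ref{tm:condC} --- and you indicate the right way to absorb them into the existing event count. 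That is precisely the detail the paper leaves implicit, so the two arguments are the same in substance, with yours being more candid about the gap.
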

  %

%---------------------- SSSEC 6.2.2

\subsubsection{Stage II: Transforming $x\in\Sscr$ into $\xmin$} \label{sssec:stageII}

This is reduced to a series of the following tasks:
  \begin{numitem1}\label{eq:xp_prec}
given $x\in\Sscr$, find $x'\in\Sscr$ immediately preceding $x$ in $(\Sscr,\succ_F)$, i.e. $x$ is obtained by shifting $x'$ with weight 1 along a rotation $L'$ for $x'$, or establish that $x=\xmin$.
 \end{numitem1}
 
In turn, we say that $x'$ is obtained by shifting $x$ with weight 1 along the cycle reversed to $L'$, or along a \emph{reversed rotation}. Our method of constructing a reversed rotation is based on several observations below.

For $w\in W$, let $A_w=A_w(x)$ and $\ell_w=\ell_w(x)$ be defined as in Sect.~\SSSEC{stageI}. Then for $x'$ and $L'$ as above, the vertices $v$ such that $x'_v\ne x_v$ (equivalently, $v\in V_{L'}$) belong to $W^=$ (achieve the quota $q(v)$), and one can see that
  \begin{numitem1} \label{eq:xp-ne-x}
\begin{itemize}
 \item[(i)] for $w\in W$, if $x'_w\ne x_w$, then $x'_w-x_w=\onebf^c-\onebf^a$, where $a=\ell_w(x)$ and $c\in A_w(x)$ (in other words, $L'^+\cap E_w=\{a\}$ and $L'^-\cap E_w=\{c\}$);
 \item[(ii)] for $f\in F$, if $x'_f\ne x_f$, then $x'_f-x_f=\sum(\onebf^{c(i)}-\onebf^{a(i)}\colon i=1,\ldots,k)$, where $(a(1),c(1)),\ldots,(a(k),c(k))$ are the tandems for $x'$ passing $f$ that occur in the rotation $L'$.
 \end{itemize}
 \end{numitem1}
 
Define 
  \begin{gather*}
    U^-:=\{\ell_w\colon w\in W^=\}; \quad U^+_w:=\{e\in A_w\colon x(e)<b(e)\}\;\; (w\in W^=);\\
 U^+:=\cup(U^+_w\colon w\in W^=); \quad   U^-_f:=U^-\cap E_f\quad \mbox{and}\quad  U^+_f:=U^+\cap E_f\;\; (f\in F).
    \end{gather*}
    
In particular, $L'^-\subseteq U^+$ and $L'^+\subseteq U^-$ (in view of~\refeq{xp-ne-x} and $x'-x=\chi^{L'^-}-\chi^{L'^+}$).
 \medskip
 
\noindent\textbf{Definition 7.} For $w\in W^=$ and $c\in U^+_w$, we say that $(\ell_w(x),c)$ is a \emph{legal $w$-pair} for $x$. For $f\in F$, $a\in U^-_f$ and $c\in U^+_f$, we say that $(c,a)$ is a \emph{legal $f$-pair} for $x$ if $z:=x_f+\onebf^c-\onebf^a$ is acceptable, i.e. $C_f(z)=z$.
  \medskip
  
Note that each edge $c=wf\in U^+_w$ is interesting for $w$ under $x$ (since $x(c)<b(c)$ and, obviously, $C_w(x_w+\onebf^c)=x_w+\onebf^c-\onebf^{\ell_w}$). Therefore, the stability of $x$ implies that $c$ is not interesting for $f$ under $x$, i.e. $C_f(x_f+\onebf^c)=x_f$. Then for a legal $f$-pair $(c,a)$ and $z:=x_f+\onebf^c-\onebf^a$, we have
  $$
  C_f(z+\onebf^a)=C_f(x_f+\onebf^c)=x_f=z+\onebf^a-\onebf^c,
  $$
which means that $a$ is interesting for $f$ under $z$, $(a,c)$ is a tandem for $z$ passing $f$, and $x_f$ is obtained from $z$ by ``shifting along $(a,c)$''. In particular, $z\prec_f x_f$ (in view of $C_f(z\vee x_f)=C_f(x_f+\onebf^c)=x_f$).

Now for $f\in F$, fix a legal $f$-pair $(c,a)$ and consider an edge $d\in U^+_f$ different from $c$. This $d$ is not interesting for $f$ under $x$ (similar to $c$). The next fact is important.
 \begin{lemma} \label{lm:essent_pair}
Let $d\in U^+_f-\{c\}$ be interesting for $f$ under $z:=x_f+\onebf^c-\onebf^a$. Then $(d,a)$ is a legal $f$-pair and $(x_f+\onebf^d-\onebf^a)\succ_f z$.
  \end{lemma}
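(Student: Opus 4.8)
The plan is to determine $\tilde z:=C_f(z+\onebf^d)$ exactly, by showing $\tilde z=x_f+\onebf^d-\onebf^a$; both claims of the lemma then drop out. Write $C:=C_f$. First I would record what the surrounding text already supplies: $C(z)=z$ (as $(c,a)$ is a legal $f$-pair), $C(x_f+\onebf^c)=x_f$ and $C(x_f+\onebf^d)=x_f$ (since $c,d\in U^+_f$ and $x$ is stable, neither $c$ nor $d$ is interesting for $f$ under $x$), $z\prec_f x_f$, and the elementary facts $a\ne c$, $a\ne d$, $c\ne d$, $z+\onebf^d\in\Bscr_f$, and $|z|=|x_f|$.

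The first key step is to pass to the common upper bound $\bar z:=x_f+\onebf^c+\onebf^d\in\Bscr_f$ and prove $C(\bar z)=x_f$. Applying axiom~(A2) to $\bar z\ge x_f+\onebf^c$ and to $\bar z\ge x_f+\onebf^d$ yields $C(\bar z)(c)\le x(c)$ and $C(\bar z)(d)\le x(d)$; together with $C(\bar z)(e)\le\bar z(e)=x_f(e)$ for all other $e$, this gives $C(\bar z)\le x_f$, and axiom~(A3) applied to $\bar z\ge x_f$ forces $C(\bar z)=x_f$.

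Next I analyse $\tilde z$. Since $z$ is acceptable and $d$ is interesting for $f$ under $z$, Lemma~\ref{lm:e_interest}(ii) gives $\tilde z\succ_f z$ and either $\tilde z=z+\onebf^d$ or $\tilde z=z+\onebf^d-\onebf^e$ with $e\in E_f-\{d\}$. I would exclude the first alternative by a size count: since $(z+\onebf^d)\vee x_f=\bar z$, stationarity~\refeq{plott} gives $C(\tilde z\vee x_f)=C((z+\onebf^d)\vee x_f)=C(\bar z)=x_f$, so axiom~(A3) applied to $\tilde z\vee x_f\ge\tilde z$ gives $|\tilde z|\le|x_f|$, while $\tilde z\succ_f z$ gives (again by~(A3), applied to $z\vee\tilde z\ge z$) $|\tilde z|\ge|z|=|x_f|$; hence $|\tilde z|=|x_f|$, which rules out $\tilde z=z+\onebf^d$ (of size $|x_f|+1$). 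To pin down $e$ in the remaining case, I would use a lower bound on $\tilde z$: axiom~(A2) applied to $\bar z\ge z+\onebf^d$ gives $\tilde z\ge C(\bar z)\wedge(z+\onebf^d)=x_f\wedge(z+\onebf^d)=x_f-\onebf^a$; writing $\tilde z=x_f+\onebf^c+\onebf^d-\onebf^a-\onebf^e$, the inequality $\tilde z\ge x_f-\onebf^a$ forces $\onebf^c+\onebf^d-\onebf^e\ge0$, i.e. $e\in\{c,d\}$, and since $e\ne d$ we get $e=c$.

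Therefore $\tilde z=z+\onebf^d-\onebf^c=x_f+\onebf^d-\onebf^a=:z'$. Since $C(C(\cdot))=C(\cdot)$ (a consequence of~(A1)), $C(z')=C(C(z+\onebf^d))=C(z+\onebf^d)=z'$, so $z'$ is acceptable; as $d\in U^+_f$ and $a\in U^-_f$, this is precisely the statement that $(d,a)$ is a legal $f$-pair. Finally $z'=\tilde z\succ_f z$ by the choice of $\tilde z$. The step I expect to be the crux is the identification $e=c$: it is there that the specific structure is used, combining the lower bound $\tilde z\ge x_f-\onebf^a$ (which rests on $C(\bar z)=x_f$) with the shape $\tilde z=z+\onebf^d-\onebf^e$ coming from Lemma~\ref{lm:e_interest}; everything else is careful but routine bookkeeping with axioms (A1)--(A3).
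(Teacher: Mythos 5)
Your proof is correct and follows essentially the same strategy as the paper's: both pass to $\bar z = x_f+\onebf^c+\onebf^d$, establish $C_f(\bar z)=x_f$ via (A2) and (A3), and then pin down $C_f(z+\onebf^d)$. Where the paper splits into three cases (a)/(b)/(c) for the form of $C_f(z+\onebf^d)$ and rules out (a) and (b) separately, you instead derive the single lower bound $\tilde z\ge C_f(\bar z)\wedge(z+\onebf^d)=x_f-\onebf^a$ from (A2) applied to $\bar z\ge z+\onebf^d$, which together with a size count identifies $e=c$ in one stroke; this is a modest streamlining that in fact sidesteps a small awkwardness in the paper's case (b) (where the possibility $d'=a$ is not quite covered by the stated equality $(z+\onebf^d)(d')=x_f(d')$, though the contradiction still holds).
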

  \begin{proof}
Let $C=C_f$. Since $d$ is interesting under $z$, $C(z+\onebf^d)$ is equal to one of the following: (a) $z+\onebf^d$; (b) $z+\onebf^d-\onebf^{d'}$ for some $d'\ne c$; or (c) $z+\onebf^d-\onebf^c$.

Note that $C(x_f+\onebf^c)=x_f$ and $C(x_f+\onebf^d)=x_f$ imply that $\hat z:=x_f+\onebf^c+\onebf^d$ satisfies $C(\hat z)=x_f$. (For $\hat z>x_f+\onebf^c$ implies $C(\hat z)\wedge(x_f+\onebf^c)\le C(x_f+\onebf^c)=x_f$, whence $C(\hat z)(c)\le x_f(c)$. Similarly, $C(\hat z)(d)\le x_f(d)$, and now $C(\hat z)=x_f$ follows from $\hat z>x_f$ by~(A3).)

In case~(a), $\hat z>z+\onebf^d$ implies $|C(\hat z)|\ge|C(z+\onebf^d)|$ (by~(A3)). But $|C(\hat z)|=|x_f|$ and $|C(z+\onebf^d)|= |z+\onebf^d|=|x_f|+1$; a contradiction.

In case~(b), $\hat z>z+\onebf^d$ and $C(z+\onebf^d)=z+\onebf^d-\onebf^{d'}$ imply $C(\hat z)\wedge(z+\onebf^d)\le z+\onebf^d-\onebf^{d'}$ (by~(A2)). But $C(\hat z)(d')=x_f(d')$ and $(z+\onebf^d)(d')=x_f(d')$, whereas $(z+\onebf^d-\onebf^{d'})(d')<x_f(d')$ (since $d'\ne c,d$); a contradiction.

And in case (c), we have 
$$
C(z+\onebf^d)=z+\onebf^d-\onebf^c=(x_f+\onebf^c-\onebf^a)+\onebf^d-\onebf^c
=x_f+\onebf^d-\onebf^a,
$$
implying that $(d,a)$ is a legal $f$-pair. Also $C(z\vee(x_f+\onebf^d-\onebf^a))=C(z+\onebf^d)=x_f+\onebf^d-\onebf^a$, whence $(x_f+\onebf^d-\onebf^a)\succ_f z$, as required.  
  \end{proof}

For $f\in F$ and $a\in U^-_f$, let $U^+_f[a]$ denote the set of edges $c\in U^+_f$ such that $(c,a)$ is a legal $f$-pair. In light of the relations (by $\succ_f$) on such pairs established in Lemma~\ref{lm:essent_pair}, the following useful property takes place:
  \begin{numitem1}\label{eq:U+fa}
if $U^+_f[a]\ne\emptyset$, then there exists a legal $f$-pair $(c,a)$ such that no $d\in U^+_f-\{c\}$ is interesting for $f$ under $x_f+\onebf^c-\onebf^a$.
  \end{numitem1}
We say that a legal $f$-pair $(c,a)$ of this sort is \emph{essential}.

Now suppose that we succeed to construct a cycle $L=(v_0,e_1,v_1,\ldots,e_k,v_k=v_0)$ in $G$ such that: all edges $e_i$ are different; for $i$ even, $v_i\in W^=$ and $(e_{i},e_{i+1})$ is a legal $v_i$-pair (i.e. $e_{i}=\ell_{v_i}(x)$ and $e_{i+1}\in U^+_{v_i}(x)$); and for $i$ odd, $v_i\in F$ and $(e_{i},e_{i+1})$ is an \emph{essential} $v_i$-pair (where $e_{i}\in U^+_{v_i}[e_{i+1}]$ and $e_{i+1}\in  U^-_{v_i}$). We refer to $L$ as a \emph{legal cycle} for $x$ (which just plays the role of reversed rotation, in a sense), and denote by $L^+$ ($L^-$) the set of edges from $U^+$ (resp. $U^-$) in $L$. When needed, we also may think of $L$ as the corresponding subgraph $(V_L,E_L)$. Next we demonstrate the key property of our construction.
 \begin{lemma} \label{lm:legal_cycle}
Let $L$ be a legal cycle for $x$. Then $x':=x+\chi^{L^+}-\chi^{L^-}$ is stable and $x'\prec_F x$.
  \end{lemma}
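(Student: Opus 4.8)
The plan is to verify, one after another, that $x'$ belongs to $\Bscr$ and respects the quotas, that $x'$ is acceptable, that $x'\prec_F x$, and that $x'$ has no blocking edge; the whole scheme is the proof of Proposition~\ref{pr:xxp} read ``in reverse'', with the legal cycle $L$ playing the role of a reversed rotation. Throughout, for $f\in F\cap V_L$ I write $S^+_f:=L^+\cap E_f=\{c(1),\dots,c(m)\}$ and $S^-_f:=L^-\cap E_f=\{a(1),\dots,a(m)\}$, where $(c(1),a(1)),\dots,(c(m),a(m))$ are the pairwise edge-disjoint essential legal $f$-pairs that $L$ uses at $f$. The admissibility of $x'$ is immediate: each $e\in L^+$ lies in some $U^+_w$, hence $x(e)<b(e)$, and each $e\in L^-$ equals $\ell_w(x)$ for some $w\in W^{=}$, hence $x(e)\ge 1$; and at every $w\in V_L$ one has $|L^+\cap E_w|=|L^-\cap E_w|=1$, so $|x'_w|=|x_w|=q(w)$, which by quota-filling also makes $x'_w$ acceptable (and $x'_w=x_w$ for $w\notin V_L$).

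The heart of the matter is acceptability of $x'_f$ for $f\in F\cap V_L$, that is, $C_f(x_f+\chi^{S^+_f}-\chi^{S^-_f})=x_f+\chi^{S^+_f}-\chi^{S^-_f}$, a ``reverse'' version of Lemma~\ref{lm:acac}. I would prove by induction on $|J|$, for $J\subseteq\{1,\dots,m\}$, that $z_J:=x_f+\sum_{j\in J}(\onebf^{c(j)}-\onebf^{a(j)})$ is acceptable and that no edge of $U^+_f\setminus\{c(j):j\in J\}$ is interesting for $f$ under $z_J$; taking $J=\{1,\dots,m\}$ then gives the claim (and also that each $(a(j),c(j))$ is a tandem for $x'_f$, via $z_J+\onebf^{a(j)}=z_{J\setminus\{j\}}+\onebf^{c(j)}$). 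The cases $|J|\le 1$ are exactly the hypotheses that $x$ is stable (so no edge of $U^+_f$ is interesting under $x_f$) and that the $(c(j),a(j))$ are essential. For the step I would first isolate an elementary fact, proved by (A2)+(A3) (it generalises the sub-claim inside the proof of Lemma~\ref{lm:essent_pair}): if $z$ is acceptable and edges $e_1,\dots,e_r$ are individually not interesting for $f$ under $z$, then $C_f(z+\onebf^{e_1}+\dots+\onebf^{e_r})=z$. Using this together with (A2) applied to $z_{J\setminus\{p\}}+\onebf^{c(p)}\ge z_J$ pins $C_f(z_J)$ down to one of $z_J$ or $z_J-\onebf^{c(p)}$; running the same comparison with a second index $p'\neq p$ forces $C_f(z_J)=z_J$, since the alternative would give $c(p)=c(p')$. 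For the non-interesting part, supposing some $d\in U^+_f\setminus\{c(j):j\in J\}$ were interesting under $z_J$, the same bookkeeping (now against $x_f+\sum_{j\in J}\onebf^{c(j)}+\onebf^{d}$, whose $C_f$-image is $x_f$) forces $C_f(z_J+\onebf^{d})=z_J+\onebf^{d}-\onebf^{c(p)}$ for a single $p\in J$; a third application of the elementary fact, now based at $z^p:=x_f+\onebf^{c(p)}-\onebf^{a(p)}$ and using that $d$ and the $c(j)$ with $j\neq p$ are not interesting under $z^p$ --- which is precisely the essentiality of $(c(p),a(p))$ --- then contradicts the interestingness of $d$. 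This disambiguation is the main obstacle: without the gapless/essentiality structure one cannot rule out $C_f(z_J+\onebf^{d})=z_J+\onebf^{d}-\onebf^{c(p)}$, and indeed the Appendix exhibits exactly such behaviour.

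The remaining pieces are comparatively soft. Since each $(a(j),c(j))$ is a tandem for $x'_f$, Lemma~\ref{lm:acac} applied to $x'_f$ gives $x_f=C_f(x'_f+\chi^{S^-_f})$ (so $L$ traversed the other way really does carry $x'$ back to $x$). For $x'\prec_F x$: when $f\in F\cap V_L$ we have $x'_f\vee x_f=x_f+\chi^{S^+_f}$, and since the $c(j)$ are not interesting under $x_f$, the elementary fact yields $C_f(x'_f\vee x_f)=C_f(x_f+\chi^{S^+_f})=x_f$, i.e.\ $x'_f\preceq_f x_f$; for $f\notin V_L$, $x'_f=x_f$. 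As $L^+\neq\emptyset$ we have $x'\neq x$, hence $x'\prec_F x$.

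Finally, to see that $x'$ is stable, suppose $e=wf$ blocks $x'$. If $e\in L^+$, then $e=c(p)$ and $(a(p),c(p))$ is a tandem for $x'_f$, so by property~\refeq{acd} $e$ is not interesting for $f$ under $x'_f$ --- a contradiction. If $e\in L^-$, then $e=\ell_w(x)$ with $w\in W^{=}$, and since adjoining the more-preferred edge of $U^+_w$ to $\supp(x_w)$ (and possibly deleting $e$) can only push $\ell_w(\cdot)$ up in $>_w$, we have $e\le_w\ell_w(x')$, so $e$ is not interesting for $w$ under $x'_w$ --- again a contradiction. If $e\notin L$, then $x'(e)=x(e)$; if $e$ is interesting for $w$ under $x'_w$, then, using $\ell_w(x')\ge_w\ell_w(x)$ when $w\in W^{=}$ (resp.\ that a deficit $w$ has room exactly when $x(e)<b(e)$), $e$ is interesting for $w$ under $x_w$ too, so the stability of $x$ gives $C_f(x_f+\onebf^{e})=x_f$; then stationarity~\refeq{plott}, combined with (A2)+(A3) and the identity $x_f=C_f(x'_f+\chi^{S^-_f})$ (here $e\notin S^+_f\cup S^-_f$), yields $C_f(x'_f+\onebf^{e})=x'_f$, i.e.\ $e$ is not interesting for $f$ under $x'_f$ --- contradiction (and this last part is trivial when $f\notin V_L$). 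Hence $x'$ admits no blocking edge, completing the proof.
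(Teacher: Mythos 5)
Your overall architecture matches the paper's: you establish acceptability of $x'_f$ (via a ``reverse'' analogue of Lemma~\ref{lm:acac}, carried by induction and by the \emph{essential} property of the legal $f$-pairs), you derive $x'\prec_F x$ from the fact that the edges of $S^+_f$ are not interesting under $x_f$, and you conclude stability by showing a putative blocking edge would already have to fail to be interesting for $f$ under $x'_f$. In fact you are more explicit than the paper at several points the paper leaves implicit (acceptability of $x'$, admissibility, and the precise shape of the induction), and that is fine.

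There is, however, a flawed step in your final ``$e\notin L$'' case. You claim that stationarity~\refeq{plott} together with (A2), (A3) and $x_f=C_f(x'_f+\chi^{S^-_f})$ forces $C_f(x'_f+\onebf^e)=x'_f$. Going through this: stationarity does give $C_f(x_f+\chi^{S^+_f}+\onebf^e)=x_f$, and (A2) applied to $x_f+\chi^{S^+_f}+\onebf^e\ge x'_f+\onebf^e$ gives $x_f\wedge(x'_f+\onebf^e)\le C_f(x'_f+\onebf^e)$, i.e.\ $x'_f-\chi^{S^+_f}\le C_f(x'_f+\onebf^e)$, while (A3) gives $|C_f(x'_f+\onebf^e)|=|x'_f|$. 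These constraints do \emph{not} determine $C_f(x'_f+\onebf^e)$: they are still compatible with $C_f(x'_f+\onebf^e)=x'_f+\onebf^e-\onebf^{c(p)}$ for some $c(p)\in S^+_f$, which would make $e$ blocking. Axiom~(A1) does not apply because $x'_f+\onebf^e\not\ge x_f$ (it drops below $x_f$ on $S^-_f$). So this generic manipulation cannot finish the argument; ruling out the ``$\onebf^{c(p)}$ swap'' is exactly where the gapless/essentiality structure is needed, as the Appendix shows.

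The repair is already in your own proof: your induction establishes, for $J=\{1,\dots,m\}$, that no $d\in U^+_f\setminus S^+_f$ is interesting for $f$ under $x'_f=z_{\{1,\dots,m\}}$. For a blocking $e\notin L$ one should first observe $e\in U^+_f$ (as the paper does: $e$ interesting for $w$ under $x'_w$ forces $e>_w\ell_w(x)$ and $x(e)<b(e)$), and then invoke the induction directly, without the stationarity detour. Note that this observation presupposes $w\in W^{=}$; the deficit case ($w\notin W^{=}$, hence $e\notin U^+_f$) is not covered either by your argument or by the paper's, which also jumps to ``$e\in U^+_w(x)$'' without addressing it, so you have inherited rather than introduced that omission.
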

  \begin{proof}
Suppose that $x'$ is not stable and let $e=wf$ be a blocking edge for $x'$. The fact that $e$ is interesting for $w$ under $x'$ implies that this is so for $w$ under $x$ as well. (The latter is immediate if $w\notin V_L$. And if $w\in V_L$, then, by the construction of $L$, the set $L^-\cap E_w$ consists of the only edge $\ell_w(x)$, and $L^+\cap E_w$ consists of one edge in $A_w(x)$. Then $e>_w\ell_w(x)$ and $x(e)\le x'(e)<b(e)$, whence $e\in U^+_w(x)$.) 

Thus, $e\in U^+_w(x)$ and $e$ is not interesting for $f$ under $x$ (since $x$ is stable). Let $(c(1),a(1)),\ldots,(a(k),c(k))$ be the legal $f$-pairs for $x$ occurring in $L$. Since each $(c(i),a(i))$ is essential, $e$ is not interesting for $f$ under $x_f+\onebf^{c(i)}-\onebf^{a(i)}$ (cf.~\refeq{U+fa}). 

For $i=1,\ldots,k$, define $z^i:=x_f+(\onebf^{c(1)}-\onebf^{a(1)})+\cdots+(\onebf^{c(i)}-\onebf^{a(i)})$. One shows that $e$ is not interesting for $f$ under each $z^i$, and that for any $j>i$, $(c(j),a(j)$ is an essential legal $f$-pair relative to $z^i$ (this is not difficult to show by induction on $i$, in spirit of the proof of Lemma~\ref{lm:acac}). 

As a consequence, $e$ is not interesting for $f$ under $x'_f=z^k$, contrary to the supposition. So $x'$ is stable. Now repeatedly applying Lemma~\ref{lm:essent_pair}, we obtain $x_f \succ z^1\succ \cdots\succ z^k=x'_f$, implying $x'\prec_F x$, as required.  
  \end{proof}
  
Lemma~\ref{lm:legal_cycle} enables us to conclude that $x$ is obtained from $x'$ by shifting (with weight 1) along the rotation $L'$ formed by ``reversing'' the cycle $L$, i.e. such that $L'^+=L^-$ and $L'^-=L^+$. So $x'$ immediately precedes $x$ in the lattice $(\Sscr,\succ_F)$.

Next we explain how to construct  a legal cycle $L$ for $x$ (to solve task~\refeq{xp_prec}). For this purpose, we form the following directed graph $\Gscr=\Gscr(x)$:
  \begin{numitem1} \label{eq:Gscr}
  \begin{itemize}
\item[((a)] 
each edge $a=wf\in U^-$ generates two vertices $w^a$ and $f^a$ and one (directed) edge $e^a=(f^a,w^a)$ in $\Gscr$;
 \item[(b)] each edge $c=wf\in U^+$ generates vertices $w^c$ and $f^c$ and edge $e^c=(w^c,f^c)$ in $\Gscr$;
 \item[(c)] each legal $w$-pair $(a,c)$ (where $w\in W$) generates edge $(w^a,w^c)$ in $\Gscr$;
 \item[(d)] each essential legal $f$-pair $(c,a)$ generates edge $(f^c,f^a)$ in $\Gscr$.
  \end{itemize}
  \end{numitem1}
  
One can see that each legal cycle for $x$ in $G$ determines, in a natural way, a simple directed cycle in $\Gscr$, and conversely. So the task of finding a legal cycle for $x$, or establishing that there is none (in which case we declare that $x=\xmin$) can be solved efficiently, in time $O(|V| |E|)$.

Finally, once a legal cycle $L$ is available, assuming validity of the gapless condition~(C) (Sect.~\SEC{addit_prop}), we can apply a ``divide-and-conquer'' procedure (cf.~(P) in Sect.~\SEC{addit_prop}) to find the maximal $\tau\in\Zset_{>0}$ such that $L$ is applicable to $x$ with weight $\tau$. (So $x+\tau\chi^{L^+}-\tau\chi^{L^-}$ is stable and the procedure takes $O(\log\,\bmax)$ steps.)

As a result, starting with an arbitrary stable g-allocation $x$ and acting as above, step by step, we are able to construct a sequence $x^0,x^1,\ldots,x^N$ in $\Sscr$ such that $x^0=x$, ~$x^N=\xmin$, and the reversed sequence $x^N,\ldots, x^0$ forms a non-excessive route from $\xmin$ to $x$. Then $N=O(|V| |E|^2)$ (cf. Theorem~\ref{tm:condC}) and we come to the following result:
  \begin{numitem1} \label{eq:stage_II}
the above algorithm for Stage~II transforms an arbitrary $x\in \Sscr$ into $\xmin$ efficiently, in time $\log{\bmax}$ times a polynomial in $|E|$.
  \end{numitem1}

This together with Propositions~\ref{pr:stage_I} and~\ref{pr:time_for_H} gives the following
  \begin{theorem} \label{tm:time_poset}
Subject to condition~(C), the poset of rotations $(\Rscr,\tau,\lessdot)$ can be constructed in weakly polynomial time, estimated as $\log{\bmax}$ times a polynomial in $|E|$ (including oracle calls).
 \end{theorem}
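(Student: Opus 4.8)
The plan is to assemble the final time bound from the three constructions already established in the section, which together realise the full pipeline: (Stage I) find \emph{some} stable g-allocation; (Stage II) drive it down to $\xmin$; and then (the $H$-construction) grow a full route from $\xmin$ and read off the Hasse diagram of $(\Rscr,\lessdot)$ together with all maximal weights $\tau_L$. Concretely, I would first invoke Proposition~\ref{pr:stage_I} to obtain a stable g-allocation $x$ in weakly polynomial time (polynomial in $|E|$, linear in $\log\bmax$). Then I would apply the Stage~II algorithm: by~\refeq{stage_II} it transforms this $x$ into $\xmin$, again in time $\log\bmax$ times a polynomial in $|E|$; here one uses that, subject to~(C), the reversed route has length $N=O(|V||E|^2)$ by Theorem~\ref{tm:condC}, and each of its $N$ steps costs one legal-cycle search in $\Gscr(x)$ (time $O(|V||E|)$, with $O(|E|)$ oracle calls to build the legal and essential $f$-pairs) plus one divide-and-conquer weight computation ($O(\log\bmax)$ oracle calls, cf.~(P) and~\refeq{log_iter}).

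Once $\xmin$ is in hand, I would appeal to Proposition~\ref{pr:time_for_H}: the set $\Rscr$ of rotations together with their maximal feasible weights $\tau_L$ and the minimal generating graph $H=(\Rscr,\Escr)$ of $(\Rscr,\lessdot)$ is constructed in weakly polynomial time, namely $\log\bmax$ times a polynomial in $|V|,|E|$. This step rests on Theorem~\ref{tm:condC} (so $|\Rscr|=O(|V||E|^2)$, a polynomial bound), on Theorem~\ref{tm:omega} and Lemma~\ref{lm:immed_preced} (which legitimise the ``Constructing $\Iscr^+_L$'' procedure, i.e.\ reading immediate successors off the active graph reached after climbing the maximal ideal $\Imax_{-L}$ and then applying $L$), and on~\refeq{solvT} for the per-step cost of task~(T). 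Combining the three weakly polynomial bounds yields the asserted total: $\log\bmax$ times a polynomial in $|E|$, where the count includes oracle calls since each elementary ``oracle call'' is charged $O(1)$, and the factor $\log\bmax$ arises solely from the divide-and-conquer subroutine~(P) used whenever a maximal rotation weight is computed.

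Finally, the weighted poset $(\Rscr,\tau,\lessdot)$ is literally the triple just produced: the ground set $\Rscr$ and the Hasse diagram $H$ come from Proposition~\ref{pr:time_for_H}, the weights $\tau$ come along with $\Rscr$, and the partial order $\lessdot$ is recovered as the transitive closure of $\Escr$ (computable in time polynomial in $|\Rscr|$, hence in $|E|$), so no additional asymptotic cost is incurred. This completes the construction within the claimed weakly polynomial time bound.

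\textbf{Anticipated main obstacle.} The routine-looking but genuinely load-bearing point is keeping \emph{all} per-step costs polynomial-and-$\log\bmax$ while the route lengths themselves are already polynomial: one must be sure that condition~(C) is in force at \emph{every} place a route or reversed route is built --- not only in Stage~II and in the full route from $\xmin$, but also inside the Stage~II legal-cycle weight computations and inside each ``Constructing $\Iscr^+_L$'' call --- so that Theorem~\ref{tm:condC}(i) (no rotation repeats) applies uniformly and the length factor never degrades to $\bmax$. Granting~(C) throughout, as the section assumes, this is exactly what Propositions~\ref{pr:stage_I},~\ref{pr:time_for_H} and statement~\refeq{stage_II} package up, and the theorem follows by their concatenation.
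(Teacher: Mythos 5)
Your proposal is correct and follows the paper's argument exactly: the theorem is obtained in the paper simply by concatenating Proposition~\ref{pr:stage_I} (Stage~I), statement~\refeq{stage_II} (Stage~II), and Proposition~\ref{pr:time_for_H} (constructing $\Rscr$, $\tau$, and the Hasse diagram once $\xmin$ is known), each of which carries a $\log\bmax$-times-polynomial bound. The supporting details you cite --- Theorem~\ref{tm:condC} for the $O(|V||E|^2)$ route length, \refeq{log_iter}/\refeq{solvT} for per-step cost, and Theorem~\ref{tm:omega} with Lemma~\ref{lm:immed_preced} for the $\Iscr^+_L$ procedure --- match the paper's justification of those three building blocks.
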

 
\noindent\textbf{Remark~2.}
Suppose we address the \emph{problem of finding a stable g-allocation of minimum cost} ($\ast$):
given \emph{costs} $c(e)\in\Rset$ of edges $e\in E$, find $x\in\Sscr$ minimizing the total cost $cx:=\sum(c(e)x(e)\colon e\in E)$. Subject to condition~(C), the size $|\Pscr|$ of poset $(\Pscr,\tau,\lessdot)$ representing $\Sscr$ is polynomial in $|E|$ (by Theorem~\ref{tm:condC}). This enables us to solve problem~($\ast$) efficiently, in strongly polynomial time, when the poset $(\Pscr,\tau,\lessdot)$ %and the minimal stable g-allocation $\xmin$ are 
is explicitly known. This is due to a nice approach originally elaborated in Irving et al.~\cite{ILG} to minimize a linear function over stable marriages and subsequently applied to some other models of stability; it consists in a reduction to a linear minimization on ideals or closed functions for a related poset, and further  to the standard minimum cut problem, by a method due to Picard~\cite{pic}. We briefly outline how it works in our case of SGAM (referring for more details, e.g., to~\cite[Sec.~6]{karz2}).

First we compute the cost $c_L:=c(L^+)-c(L^-)$ of each rotation $L\in\Rscr$. Then for each $x\in\Sscr$ and the corresponding closed function $\xi=\omega(X)$ (cf.~\refeq{omegax}), we have $cx=c\xmin+\sum(c_L\xi(L)\colon L\in \Rscr)$.
This gives the problem equivalent to~($\ast$), namely ($\ast\ast$): find a closed function $\xi$ minimizing the total weight $c^\xi:=\sum(c_L\xi(L)\colon L\in\Rscr)$. Clearly ($\ast\ast$) has an optimal solution determined by an ideal $\Iscr$ of $(\Rscr,\lessdot)$ (in which case $\xi(L)=\tau_L$ if $L\in \Iscr$, and 0 otherwise).

To solve ($\ast\ast$), we act as in~\cite{pic} and transform the generating graph $H=(\Rscr,\Escr)$ of the poset into directed graph $\hat H=(\hat\Rscr,\hat\Escr)$ with edge capacities $h$, as follows:

(a) add two vertices: ``source'' $s$ and ``sink'' $t$; 

(b) add the set $\Escr^+$ of edges $(s,L)$ for all vertices $L$ in $\Rscr^+:=\{L\in \Rscr\colon c_L>0\}$;

(c) add the set $\Escr^-$ of edges $(L,t)$ for all vertices $L$ in $\Rscr^-:=\{L\in\Rscr\colon c_L<0\}$;

(d) assign the capacities $h(s,L):=c_L\tau_L$ for $(s,L)\in \Escr^+$,  $h(L,t):=|c_L\tau_L|$ for $(L,t)\in \Escr^-$, and $h(L,L'):=\infty$ for $(L,L')\in\Escr$.

Now it is rather straightforward to show that an ideal $\Iscr$ of $(\Rscr,\lessdot)$ determines an optimal solution to ($\ast\ast$) if and only if the set of edges of $\hat H$ going from $\{s\}\cup(\Rscr-\Iscr)$ to $\{t\}\cup\Iscr$ forms a minimum capacity cut among those separating $s$ from $t$.

As a consequence, the polynomial size of $\Pscr$ implies that each of the above problems: the min-cut one, ($\ast\ast$) and ($\ast$), is solvable in strongly polynomial time, as required.

 \appendix

\section{Appendix. Withdrawal of property (C) and large rotational posets} \label{sec:notC}

According to Theorem~\ref{tm:condC}, if the choice functions $C_f$ for all
vertices $f\in F$ satisfy the gapless condition~(C), then any rotation (regarded as a
cycle) is involved in a non-excessive route at most once, and the number of
rotations is estimated as $O(|V| |E|^2)$. This implies that the size of the
rotational poset (without weights) is bounded by a polynomial in the size of
the input graph $G$. In this section we construct examples showing that
discarding condition~(C) (while preserving axioms (A1)--(A4)) may cause a
significant grow of the size of the corresponding poset, which may be proportional to
the maximal edge capacity (upper bound) $\bmax$. Moreover, in a non-excessive
route, one and the same cycle of $G$ may appear as a rotation many times.

We will use one construction of choice functions that was
pointed out to the author by Vladimir Danilov (a private communication).
%(Author's original construction of appropriate choice functions looks more intricate and we omit it here.)
In fact, the choice functions of this sort are
generated by certain extensions of linear orders.

This construction associates to arbitrary $k\in\Zset_{>0}$ and $b\in
\Zset_+^{[k]}$ a planar \emph{diagram} $D=D_{k,b}$ formed by unit square
\emph{cells} labeled by coordinates $(i,j)$, where $i\in[k]$ (=$\{1,\ldots,k\}$) and $j$ ranges $0,1,\ldots,b(i)$. We think of the sequence of cells
$(i,0),(i,1),\ldots,(i,b(i))$ as $i$-th \emph{column} in $D$ (growing upward),
denoted as $\zeta_i$, and refer to a subset of cells as a \emph{section} of
$D$ if it contains exactly one element in each column.

Let $\Bscr=\Bscr_{k,b}:=\{z\in \Zset_+^{[k]}\colon z\le b\}$. For $z\in\Bscr$,
define the \emph{section} $\sigma(z)$ to be the set $\{(i,j)\colon i\in[k],\,
j=z(i)\}$ (so $\sigma$ gives a bijection between $\Bscr$ and the set of
sections in $D$). Also for $z\in\Bscr$, we denote by $\Lambda_z$ (and by
$\Lambda_{\sigma(z)}$) the set of cells ``weakly below'' the section
$\sigma(z)$, i.e. the set $\{(i,j)\colon i\in[k],\, 0\le j\le z(i)\}$, called
the \emph{lower set} for $z$ (and for $\sigma(z)$).

Denote the number $k+b(1)+\cdots+ b(k)$ of cells in $D_{k,b}$ by
$N=N_{k,b}$. By a \emph{column-monotone filling} of $D$ we mean a bijective
map $t:D\to [N]$ such that for each $i\in[k]$, there holds
$i=t(i,0)<t(i,1)<\cdots< t(i,b(i))$. We refer to $T=(D,t)$ as a \emph{tableau}.
An example of tableau with $k=3$, $b(1)=4$ and $b(2)=b(3)=2$ is illustrated
in the picture.

\vspace{-0cm}
\begin{center}
\includegraphics[scale=0.7]{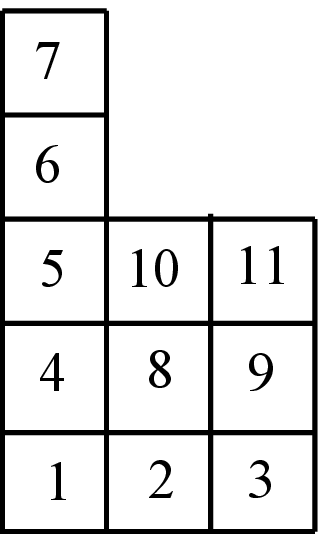}
\end{center}
\vspace{-0cm}

%We say that $(i,j)\in D$ is \emph{smaller} than $(i',j')\in D$ relative to the filling $t$ if $t(i,j)<t(i',j')$.
Given a tableau $T=(D_{k,b},t)$ and a \emph{quota} $q\in\Zset_{>0}$, we now
construct the choice function $C$ on $\Bscr=\Bscr_k,b$ by the following rule (due to V.~Danilov):
  \begin{numitem1} \label{eq:CF-T}
for $z\in\Bscr$, let $A$ consist of $k+\min(|z|,\, q)$ smaller elements of the lower
set $\Lambda_z=\Lambda_{\sigma(z)}$, relative to the filling
$t$; then $C(z)$ corresponds to the section of $D$ having $A$ as the lower set,
i.e. $\Lambda_{C(z)}=\Lambda_{\sigma(C(z))}=A$ (taking into account that
$(i,0)\in A$ for all $i\in [k]$).
  \end{numitem1}

It follows that $C(z)=z$ if $|z|\le q$, and $|C(z)|=q$ if $|z|>q$. Also the
column-monotonicity of $t$ implies that $\Lambda_{C(z)}\subseteq \Lambda_z$,
whence $C(z)\le z$.
 \begin{lemma} \label{lm:C-A1A2}
$C$ satisfies axioms (A1) and (A2).
  \end{lemma}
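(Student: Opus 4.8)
The plan is to phrase everything in terms of lower sets and the single linear order on cells furnished by the column-monotone filling $t$. Write $m(z):=k+\min(|z|,q)$, so that, by definition, $\Lambda_{C(z)}$ is the set of the $m(z)$ cells of $\Lambda_z$ with smallest $t$-value, $|\Lambda_{C(z)}|=m(z)$, and (as already noted after~\refeq{CF-T}) $C(z)=z$ when $|z|\le q$, while $|C(z)|=q$ when $|z|>q$. I would first record two elementary facts. First: for $y,y'\in\Bscr$ one has $y\le y'$ iff $\Lambda_y\subseteq\Lambda_{y'}$, and a cell $(i,j)$ lies in $\Lambda_{C(z)}\cap\Lambda_{z'}$ iff $j\le C(z)(i)\wedge z'(i)$; hence the conclusion $C(z)\wedge z'\le C(z')$ of (A2) is literally the inclusion $\Lambda_{C(z)}\cap\Lambda_{z'}\subseteq\Lambda_{C(z')}$. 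Second: if $S\subseteq S''\subseteq S'$ are sets of cells and $S$ consists of the $|S|$ many $t$-smallest cells of $S'$, then $S$ consists of the $|S|$ many $t$-smallest cells of $S''$ as well, because every cell of $S''\setminus S\subseteq S'\setminus S$ has $t$-value larger than every cell of $S$.

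For (A1) I would split on the size of $z$. If $|z|\le q$, then $C(z)=z$, and $z\ge z'\ge z$ forces $z'=z$. If $|z|>q$, then $|C(z)|=q$, and $z'\ge C(z)$ gives $|z'|\ge q$; when $|z'|=q$, the componentwise inequality $C(z)\le z'$ between functions of equal total size forces $z'=C(z)$, so $C(z')=z'=C(z)$; when $|z'|>q$ we have $m(z)=m(z')=k+q$, and from $z'\ge C(z)$ and $z'\le z$ we get $\Lambda_{C(z)}\subseteq\Lambda_{z'}\subseteq\Lambda_z$, so by the second fact $\Lambda_{C(z)}$ is also the set of $k+q$ smallest cells of $\Lambda_{z'}$, i.e. $\Lambda_{C(z')}=\Lambda_{C(z)}$ and hence $C(z')=C(z)$.

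For (A2), assuming $z\ge z'$, the goal by the first fact is $\Lambda_{C(z)}\cap\Lambda_{z'}\subseteq\Lambda_{C(z')}$. If $|z|\le q$, then also $|z'|\le q$, so $C(z)=z$, $C(z')=z'$ and the inclusion reads $\Lambda_{z'}\subseteq\Lambda_{z'}$. If $|z|>q$ but $|z'|\le q$, then $C(z')=z'$ and the inclusion is again immediate. The only genuine case is $|z|>q$ and $|z'|>q$, where $m(z)=m(z')=k+q$: a cell $(i,j)\in\Lambda_{C(z)}\cap\Lambda_{z'}$ has at most $k+q$ cells of $\Lambda_z$ with $t$-value $\le t(i,j)$, hence, since $\Lambda_{z'}\subseteq\Lambda_z$, at most $k+q$ such cells of $\Lambda_{z'}$, so $(i,j)$ is among the $k+q=m(z')$ smallest cells of $\Lambda_{z'}$, i.e. $(i,j)\in\Lambda_{C(z')}$.

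There is no genuinely hard step here; the one point worth isolating is the observation that in every nontrivial configuration $m(z)=m(z')$ (both equal $k+q$), which collapses both axioms to the trivial statement that ``taking the $p$ smallest elements'' commutes with passing to a nested subset still containing those $p$ elements. The only place needing a line of care is an implicit well-definedness fact used throughout, namely that the $m(z)$ many $t$-smallest cells of $\Lambda_z$ really do form the lower set of a section: this follows from column-monotonicity of $t$ (if such a cell is $(i,j)$ and $j'<j$, then $t(i,j')<t(i,j)$, so $(i,j')$ is among them too) together with $t(i,0)=i\le k\le m(z)$.
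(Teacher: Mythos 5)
Your proposal is correct and follows essentially the same route as the paper: translate everything into the total order on cells given by $t$, reduce both axioms to statements about the $(k+q)$ $t$-smallest cells of nested lower sets, and observe that in the only nonvacuous cases $m(z)=m(z')=k+q$. You are somewhat more explicit than the paper about the trivial sub-cases ($|z|\le q$ in (A1), $|z'|\le q$ in (A2)) and about the well-definedness point that the $m(z)$ $t$-smallest cells of $\Lambda_z$ do form the lower set of a section, both of which the paper leaves implicit; these are sound additions, not deviations.
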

  \begin{proof}
To check (A1), consider $z,z'\in\Bscr$ with $z\ge z'\ge C(z)$ and assume that
$|z|>q$. Then $\Lambda_{z'}\supseteq \Lambda_{C(z)}$, both $\Lambda_{C(z)}$ and $\Lambda_{C(z')}$ have the same size $k+q$, and $t(i,j)>t(i',j')$ holds for all
$(i,j)\in \Lambda_{z'}-\Lambda_{C(z)}$ and $(i',j')\in \Lambda_{C(z)}$. It
follows that $\Lambda_{C(z')}=\Lambda_{C(z)}$, implying $C(z')=C(z)$, as
required.

To check (A2), consider $z,z'\in\Bscr$ with $z>z'$. One may assume that
$|z'|>q$. The desired inequality $C(z)\wedge z'\le C(z')$ is equivalent to the
inclusion $\Lambda_{C(z)}\cap \Lambda_{z'}\subseteq \Lambda_{C(z')}$. In turn,
the latter inclusion follows from the facts that $t(i,j)>t(i',j')$ for all
$(i,j)\in\Lambda_{z'}-\Lambda_{C(z)}$ and $(i',j')\in \Lambda_{C(z)}\cap
\Lambda_{z'}$ (since $(i,j)\in \Lambda_z-\Lambda_{C(z)}$ and $(i',j')\in\Lambda_{C(z)}$), and that
$|\Lambda_{z'}-\Lambda_{C(z)}|>|\Lambda_{C(z)}-\Lambda_{z'}|$ (in view of
$|z'|>q=|C(z)|$).
  \end{proof}

For our purposes, we apply the above construction to the following special
case:
  \begin{numitem1} \label{eq:k=3}
$k=3$, $q$ is even, $b(1)=q$, and $b(2)=b(3)=q/2=:p$,
  \end{numitem1}
and assign the following column-monotone filling on $D=D_{3,b}$:
  \begin{numitem1} \label{eq:t123}
$t(i,0):=i$ for $i=1,2,3$; ~$t(1,j):=3+j$ for $j=1,\ldots,q$;
~$t(2,j'):=2+q+2j'$ and $t(3,j'):=3+q+2j'$ for $j'=1,\ldots,p$.
 \end{numitem1}

The tableau of this sort with $q=4$ is illustrated in the picture above.

Next we consider a special sequence $z^0,z^1,\ldots,z^q$ in
$\Bscr=\Bscr_{3,b}$, where
  \begin{numitem1} \label{eq:z0-zq}
for $i$ even, $z^i(1):=i$ and $z^i(2)=z^i(3):=p-i/2$; and for $i$ odd,
$z^i(1):=i$, $z^i(2):=p-(i-1)/2$ and $z^i(3):=p-(i+1)/2$.
 \end{numitem1}

Then $|z^i|=q$ for $i=0,\ldots,q$. Also from~\refeq{t123} and~\refeq{z0-zq} one
can conclude that
  \begin{numitem1} \label{eq:special}
in the lower set $\Lambda_{z^i}$, the maximum value of $t$ is attained at the
upmost element of 3rd column (namely, at the cell $(3,p-i/2)$) when $i$ is
even, and at the upmost element of 2nd column (namely, at $(2,p-(i-1)/2)$) when
$i$ is odd.
 \end{numitem1}
(Indeed, computing the fillings in the cells of  section $\sigma(z^i)$ in
columns 2 and 3, we have: $t(2,p-i/2)=2+q+2p-i$ and $t(3,p-i/2)=3+q+2p-i$ when
$i$ is even; and $t(2,p-(i-1)/2)=2+q+2p-i+1=2q-i+3$ and
$t(3,p-(i+1)/2)=3+q+2p-i-1=2q-i+2$ when $i$ is odd.)

Property~\refeq{special} leads to the following important fact.
  \begin{corollary} \label{cor:1-interest}
{\rm(i)} The element 1 is interesting under $z^i$ for each $i<q$. More
precisely, $C(z^i+(1,0,0))=z^i+(1,0,-1)=z^{i+1}$ for $i$ even (and $i\ne q$),
and $C(z^i+(1,0,0))=z^i+(1,-1,0)=z^{i+1}$ for $i$ odd.

{\rm(ii)} $z^0\prec z^1\prec\cdots\prec z^q$, where $\prec$ is the preference relation determined by $C$.
%on the assignments $z\in\Bscr$ with $|z|=q$ determined by the choice functions $C$.
  \end{corollary}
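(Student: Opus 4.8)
The plan is to reduce both parts to a direct computation of $C(z^i+(1,0,0))$ from the defining rule~\refeq{CF-T}, and then to deduce the preference chain from part~(i) together with the transitivity of $\prec$ established right after~\refeq{zzp}. Note first that every $z^i$ is acceptable: since $|z^i|=q$, rule~\refeq{CF-T} gives $C(z^i)=z^i$, so the phrases ``interesting under $z^i$'' and ``$z^i\prec z^{i+1}$'' are meaningful.

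To prove part~(i), fix $i<q$ and put $w:=z^i+(1,0,0)$, so $|w|=q+1>q$ and hence $|C(w)|=q$. The lower set $\Lambda_w$ equals $\Lambda_{z^i}\cup\{(1,i+1)\}$, which has $k+q+1=q+4$ cells; by~\refeq{CF-T}, the set $A=\Lambda_{C(w)}$ is obtained from $\Lambda_w$ by deleting the single cell on which $t$ attains its maximum. The key step is to locate this cell. By~\refeq{special}, the maximum of $t$ over $\Lambda_{z^i}$ is attained at $(3,p-i/2)$ if $i$ is even and at $(2,p-(i-1)/2)$ if $i$ is odd, and by~\refeq{t123} this maximal value equals $2q+3-i$ in both cases, whereas $t(1,i+1)=i+4$ by~\refeq{t123}. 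Since $i\le q-1$ implies $i+4<2q+3-i$, the maximal cell of $\Lambda_w$ is the cell that was already maximal in $\Lambda_{z^i}$, and it is the topmost cell of its column in $\Lambda_w$ (the $3$rd column when $i$ is even, the $2$nd when $i$ is odd) because $z^i$ takes its value there. Deleting it from $\Lambda_w$ therefore yields the lower set of the section $z^i+(1,0,-1)$ when $i$ is even and of $z^i+(1,-1,0)$ when $i$ is odd, and a direct comparison with~\refeq{z0-zq} identifies this section with $z^{i+1}$ in each parity. Hence $C(z^i+(1,0,0))=z^{i+1}$, and since $z^{i+1}(1)=i+1>i=z^i(1)$, the element $1$ is interesting under $z^i$ (cf.~\refeq{inter_e}).

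For part~(ii), I would first observe that for every $i<q$ the join $z^i\vee z^{i+1}$ equals $z^i+(1,0,0)$: the two vectors agree in all coordinates except the first, where $z^{i+1}$ exceeds $z^i$ by $1$, while in the coordinate that decreases from $z^i$ to $z^{i+1}$ (the $3$rd when $i$ is even, the $2$nd when $i$ is odd) the value of $z^i$ already dominates, as is immediate from~\refeq{z0-zq}. Therefore $C(z^i\vee z^{i+1})=C(z^i+(1,0,0))=z^{i+1}$ by part~(i), which is precisely $z^i\prec z^{i+1}$. The transitivity of $\prec$ then yields the whole chain $z^0\prec z^1\prec\cdots\prec z^q$.

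The step I expect to be the main obstacle is the bookkeeping in part~(i): one has to make sure that the cell removed from $\Lambda_w$ is genuinely the top of its column, so that the result is again the lower set of a section, and that the parity-dependent arithmetic coming from~\refeq{t123} and~\refeq{z0-zq} consistently reproduces $z^{i+1}$. Everything else is a routine application of rule~\refeq{CF-T} and of the already-established transitivity of $\prec$.
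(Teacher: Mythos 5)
Your proof is correct and follows the same route as the paper: part~(i) is a direct computation of $C(z^i+(1,0,0))$ from rule~\refeq{CF-T}, using~\refeq{special} to locate the unique cell dropped from $\Lambda_{z^i+(1,0,0)}$, and part~(ii) follows because $z^i\vee z^{i+1}=z^i+(1,0,0)$ together with transitivity of~$\prec$. The paper gives this only as a one-line parenthetical remark; your write-up simply fills in the arithmetic ($t(1,i+1)=i+4<2q+3-i$ for $i\le q-1$) that the paper leaves implicit.
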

(Indeed, the equalities in~(i) can be concluded
from~\refeq{z0-zq},~\refeq{special} and the rule~\refeq{CF-T}. And~(ii) follows
from~(i) (since $C(z^i\vee z^{i+1})=z^{i+1}$, in view of $z^i\vee
z^{i+1}=z^i+(1,0,0)$.)

By Corollary~\ref{cor:1-interest}(i), for $i$ even ($i$ odd), the pair $(1,3)$
(resp. $(1,2)$) forms the tandem for the assignment $z^i$. Thus, for the choice
function $C$ on $\Bscr_{3,b}$ determined by the filling $t$ as in~\refeq{t123},
condition~(C) from Sect.~\SEC{addit_prop} is violated for any triple
$z^i,z^{i+1},z^{i+2}$.

Now we use the obtained CF $C$ to construct an example for which the input
graph $G$ is ``small'' but the corresponding poset is ``large'' (depends on the
capacities $b$).

The graph $G=(V=W\sqcup F,E)$ is drawn in the left fragment of  the picture
below.

\vspace{-0.2cm}
\begin{center}
\includegraphics[scale=0.8]{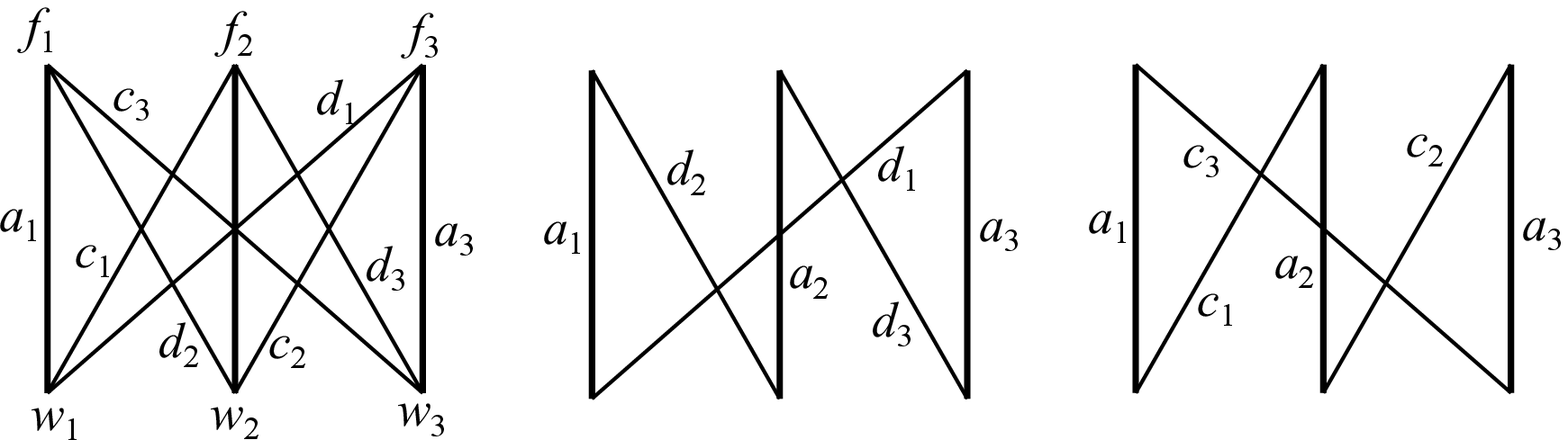}
\end{center}
\vspace{-0.3cm}

Here:
  \begin{numitem1}\label{eq:ex3x3}
\begin{itemize}
 \item[(i)]
$W=\{w_1,w_2,w_3\}$, ~$F=\{f_1,f_2,f_3\}$, and for $i=1,2,3$, the set $E_{w_i}$
consists of three edges $w_if_i$, $w_if_{i+1}$, $w_if_{i-1}$, denoted as
$a_i,c_i,d_i$, respectively (taking indexes modulo 3);
  \item[(ii)]
for each $w_i$, the preferences $>=>_{w_i}$ on $E_{w_i}$ are given by
$c_i>d_i>a_i$;
  \item[(iii)]
for each $f_i$, its incident edges $a_i,c_{i-1},d_{i+1}$ are identified
with copies of elements (column indexes) $1,2,3$ in the diagram $D=D_{3,b}$,
respectively;
  \item[(iv)]
for all $i=1,2,3$, the quotas $q(w_i)$ and $q(f_i)$ are the same even integer
$q>0$; the capacities $b(a_i),\,b(c_i),\,b(d_i)$ are as in~\refeq{k=3} for the
columns 1,2,3 in $D$ (respectively); and the choice function $C_{f_i}$ is a
copy of $C$ as in~\refeq{CF-T} for the tableau $(D,t)$ in question.
   \end{itemize}
   \end{numitem1}

Under these settings, we can construct g-allocations in $G$ as follows. For
each $i=0,1,\ldots,q$, consider the assignment $z^i$ as in~\refeq{z0-zq} and
define
  $$
  x^i(a_r):=z^i(1),\quad x^i(c_r):=z^i(2), \quad x^i(d_r):=z^i(3), \quad r=1,2,3.
  $$

In particular, $x^0(a_r)=0$ and $x^0(c_r)=x^0(d_r)=q/2$, $r=1,2,3$, which is
nothing else than the initial (the best for $W$ and the worst for $F$) stable
g-allocation $\xmin$ in the constructed model with $G,>,C,q$. Moreover, one can see
that: (a) each $x^i$ is stable as well; (b) the sequence
$x^0,x^1,\ldots,x^q=\xmax$ is the \emph{unique} full route $\Tscr$; (c) for $i$ even,
$x^{i+1}$ is obtained from $x_i$ by shifting with unit weight along the
rotation $L$ formed by the sequence of edges $a_1,d_2,a_2,d_3,a_3,d_1$ (where
$L^+=\{a_1,a_2,a_3\}$, illustrated in the middle fragment of the above picture;
and (d) for $i$ odd, $x^{i+1}$ is obtained from $x_i$ by shifting with unit
weight along the rotation $L'$ formed by the sequence of edges
$a_1,c_3,a_3,c_2,a_2,c_1$ (where $L'^+=\{a_1,a_2,a_3\}$), illustrated in the
right fragment of the above picture. (So the family $\hat\Pi(\Tscr)$ defined in Remark~1 (Sect.~\SEC{poset_rot}) consists of $q/2$ pairs $(L,1)$ and $q/2$ pairs $(L',1)$.)

Thus, the corresponding rotational poset is viewed as a chain of $q/2$ copies
of $L$ and $q/2$ copies of $L'$ in which the copies of $L$ and $L'$ alternate
(and do not commute). This chain has $q+1$ ideals, which correspond to the
stable allocations $x^0,\ldots,x^q$. The even parameter $q>0$ can be chosen
arbitrarily, and we can conclude with the following
  \begin{corollary} \label{cor:Cnot}
In model SGAM, if condition~(C) is not imposed, then there is a series of
instances with a fixed graph $G$ such that the posets representing the lattices of stable g-allocations have unboundedly growing sizes.
 \end{corollary}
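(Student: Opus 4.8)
The plan is to exhibit, for every positive even integer $q$, the SGAM instance of~\refeq{ex3x3} in which each firm uses the Danilov choice function~\refeq{CF-T} for the tableau~\refeq{t123}, and to show that its lattice $(\Sscr,\prec_F)$ of stable g-allocations is the chain $x^0\prec x^1\prec\cdots\prec x^q$ of $q+1$ elements built from~\refeq{z0-zq}, whose unique full route uses exactly $q$ rotation occurrences: $q/2$ copies of the cycle $L$ with edge sequence $a_1,d_2,a_2,d_3,a_3,d_1$ and $q/2$ copies of $L'$ with edge sequence $a_1,c_3,a_3,c_2,a_2,c_1$, occurring alternately. Since all of them lie on one and the same full route, the precedence relation $\lessdot$ orders them totally, so the poset of rotation occurrences that represents $(\Sscr,\prec_F)$ in the general SGAM setting (cf.\ Remark~1) is a chain of $q$ elements; equivalently, $(\Sscr,\prec_F)$ being a $(q+1)$-chain forces any poset representing it via closed functions to be a $q$-element chain. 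As $q$ ranges over positive even integers, with $G$ fixed at three workers, three firms and nine edges and $\bmax=q$, these posets have unboundedly growing size, which is the assertion. Throughout, this instance obeys axioms~(A1)--(A4): the first two by Lemma~\ref{lm:C-A1A2}, and~(A4) — hence~(A3) — because $|C(z)|=\min\{|z|,q\}$ for the choice function of~\refeq{CF-T}; whereas condition~(C) fails on every triple $z^i,z^{i+1},z^{i+2}$, as recorded right after Corollary~\ref{cor:1-interest}.

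The steps are these. First I would identify $x^0$ with $\xmin$ by running the Alkan--Gale iteration~\refeq{Bi+1} (Proposition~\ref{pr:Xp}) from $b^0=b$: stage~1 applies $C_{w_r}$ to $b_{w_r}$, which by~\refeq{C_w} fills the two most preferred edges $c_r,d_r$ to their common capacity $p$ (total $2p=q=q(w_r)$) and leaves $a_r$ at $0$, i.e.\ returns exactly $x^0_{w_r}$; stage~2 changes nothing, since $|z^0|=q=q(f_r)$ gives $C_{f_r}(z^0)=z^0$. Hence $y^0=x^0$, the iteration halts, and $\xmin=x^0$. Second, I would prove by induction on $i$ that each $x^i$ is stable and that its active graph $\Gamma(x^i)$ is the single $6$-cycle $L$ for $i$ even ($i<q$) and $L'$ for $i$ odd ($i<q$). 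For $i\ge 1$ the edge $a_r$ is least preferred at $w_r$ and carries positive value, so $\ell_{w_r}(x^i)=a_r$ is the only candidate $W$-admissible edge there; for $i=0$ the candidates are $d_r$ and $a_r$, but $d_r$ is saturated, so again only $a_r$ survives. By Corollary~\ref{cor:1-interest}(i), $a_r$ (column~$1$ for $f_r$) is interesting for $f_r$ under $x^i$, with tandem $(a_r,d_{r+1})$ for $i$ even and $(a_r,c_{r-1})$ for $i$ odd, and the Cleaning procedure removes nothing because every $w_r$ thereby acquires an entering $F$-admissible edge; hence $\Gamma(x^i)$ is the claimed single cycle and $x^{i+1}=x^i+\chi^{L^+}-\chi^{L^-}$ (respectively with $L'$) is stable by Proposition~\ref{pr:xxp}. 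Third, the maximal feasible weight at each step is $1$: a direct computation with~\refeq{CF-T} shows that whenever $z^i+(2,0,0)\le b$ one has $C_{f_r}(z^i+(2,0,0))=z^i+(2,-1,-1)$, which is not the value demanded by~\refeq{feas_shift}(c) for weight~$2$, while otherwise weight~$2$ already violates the capacity bound~\refeq{feas_shift}(b).

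Combining these, at each $x^i$ with $i<q$ there is a unique applicable rotation, used with weight exactly $1$, and $\Gamma(x^q)=\emptyset$ (its only candidate $a_r$ is now saturated), so by Proposition~\ref{pr:xpy} a stable g-allocation with empty active graph admits no strictly larger stable g-allocation, whence $x^q=\xmax$. Therefore the full route from $\xmin$ to $\xmax$ is unique and visits precisely $x^0,\dots,x^q$; so $(\Sscr,\prec_F)$ is the $(q+1)$-element chain, the rotation occurrences along the route are pairwise $\lessdot$-comparable and form a chain of $q$ elements, and since $q$ can be any positive even integer while $G$ stays fixed, the representing posets have unboundedly growing size ($\Theta(\bmax)$), proving the corollary.

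I expect the inductive second step to be the main obstacle: one must verify that for every $i$ the active graph $\Gamma(x^i)$ is exactly the predicted single $6$-cycle and contains no further $W$-admissible edge or cycle, which is what makes the full route genuinely unique and of length $q$. This rests entirely on the interest-and-tandem computation of Corollary~\ref{cor:1-interest} together with the elementary remark that each $a_r$, being least preferred at $w_r$, is the sole $W$-admissible candidate there once it carries a positive value (and is kept from that role at $x^0$ only by the saturation of $d_r$). The remaining ingredients — the identification $\xmin=x^0$, the weight-$1$ check, and counting ideals of a chain — are routine.
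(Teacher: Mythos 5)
Your proposal is correct and takes essentially the same approach as the paper's Appendix: it exhibits the very same fixed $3\times 3$ graph with Danilov's tableau choice functions, identifies $x^0=\xmin$ and $x^q=\xmax$, shows the unique full route alternates the two $6$-cycles $L,L'$ each with weight~$1$, and concludes the representing poset is a chain of $q$ elements. The only difference is that you spell out a few verifications the paper leaves to the reader (the Alkan--Gale iteration stopping at $x^0$, the active-graph check at each $x^i$, and the weight-$2$ failure via the $(2,-1,-1)$ computation), which is sound and matches the paper's assertions.
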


\noindent\textbf{Remark 3.} 
We know that in a general case of SGAM the poset giving a representation of stable g-allocations (in terms of closed functions on the poset) may contain multiple rotations $L$, even multiple pairs $(L,\tau)$, as it may be so for non-excessive routes (see Remark~1 in Sect.~\SEC{poset_rot}). (An example of such a behavior, with the poset formed by two rotations repeated $\bmax/2$ times each, was demonstrated above.) This poset can be explicitly constructed as follows, assuming that $\xmin$ is available.

We first form a full route to obtain the family $\hat\Pi$ of corresponding pairs $(L,\tau)$ (which is invariant, see~\refeq{hatPi}). Let $\Dscr$ be the set of \emph{different} rotations in $\hat\Pi$, and for $L\in\Dscr$, let $\hat\Pi_L$ be the set of pairs containing $L$ in $\hat\Pi$. 

For each $L\in\Dscr$, we construct a non-excessive route $\Tscr_L$ by acting in spirit of the method in Sect.~\SSEC{graph_H}. The procedure consists of $|\hat\Pi_L|=:k_L$ stages. At 1st stage, starting with $\xmin$, we form, step by step, a non-excessive route by using merely rotations different from $L$. As a result, we obtain $x\in\Sscr$ such that the set $\Pi(x)$ of admissible rotations along with their maximal weights for $x$  consists of a single pair $(L,\tau)$ (for if $\Pi(x)$ contained two or more pairs from $\hat\Pi_L$, they should be merged in a non-excessive route). This $x$ is just the beginning g-allocation at 2nd stage, which is performed in a similar way. And so on until we reach $\xmax$, obtaining $\Tscr_L$. Then we number the pairs in $\hat\Pi_L$ as $\pi^{L^1},\ldots,\pi^{L^{k_L}}$ according to their occurrences in $\Tscr_L$; clearly $\pi^{L^1}\lessdot\cdots\lessdot\pi^{L^{k_L}}$.

Now to establish the immediately preceding relations between pairs with different $L,L'\in\Dscr$, we again examine the obtained routes $\Tscr_L$. More precisely, for $\pi^{L^i}$, let $x$ be the g-allocation in the beginning of $(i+1)$-th stage of constructing $\Tscr_L$. Then there are exactly $|\Pi(x)|$ pairs in $\hat\Pi$ that immediately succeed $\pi^{L^i}$, and one can see that these are the pairs $\pi_j^{L'}$ such that $L'\in\Lscr(x)$, and $j$ is defined so that: the subroute of $\Tscr_L$ from $x$ to $\xmax$ uses the pairs $\pi_j^{L'},\ldots,\pi_{k_L'}^{L'}$ from $\hat\Pi_{L'}$ and only them.

Computing such pairs $(\pi_i^L,\pi^{L'}_j)$ for all $L\in\Dscr$, we obtain the generating graph (Hasse diagram) of the required poset. Since $|\hat\Pi|<\bmax|E|^2$ (see Remark~1), the running time of the above method can be roughly estimated as $(\bmax)^2$ times a polynomial in $|E|$.

\end{document}